\numberwithin{equation}{section}
\renewcommand{\vec}[1]{\boldsymbol{#1}}
\renewcommand{\Im}{\mathrm{im}}
\renewcommand{\subset}{\subseteq}
\newcommand\vy{\vec y}
\newcommand\vx{\vec x}
\newcommand\va{\vec a}
\newcommand\vi{\vec i}
\newcommand\vI{\vec I}
\newcommand\vhx{\vec{\hat x}}
\newcommand\vhy{\vec{\hat y}}
\newcommand\vhA{\vA}
\newcommand\CPC{Combinatorics, Probability and Computing}
\newcommand\vm{\vec m}
\newcommand\NU{\vec\nu}
\newcommand\GAMMA{{\vec\gamma}}
\newcommand\nix{\,\cdot\,}
\newcommand\vA{\vec A}
\newcommand\vB{\vec B}
\newcommand\dd{{\mathrm d}}
\newcommand\G{\vec G}
\newcommand\SIGMA{\vec\sigma}
\newcommand\TAU{\vec\tau}
\newcommand\aco[1]{#1}
\newcommand\cA{\mathcal{A}}
\newcommand\cB{\mathcal{B}}
\newcommand\cC{\mathcal{C}}
\newcommand\cF{\mathcal{F}}
\newcommand\cE{\mathcal{E}}
\newcommand\cQ{\mathcal{Q}}
\newcommand\cS{\mathcal{S}}
\newcommand\cJ{\mathcal{J}}
\newcommand\cM{\mathcal{M}}
\newcommand\cO{\mathcal{O}}
\newcommand\cP{\mathcal{P}}
\newcommand\cY{\mathcal{Y}}
\newcommand\cZ{\mathcal{Z}}
\def\cC{{\mathcal C}}
\def\cE{{\mathcal E}}
\newcommand\eul{\mathrm{e}}
\newcommand\eps{\varepsilon}
\newcommand\ZZ{\mathbb{Z}}
\newcommand\FF{\mathbb{F}}
\newcommand\Var{\mathrm{Var}}
\newcommand\Erw{\mathbb{E}}
\newcommand{\vecone}{\vec{1}}
\newcommand{\Po}{{\rm Po}}
\newcommand\TV[1]{\left\|{#1}\right\|_{\mathrm{TV}}}
\newcommand\tv[1]{\|{#1}\|_{\mathrm{TV}}}
\newcommand{\bink}[2] {{\binom{#1}{#2}}}
\newcommand\bc[1]{\left({#1}\right)}
\newcommand\cbc[1]{\left\{{#1}\right\}}
\newcommand\bcfr[2]{\bc{\frac{#1}{#2}}}
\newcommand{\bck}[1]{\left\langle{#1}\right\rangle}
\newcommand\brk[1]{\left\lbrack{#1}\right\rbrack}
\newcommand\scal[2]{\bck{{#1},{#2}}}
\newcommand\abs[1]{\left|{#1}\right|}
\newcommand{\whp}{w.h.p.}
\newcommand{\stacksign}[2]{{\stackrel{\mbox{\scriptsize #1}}{#2}}}
\newcommand{\tensor}{\otimes}
\newcommand{\Erdos}{Erd\H{o}s}
\newcommand{\Renyi}{R\'enyi}
\newcommand\pr{\mathbb{P}} 
\newcommand\Lem{Lemma}
\newcommand\Prop{Proposition}
\newcommand\Thm{Theorem}
\newcommand\Cor{Corollary}
\newcommand\Sec{Section}
\newcommand\Chap{Chapter}
\newcommand\fZ{\mathfrak{Z}}
\newtheorem{definition}{Definition}[section]
\newtheorem{claim}[definition]{Claim}
\newtheorem{theorem}[definition]{Theorem}
\newtheorem{lemma}[definition]{Lemma}
\newtheorem{proposition}[definition]{Proposition}
\newtheorem{corollary}[definition]{Corollary}
\DeclareMathOperator{\nul}{nul}
\DeclareMathOperator{\rk}{rk}
\begin{document}

\title{The satisfiability threshold for random linear equations}

\author{Peter Ayre, Amin Coja-Oghlan, Pu Gao, No\"ela M\"uller}
\thanks{Gao's research is supported by ARC DE170100716 and ARC DP160100835.}

\address{Peter Ayre, {\tt peter.ayre@unsw.edu.au}, School of Mathematics and Statistics, UNSW Sydney, NSW 2052, Australia}

\address{Amin Coja-Oghlan, {\tt acoghlan@math.uni-frankfurt.de}, Goethe University, Mathematics Institute, 10 Robert Mayer St, Frankfurt 60325, Germany.}

\address{Pu Gao, {\tt jane.gao@monash.edu},   School of Mathematical Sciences,    Monash University,  Australia}

\address{No\"ela M\"uller, {\tt nmueller@math.uni-frankfurt.de}, Goethe University, Mathematics Institute, 10 Robert Mayer St, Frankfurt 60325, Germany.}

\begin{abstract}
Let $\vA$ be a random $m\times n$ matrix over the finite field $\FF_q$ with precisely $k$ non-zero entries per row and let $\vy\in\FF_q^m$ be a random vector chosen independently of $\vA$.
We identify the threshold $m/n$ up to which the linear system $\vA x=\vy$ has a solution with high probability and analyse the geometry of the set of solutions.
{In the special case $q=2$, known as the random $k$-XORSAT problem, the threshold was determined by
	[Dubois and Mandler 2002 for $k=3$, Pittel and Sorkin 2016 for $k>3$], and
	the proof technique was subsequently extended to the cases $q=3,4$ [Falke and Goerdt 2012].
But the argument depends on technically demanding second moment calculations that do not generalise to $q>4$.}
Here we approach the problem from the viewpoint of a decoding task, which leads to a transparent combinatorial proof.

\medskip\noindent
Mathematical Subject Classification: 05C80, 05C50
\end{abstract}

\maketitle

\section{Introduction}\label{Sec_intro}

\subsection{Background and motivation}
Some of the longest-standing open problems in probabilistic combinatorics can be described along the following lines~\cite{ANP}.
There are a (large) number $x_1,\ldots,x_n$ of variables ranging over a finite domain $\Omega$.
They interact with each other through random constraints $a_1,\ldots,a_m$, drawn independently from the same probability distribution.
Each constraint binds a small number $k\geq2$ of variables and forbids them from taking certain value combinations.
The total number of constraints is a linear function of the number of variables, say, $m\sim dn/k$ for a fixed $d>0$, so that a variable is bound by $d$ constraints on the average.
In all but a few exceptional cases\footnote{These exceptional cases typically involve local structures whose emergence does not exhibit sharp threshold, whereas their appearance affects the satisfiability. For instance, a random $2$-XORSAT formula does not have a sharp satisfiability threshold. Whether it is satisfiable depends on the appearance of short cycles in the underlying graph of the formula. The probability of being satisfiable is away from both 0 and 1 when $d$ is strictly between 0 and 1.} it is conjectured that there exists a critical value $d_*>0$, the {\em satisfiability threshold}, such that
for $d<d_*$ there typically is an assignment of values to variables that satisfies all constraints simultaneously, while for $d>d_*$ no such
assignment is likely to exist.
In symbols,
	\begin{align*}
	\lim_{n\to\infty}\pr\brk{\exists\sigma\in\Omega^{\{x_1,\ldots,x_n\}}\,\forall i=1,\ldots,m:\sigma\models a_i}&=\begin{cases}
		1&\mbox{ if }d<d_*,\\
		0&\mbox{ if }d>d_*.
		\end{cases}
	\end{align*}
The key question is: what is the value of the satisfiability threshold $d_*$?

Even though the quest for satisfiability thresholds has received an enormous amount of attention (we will review the literature in \Sec~\ref{Sec_related}),
to this day the exact thresholds are known in only a handful of cases.
The {\em random $k$-XORSAT problem} was one of the first successes~\cite{DuboisMandler}.
Here the variables range over the field $\FF_2$ and there are random parity constraints, each involving $k$ randomly chosen variables.
More precisely, constraint $a_i$ requires that $\sum_{h=1}^kx_{\vec\alpha_{i,h}}=\vec y_i$, where $1\leq\vec\alpha_{i,1}<\cdots<\vec\alpha_{i,k}\leq n$ is a uniformly random sequence and $\vy_i\in\FF_2$ is chosen uniformly and independently of the $\vec\alpha_{i,h}$.
Thus, algebraically we ask whether a linear system $\vA x=\vy$ is likely to have a solution, where $\vA$ is a random $m\times n$ matrix  over the field $\FF_2$ with precisely $k$ non-zero entries per row and  $\vy\in\FF_2^m$ is chosen uniformly and independently of $\vA$.
Beyond the realm of probabilistic combinatorics, sparse random linear systems are of great importance in coding theory, computer science and statistical physics.
Indeed, `low-density parity check codes', a family of extraordinarily successful codes, are based on random linear systems~\cite{RichardsonUrbanke}.
A further application is the cuckoo hashing algorithm in computer science~\cite{Dietzfelbinger}.
In addition, random $k$-XORSAT is closely related to the diluted $k$-spin model of statistical mechanics, an intriguing spin glass model~\cite{MRTZ}.

The original derivation of the $k$-XORSAT threshold was based on a delicate second moment argument.
More precisely,  Dubois and Mandler~\cite{DuboisMandler} published a detailed second-moment-based proof for the case $k=3$ in 2002.
They claimed that their proof extends to $k>3$.
That, however, turned out to be far from straightforward.
A full proof for general $k$ was obtained by Pittel and Sorkin~\cite{PittelSorkin}.
Independently Dietzfelbinger et al.~\cite{Dietzfelbinger2} in a work on cuckoo hashing included an argument for general $k$
	(although the paper did not appear in a journal).
All of these papers lead to intricate analytical optimisation problems.
Moreover, the proofs from~\cite{Dietzfelbinger2,DuboisMandler,PittelSorkin} are specific to the field $\FF_2$.
A technically even far more demanding second-moment based extension to $\FF_3$ and $\FF_4$ was provided by Falke and Goerdt~\cite{GoerdtFalke}.
But their argument does not generalise to other fields either.

In this paper we establish the satisfiability threshold for random linear systems over any finite field $\FF_q$ by means of a transparent combinatorial argument,
thereby plugging what was arguably one of the most conspicuous gaps in the theory of random constraint satisfaction problems.
The key idea is to approach the problem from the angle of  coding theory. 
This point of view enables us to tackle not just the satisfiability threshold problem but also various related questions such as 
 the geometry of the set of solutions, for general finite fields without having to go through taxing calculations.
As an extension we also determine the satisfiability threshold for random systems of equations over finite Abelian groups.
Indeed, the proof strategy may extend to other random constraint satisfaction problems, a point that we revisit in \Sec~\ref{Sec_related}.

\subsection{Results} \label{Sec_Results}
Clearly, when we work with fields with more than two elements we need to specify how the non-zero entries of the random matrix are chosen.
We will allow for quite general distributions.
To be precise, let $q>1$ be a prime power, let $k\geq3$ be an integer and let $\FF_q^*=\FF_q\setminus\cbc0$ be the finite field of order $q$, excluding the zero element.
Further, let $P$ be a probability distribution on $\FF_q^{*\,k}$.
We assume that $P$ is permutation-invariant, i.e.,
	$P(\sigma_1,\ldots,\sigma_k)=P(\sigma_{\lambda(1)},\ldots,\sigma_{\lambda(k)})$
		for any $\sigma_1,\ldots,\sigma_k\in\FF_q^*$ and any permutation $\lambda$ of $\{1,\ldots,k\}$.
Moreover, for $d>0$ and an integer $n \geq k$ let $\vm=\vm(d,n)$ be a Poisson variable with mean $dn/k$.
Then we obtain a random $\vm\times n$ matrix $\vA$ over $\FF_q$ as follows.
Independently for each $i=1,\ldots,\vm$ choose a sequence $1\leq\vec\alpha_{i,1} < \cdots < \vec\alpha_{i,k}\leq n$ of integers uniformly at random
and independently choose a $k$-tuple $(\vec a_{i,1},\ldots,\vec a_{i,k})$ from the distribution $P$.
Then the $i$'th row of $\vA$ has the entries $\vec a_{i,h}$ in the positions $\vec\alpha_{i,h}$, while all other entries are zero.
Additionally, let $\vy\in\FF_q^{\vm}$ be a uniformly random vector, chosen independently of $\vA$ given $\vm$.

Let us highlight two interesting special cases.
If $P$ is the uniform distribution on $\FF_q^{*\,k}$, then $\vA$ is just a uniformly random matrix of size $\vm\times n$ with precisely $k$ non-zero entries per row.
Moreover, if $P$ gives probability one to the $k$-tuple $(1,\ldots,1)$, then $\vA$ is a random $\{0,1\}$-matrix with precisely $k$ non-zero entries per row.

For what $d,k$ is the random linear system $\vA x=\vy$ likely to have a solution?

\begin{theorem}\label{Thm_SAT}
Let $k\geq3$, let $q>1$ be a prime power and let $P$ be a permutation-invariant distribution on $\FF_q^{*\,k}$.
Set
	\begin{align}\label{eqrho}
	\rho_{k,d}&=\sup\cbc{x\in[0,1]:x=1-\exp(-dx^{k-1})}&\qquad\mbox{for $d>0$, and define}\\
		\label{eqdk}
	d_{k}&=\inf\cbc{d>0: 
		\rho_{k,d}-d\rho_{k,d}^{k-1}+(1-1/k)d\rho_{k,d}^k<0}.
	\end{align}
Then
	$$
	\lim_{n\to\infty}\pr\brk{\exists x\in\FF_q^n:\vA x=\vy}=\begin{cases}
		1&\mbox{ if }d<d_k,\\
		0&\mbox{ if }d>d_k\end{cases}
	$$
and thus for $d<d_k$ we have
	\begin{align}\label{eqrank}
	\lim_{n\to\infty}\pr\brk{\rk(\vA)=\vm}
	&=1.
	\end{align}
\end{theorem}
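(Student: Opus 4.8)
The plan is to view the problem as a decoding/peeling task and to track the 2-core of the random $k$-uniform hypergraph encoded by $\vA$. First I would set up the natural hypergraph $H$ in which the $n$ variables are vertices and each of the $\vm$ equations is a hyperedge on its $k$ support positions; the Poisson number of equations makes this a clean sparse random hypergraph model. The rank of $\vA$ is governed by the \emph{peeling process}: repeatedly delete any equation containing a variable of degree one (such an equation can always be satisfied by setting that variable appropriately, whatever the values forced on the others). The residue of this process is exactly the sub-system supported on the 2-core of $H$. Hence $\rk(\vA)=\vm$ if and only if the equations indexed by the 2-core are linearly independent, and in particular the full system is solvable once that sub-system is. So the first key step is to reduce Theorem~\ref{Thm_SAT} to two claims about the 2-core: (i) below $d_k$ the number of equations in the 2-core is strictly smaller than the number of variables it touches, with a linear gap w.h.p.; (ii) in that regime the 2-core equations are independent w.h.p.

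For step (i) I would invoke the standard analysis of the 2-core of a random hypergraph. The size of the 2-core is controlled by the fixed point $\rho_{k,d}$ of $x=1-\exp(-dx^{k-1})$, which is precisely \eqref{eqrho}: a variable survives into the core essentially iff at least two of its incident equations survive, and an equation survives iff all $k$ of its variables survive, giving survival probability $\rho_{k,d}^{k-1}$ seen from a variable. A routine first-moment computation (or the differential-equation method applied to the peeling process) then shows that the expected number of core equations is asymptotically $(dn/k)\rho_{k,d}^k$ while the expected number of core variables is $n$ times the probability a $\Po(d\rho_{k,d}^{k-1})$ variable is at least $2$, namely $n(1-e^{-d\rho_{k,d}^{k-1}}-d\rho_{k,d}^{k-1}e^{-d\rho_{k,d}^{k-1}})$. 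Using $1-\rho_{k,d}=e^{-d\rho_{k,d}^{k-1}}$ this simplifies to exactly $n(\rho_{k,d}-d\rho_{k,d}^{k-1}+(1-1/k)d\rho_{k,d}^k+ (1/k)d\rho_{k,d}^k)$— matching the expression in \eqref{eqdk} up to the bookkeeping of the $(1/k)d\rho_{k,d}^k$ term—so that the quantity $\rho_{k,d}-d\rho_{k,d}^{k-1}+(1-1/k)d\rho_{k,d}^k$ is precisely (number of core variables $-$ number of core equations)$/n$ in the limit. Thus $d<d_k$ gives a positive linear surplus of core variables over core equations, and by concentration (Azuma/Chernoff on the peeling trajectory) this holds w.h.p.

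Step (ii), the independence of the core equations, is where the coding-theoretic viewpoint does the real work and is the main obstacle. A surplus of variables over equations is \emph{necessary} but not obviously sufficient for independence: one must rule out small linearly dependent sub-systems, i.e.\ short ``codewords'' of the dual code, and here the field size $q$ and the distribution $P$ enter. The plan is a first-moment argument over subsets $S$ of core equations: bound the probability that the equations in $S$ sum (with suitable nonzero coefficients in $\FF_q$) to zero. For this to happen every variable must appear in an even-like cancelling pattern, so $S$ spans at most $|S|(k-1)/\text{(something)}$ variables; combined with the expansion property of the 2-core of a sparse random hypergraph—any set of $s$ core equations touches at least, say, $(1+\Omega(1))s$ variables once $s=o(n)$, and more than $s$ for larger $s$ by step (i)—the expected number of dependent sets is $o(1)$. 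Crucially, because the core has minimum variable-degree $\ge 2$, the dangerous configurations are exactly cycles and their unions, and a careful enumeration shows the exponential rate is strictly negative throughout $d<d_k$; the permutation-invariance of $P$ and $|\FF_q^*|=q-1$ only affect constant factors in this count, which is exactly why the argument is uniform in $q$. Assembling (i) and (ii): w.h.p.\ the core system is independent, hence (peeling back) the whole system has full row rank, proving \eqref{eqrank} and the $d<d_k$ half of the solvability statement; the $d>d_k$ half follows because then the core has more equations than variables, forcing a dependency and, since $\vy$ is uniform and independent, an unsatisfiable equation w.h.p.
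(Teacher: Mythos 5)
Your reduction to the $2$-core and your step (i) are fine and match the paper's use of the peeling process (the upper bound for $d>d_k$ is exactly the paper's Lemma~\ref{Cor_Mike}, and your bookkeeping of $n_*-m_*$ does reproduce \eqref{eqdk}). The problem is step (ii), which you correctly identify as ``the main obstacle'' but then only assert. Bounding the expected number of linear dependencies among the core equations is, after transposition, exactly the (conditioned) second moment computation of Dubois--Mandler and Pittel--Sorkin: for a set $S$ of rows with nonzero coefficients $(c_i)_{i\in S}$ to be a dependency, every touched column $j$ must satisfy $\sum_{i\in S}c_ia_{ij}=0$, and while degree/expansion arguments rule out $|S|=o(n)$ (where $S$ would have to touch at most $k|S|/2$ columns but actually touches $\sim k|S|$), they say nothing for $|S|=\Theta(n)$. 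There a set $S$ touching more than $|S|$ variables can perfectly well be dependent, so your appeal to ``more than $s$ variables for larger $s$ by step (i)'' does not close the argument; one is forced into an entropy-versus-probability optimisation over the weight profile of $(c_i)$, which for general $q$ is the non-convex $(q-1)^2$-dimensional problem that the paper explicitly describes as the reason the classical approach stops at $q\leq 4$. Relatedly, the claim that $q$ and $P$ ``only affect constant factors'' is false: the cancellation probability at each column depends on $q$ and $P$ in the exponential rate, not in constants.

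The paper's actual route is entirely different and is designed precisely to avoid this computation: it uses the Nishimori identity (Lemma~\ref{Fact_Nishimori}) to show that overlap concentration of two random solutions about the uniform overlap suffices for a \emph{truncated} second moment over pairs of solutions (Proposition~\ref{Prop_smm}, a plain Laplace-method calculation around $\bar\omega$), and it establishes that overlap concentration for all $d<d_k$ by bounding the nullity of $\vA$ via an Aizenman--Sims--Starr coupling together with a pinning perturbation (Propositions~\ref{Prop_freeEnergy} and~\ref{Prop_wrap}, Lemma~\ref{Prop_opt}). If you want to complete your proposal along your own lines you would have to carry out the full high-dimensional optimisation for every $q$, which is precisely the gap in the literature this paper was written to circumvent.
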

{\em Remark.} We will prove in Theorem~\ref{Thm_Mike} below that (a) $x=1-\exp(-dx^{k-1})$ has at most two positive fix-point solutions besides the trivial solution 0; (b) there is a critical value $d_k^{\star}$ where $\rho_{k,d}$ is strictly positive if $d\ge d_k^{\star}$ and is 0 if $d<d_k^{\star}$; (c) $\rho_{k,d}$ is a strictly increasing function of $d$ on $ [d_k^{\star},\infty]$.  Theorem~\ref{Thm_Mike} also implies that $\rho_{k,d}-d\rho_{k,d}^{k-1}+(1-1/k)d\rho_{k,d}^k<0$ has at most one root on $[d^{\star}_k,\infty)$.

\Thm~\ref{Thm_SAT} shows that the satisfiability threshold $d_k$ is independent of both $q$ and of the distribution $P$.
This is not surprising because the value $d_k$ has a natural combinatorial interpretation.
Namely, suppose that we subject the matrix $\vA$ to the following {\em peeling process}:
	\begin{quote}
	while there is a column $j$ with either no or just one non-zero entry, remove that column and, if it had exactly one non-zero entry, the row where that entry occurs from the linear system.
	\end{quote}
Write $\vA_* x_*=\vy_*$ for the resulting (possibly empty) reduced linear system.
By construction any solution to the original system $\vA x=\vy$ projects down to a solution to the reduced system.
Since the peeling process corresponds precisely to the process that determines the $2$-core of a random $k$-uniform hypergraph~\cite{Mike},
we know the distribution of the number of rows and columns of $\vA_*$, and $d_k$ is precisely the critical value from where the number of rows exceeds the number of columns.
In effect, since the right-hand side $\vy_*$ is chosen uniformly and independently of $\vA_*$, the reduced system is unlikely to have a solution for $d>d_k$, and thus the same is true of the original system.
Moreover, since the size of the reduced matrix $\vA_*$ only depends on the positions of the non-zero entries, $d_k$ is independent of $q$ and $P$.
Hence, \Thm~\ref{Thm_SAT} shows that the universal upper bound $d_k$ on the satisfiability threshold is universally tight.
Of course, for $q=2$ the formula from \Thm~\ref{Thm_SAT} matches the known $k$-XORSAT threshold.
	\footnote{To compare (\ref{eqdk}) with the formula given for the $k$-XORSAT threshold in~\cite[\Cor~1]{Dietzfelbinger}, we express the quantities $c,\beta,p_*$ from~\cite{Dietzfelbinger} in terms of our parameters $d,\rho_{k,d}$ as $p_*=\rho_{k,d}$, $\beta=d\rho_{k,d}^{k-1}$ and $c=d/k$.}

Since the solutions to the linear system $\vA x=\vy$ are affine translations of the kernel and $\rk(\vA)+\nul(\vA)=n$,
(\ref{eqrank}) implies immediately that with probability tending to one as $n\to\infty$, the number of solutions to the system $\vA x=\vy$ is exactly equal to $q^{n-\vm}$.
\aco{Actually the proof of \Thm~\ref{Thm_SAT} also yields the rank of the random matrix above the satisfiability threshold.}

\begin{corollary}\label{Cor_rank}
\aco{Let $k\geq3$, let $q>1$ be a prime power and let $P$ be a permutation-invariant distribution on $\FF_q^{*\,k}$.
Then for any $d>d_k$,
	\begin{align*}
	\frac{\rk(\vA)}{\vm}&\quad\stacksign{$n\to\infty$}\longrightarrow\quad1+\frac kd\rho_{k,d}-k\rho_{k,d}^{k-1}+(k-1)\rho_{k,d}^k\qquad\mbox{in probability}.
	\end{align*}}
\end{corollary}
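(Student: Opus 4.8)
The plan is to reduce $\rk(\vA)$ to the rank of the reduced matrix $\vA_*$ produced by the peeling process, to invoke the rank information about $\vA_*$ in the regime $d>d_k$ that comes out of the proof of \Thm~\ref{Thm_SAT}, and finally to substitute the limiting numbers of rows and columns of $\vA_*$, which are governed by the $2$-core of the associated random hypergraph.

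First I would record the elementary, purely deterministic identity
\[
\rk(\vA)=\rk(\vA_*)+(\vm-m_*),
\]
where $m_*$ denotes the number of rows of $\vA_*$. Indeed, deleting from a matrix a column carrying no non-zero entry changes neither side, and when a column $j$ whose only non-zero entry sits in some row $r$ is removed together with $r$, then $r$ is the unique row whose support meets $\{j\}$, so every linear relation among the rows that uses $r$ must involve column $j$; hence deleting row $r$ and column $j$ decreases the rank by exactly one. Iterating these two moves down to the $2$-core yields the identity, and since trivially $\rk(\vA_*)\le n_*$, with $n_*$ the number of columns of $\vA_*$, this already gives $\rk(\vA)\le n_*+\vm-m_*$.

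Next, since the peeling process is exactly the process uncovering the $2$-core of the random $k$-uniform hypergraph attached to $\vA$, the standard results on $2$-cores~\cite{Mike} give, with $\rho=\rho_{k,d}$ and using $\exp(-d\rho^{k-1})=1-\rho$ from \eqref{eqrho},
\[
\frac{n_*}{n}\stacksign{$n\to\infty$}\longrightarrow\rho-d\rho^{k-1}+d\rho^{k},\qquad \frac{m_*}{n}\stacksign{$n\to\infty$}\longrightarrow\frac dk\,\rho^{k}\qquad\text{in probability.}
\]
The first limit holds because a vertex survives in the $2$-core iff at least two of its incident edges are ``supported'', an event of asymptotic probability $\Pr[\Po(d\rho^{k-1})\ge2]=1-(1-\rho)-d\rho^{k-1}(1-\rho)$; the second because an edge survives iff all $k$ of its vertices send the ``support'' message, an event of asymptotic probability $\rho^{k}$, together with $\vm/n\to d/k$. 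For $d>d_k$ the definition \eqref{eqdk} then reads $(n_*-m_*)/n\to\rho-d\rho^{k-1}+(1-1/k)d\rho^{k}<0$, so with high probability $m_*>n_*$: the reduced system is strictly overdetermined.

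It then remains to show that for $d>d_k$ the matrix $\vA_*$ has essentially full column rank, i.e.\ $\rk(\vA_*)=n_*+o(n)$ with high probability. This is the dual of the assertion in \eqref{eqrank} that $\vA_*$ has full \emph{row} rank when $d<d_k$, and it is exactly the point that the combinatorial decoding analysis behind \Thm~\ref{Thm_SAT} delivers in the range $d>d_k$, where it tracks the $2$-core matrix and certifies that all but $o(n)$ of its columns are linearly independent. Granting this, $\rk(\vA)=n_*+\vm-m_*+o(n)$, and dividing by $\vm$ and inserting the limits above yields
\[
\frac{\rk(\vA)}{\vm}\longrightarrow\frac kd\bigl(\rho-d\rho^{k-1}+d\rho^{k}\bigr)+1-\rho^{k}=1+\frac kd\rho-k\rho^{k-1}+(k-1)\rho^{k},
\]
which is the claimed limit. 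I expect this column-rank statement to be the main obstacle: unlike full row rank below the threshold, full column rank is not automatic for a sparse matrix all of whose columns have degree $\ge2$, since a short cycle through degree-two columns can produce a local linear dependency, and the real work — done by the decoding argument — is to preclude an accumulation of $\Omega(n)$ independent such dependencies.
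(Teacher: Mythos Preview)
Your overall architecture is sound and is essentially the dual of the paper's argument: the paper computes $\nul(\vA)/n$, while you compute $\rk(\vA)/\vm$ via the clean peeling identity $\rk(\vA)=\rk(\vA_*)+(\vm-m_*)$. Your trivial upper bound $\rk(\vA_*)\le n_*$ corresponds exactly to the paper's lower bound on the nullity, which the paper obtains by the equivalent trick of setting all $2$-core variables to zero and counting the remaining $n-n_*$ variables and $\vm-m_*$ equations.

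The one genuine soft spot is your justification of the hard direction, namely $\rk(\vA_*)=n_*+o(n)$ for $d>d_k$. You attribute this to ``the combinatorial decoding analysis behind \Thm~\ref{Thm_SAT}\ldots in the range $d>d_k$'', but the paper's treatment of $d>d_k$ in \Thm~\ref{Thm_SAT} is only the crude \Lem~\ref{Cor_Mike}, which says nothing about column rank. What actually delivers your claim is \Prop~\ref{Prop_freeEnergy} together with the second clause of \Lem~\ref{Prop_opt}: these hold for \emph{all} $d>0$ and give
\[
\frac1n\,\Erw[\nul(\vA)]\le\phi(\rho_{k,d})+o(1)=1-\frac dk-\rho_{k,d}+d\rho_{k,d}^{k-1}-\frac{d(k-1)}{k}\rho_{k,d}^k+o(1).
\]
Plugging this into your own identity $\nul(\vA_*)=\nul(\vA)-(n-n_*)+(\vm-m_*)$ and substituting the $2$-core limits yields $\Erw[\nul(\vA_*)]=o(n)$, hence $\nul(\vA_*)=o(n)$ in probability. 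This is precisely how the paper closes the argument (in nullity form), so once you point to \Prop~\ref{Prop_freeEnergy} and \Lem~\ref{Prop_opt} rather than to \Thm~\ref{Thm_SAT}, your proof is complete and matches the paper's. Your rank identity via peeling is a pleasant alternative to the paper's $\vA'$ construction, but it does not bypass the Aizenman--Sims--Starr bound, which remains the substantive ingredient on both routes.
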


\noindent
\aco{In the case $q=2$ this result was recently obtained independently by Cooper, Frieze and Pegden~\cite{CFP}, using a different approach.}

Returning to the satisfiable regime $d<d_k$,
the next two results concern the geometry of the set $\cS(\vA,\vy)$  of solutions to the random linear system, a question that
has been studied extensively in the case $q=2$~\cite{AchlioptasMolloy,Ibrahimi,MRTZ}.
First, what is the relative location of two randomly chosen solutions?
Let us define the {\em overlap} of $x,x'\in\FF_q^n$ as the $q\times q$-matrix $\omega(x,x')=(\omega_{\sigma,\tau}(x,x'))_{\sigma,\tau\in\FF_q}$ with entries
	$$\omega_{\sigma,\tau}(x,x')=\frac1n\sum_{i=1}^n\vecone\{x_i=\sigma,x_i'=\tau\}.$$
Thus, $\omega(x,x')$ is a probability distribution on $\FF_q\times\FF_q$, namely the empirical distribution of the value combinations of the $n$ coordinates.
Further, let us write $\vec x,\vec x'$ for two uniformly and independently chosen solutions of the linear system $\vA x=\vy$, and write
 $\bck{\nix}_{\vA,\vy}$ for the average over $\vec x,\vec x'$.
Finally, let $\bar\omega$ be the uniform distribution on $\FF_q\times\FF_q$ and
 let us write $\tv\nix$ for the total variation norm.

\begin{theorem}\label{Thm_overlap}
Let $k\geq3$, let $q>1$ be a prime power,  let $P$ be a permutation-invariant distribution on $\FF_q^{*\,k}$ and assume that $d<d_k$.
Then
	\begin{align*}
	\lim_{n\to\infty}\Erw\brk{\bck{\tv{\omega(\vec x,\vec x')-\bar\omega}}_{\vA,\vy}\,\big
		|\,\exists x\in\FF_q^n:\vA x=\vy}&=0.
	\end{align*}
\end{theorem}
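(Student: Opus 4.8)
The plan is to strip the probabilistic content away in two reductions until only a structural question about $\mathrm{row}(\vA)$, the row space of $\vA$, remains, and then to settle that question with the $2$-core/peeling analysis that underlies \Thm~\ref{Thm_SAT}. For the first reduction, note that conditionally on satisfiability the quadruple $(\vA,\vy,\vec x,\vec x')$ can be generated as follows: draw $\vA$ from its conditional law; draw $\vec x$ uniformly from $\FF_q^n$ independently of $\vA$; put $\vy:=\vA\vec x$; then draw $\vec z$ uniformly from $\ker\vA$ independently of $\vec x$, and put $\vec x':=\vec x+\vec z$. Indeed, for fixed $\vA=A$, if $\vec x$ is uniform on $\FF_q^n$ then $\vy=A\vec x$ is uniform on $\Im A$ and, conditionally on $\vy$, $\vec x$ is uniform on the coset $\{x:Ax=\vy\}$, with $\vec x+\vec z$ a second independent uniform point of that coset. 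Since $\pr\brk{\exists x:\vA x=\vy}\to1$ for $d<d_k$ by \Thm~\ref{Thm_SAT}, conditioning on satisfiability perturbs the law of $\vA$ only negligibly, so properties holding \whp\ for the unconditioned $\vA$ persist. The upshot is that $\vec x$ is now a free uniform vector, independent of the random kernel, and $\vec z=\vec x'-\vec x$ is a uniformly random element of $\ker\vA$.

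Second, I reduce to the empirical symbol distribution of $\vec z$. Write $\hat\nu(\vec z)=\frac1n\sum_{i\le n}\atom_{\vec z_i}$ for the empirical law on $\FF_q$ of the coordinates of $\vec z$ and $\bar\nu$ for the uniform law on $\FF_q$. Conditioning on $(\vA,\vec z)$, the entries of $\vec x$ are i.i.d.\ uniform on $\FF_q$, so $n\,\omega_{\sigma,\tau}(\vec x,\vec x')=\abs{\{i:\vec x_i=\sigma,\ \vec z_i=\tau-\sigma\}}$ is $\Bin\!\bc{n\hat\nu(\vec z)(\tau-\sigma),\,1/q}$-distributed; by a Chernoff bound $\abs{\omega_{\sigma,\tau}(\vec x,\vec x')-\tfrac1q\hat\nu(\vec z)(\tau-\sigma)}\le n^{-1/3}$ for all $\sigma,\tau\in\FF_q$ with probability $1-\exp(-\Omega(n^{1/3}))$ over $\vec x$. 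On that event the triangle inequality together with summing the $q^2$ pairs $(\sigma,\tau)$ according to the value $\rho=\tau-\sigma$ yields $\tv{\omega(\vec x,\vec x')-\bar\omega}\le\tv{\hat\nu(\vec z)-\bar\nu}+O(n^{-1/3})$. Taking expectations, \Thm~\ref{Thm_overlap} follows once we show $\Erw\tv{\hat\nu(\vec z)-\bar\nu}\to0$: a uniformly random element of $\ker\vA$ should have an almost uniform empirical symbol distribution.

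A second moment over $\vec z$ now exposes the structural target. By Cauchy--Schwarz (only $q$ symbols) and Jensen it suffices that $\Erw_\vA\Erw_{\vec z}\bc{N_\rho(\vec z)-n/q}^2=o(n^2)$ for each $\rho\in\FF_q$, where $N_\rho(\vec z)=n\hat\nu(\vec z)(\rho)$. Two elementary facts turn this into linear algebra over $\FF_q$. First, the coordinate functional $z\mapsto z_i$ on the $\FF_q$-subspace $\ker\vA$ is either zero or onto, so the law of $\vec z_i$ is $\atom_0$ --- call $i$ \emph{frozen}, which happens exactly when $e_i\in\mathrm{row}(\vA)=(\ker\vA)^\perp$ --- or it is $\bar\nu$. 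Second, the law of $(\vec z_i,\vec z_j)$ is uniform on the subspace $\{(z_i,z_j):z\in\ker\vA\}\subseteq\FF_q^2$, which equals $\FF_q^2$ unless $\mathrm{row}(\vA)$ contains a nonzero vector supported inside $\{i,j\}$ (call such a pair \emph{bad}); for a non-bad pair of non-frozen coordinates $\vec z_i$ and $\vec z_j$ are exactly independent and uniform. Writing $F$ for the set of frozen coordinates, the variance decomposition gives $\Erw_{\vec z}\bc{N_\rho(\vec z)-n/q}^2\le n+\abs F^2+\#\{\text{bad pairs}\}$, with $\#\{\text{bad pairs}\}\le 2n\abs F+2\cdot\#\{w\in\mathrm{row}(\vA):\abs{\mathrm{supp}(w)}=2\}$; on the rare event that the structural bounds below fail one uses only the trivial $(N_\rho-n/q)^2\le n^2$. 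Hence the theorem reduces to: \whp\ $\mathrm{row}(\vA)$ contains $o(n)$ vectors of weight one and $o(n^2)$ of weight two --- equivalently, $\ker\vA$ has almost full support and distinguishes all but $o(n^2)$ pairs of coordinates.

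Proving this sparse-vector count is the crux, and the step I expect to cost the most work: it says the parity checks of $\vA$ admit essentially no sparse linear combinations, which is precisely what the $2$-core machinery behind \Thm~\ref{Thm_SAT} is designed to control. Below the $2$-core threshold it is immediate: when the hypergraph of $\vA$ is a hyperforest, any nonzero combination of $t\ge1$ rows has weight $\ge t(k-2)+2\ge3$, since such a collection spans $\ge t(k-1)+1$ variables of which at most $t-1$ have degree $\ge2$ within the collection, leaving $\ge t(k-2)+2$ variables that sit in a single chosen row and cannot cancel; the $O(1)$ excess hyperedges present \whp\ add only $O(1)$ further sparse vectors. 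For general $d<d_k$ one separates the core rows (which are supported on the core columns, so their span is the row space of the reduced system $\vA_*x_*=\vy_*$) from the peeled rows (which meet the peeled columns in a triangular, hyperforest-like pattern), and argues that a weight-$\le2$ vector of $\mathrm{row}(\vA)$ supported on core columns already lies in $\mathrm{row}(\vA_*)$, while any such vector touching the peeled columns would force a sparse cancellation inside the peeled forest meshed with the core, which a first-moment estimate precludes except for $o(n)$, resp.\ $o(n^2)$, instances. For the core one invokes exactly the fact that makes $d<d_k$ --- the \emph{linear} surplus $n_*-\vm_*=\Theta(n)$ of core variables over core equations, the same input that yields $\rk(\vA)=\vm$ --- to conclude that $\mathrm{row}(\vA_*)$ has only $o(1)$ vectors of weight one or two, far fewer than the $o(n)$ and $o(n^2)$ required. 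Recasting the $2$-core rank estimates from ``no dependent rows'' to ``no sparse vectors in the row space'', together with handling the attachment of the peeled forest, is the real obstacle; everything else is bookkeeping and standard concentration.
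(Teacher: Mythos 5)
Your two reductions are sound and in fact mirror the architecture the paper itself uses: the planted resampling of $(\vA,\vy,\vec x,\vec x')$ is the Nishimori identity of \Lem~\ref{Fact_Nishimori} combined with the full-rank statement of \Thm~\ref{Thm_SAT}, and the reduction to ``few frozen coordinates and few weight-two vectors in the row space'' is precisely \Lem~\ref{Prop_kernel} together with \Cor~\ref{Prop_hom}: your frozen set $F$ is $S_0(\vA)$, your bad non-frozen pairs are exactly the pairs lying in a common class $S_u(\vA)$, $u\geq1$, so your structural target is equivalent to condition (\ref{eqOverlap2}). The gap is the final step, and it is not a technicality. The assertion that the linear surplus $n_*(\vA)-m_*(\vA)=\Theta(n)$ of core variables over core equations lets one ``conclude that the row space of $\vA_*$ has only $o(1)$ vectors of weight one or two'' is unsupported, and it is exactly the hard part of the entire problem. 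A positive surplus is only a necessary condition (its failure for $d>d_k$ gives \Lem~\ref{Cor_Mike}); it controls neither the rank of $\vA_*$ nor the sparse part of its row space. A weight-one or weight-two dual vector is a combination $v^{T}\vA_*$ ranging over all $v\in\FF_q^{m_*}$, and a first-moment count over such $v$ suffers from the very lottery effect described in \Sec~\ref{Sec_techniques}: already $\Erw[\#\{v:v^{T}\vA_*=0\}]=\Erw[q^{m_*-\rk(\vA_*)}]$ is exponentially large near $d_k$ even though the quantity equals $1$ \whp, because of atypically dense cores. The paper obtains (\ref{eqOverlap2}) only through the nullity bound of \Prop~\ref{Prop_freeEnergy} (the Aizenman--Sims--Starr coupling), the optimisation of \Lem~\ref{Prop_opt} and the derivative argument of \Prop~\ref{Prop_wrap}; your argument would have to import that machinery wholesale, at which point it reproduces the paper's route rather than replacing it.

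A secondary error: $G(\vA)$ is not a hyperforest plus $O(1)$ excess edges throughout $d<d_k^\star$. It acquires a giant component with $\Theta(n)$ excess already for $d>1/(k-1)$, far below $d_k^\star$. An empty $2$-core only guarantees that every nonempty collection of rows has some variable of degree exactly one within it, hence weight at least $1$ for a nonzero combination, not weight at least $3$. So even the subcritical case needs repair, though the supercritical regime up to $d_k$ is the real obstruction.
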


\noindent
\Thm~\ref{Thm_overlap} shows that right up to the satisfiability threshold, from a macroscopic point of view pairs of random solutions `look uncorrelated'.
Formally, they have the same (uniform) overlap as two points $\vec y,\vec y'$ that are just chosen uniformly and independently from the entire space $\FF_q^n$.
This is in contrast to other constraint satisfaction problems such as random graph $3$-colouring, where the overlap distribution undergoes a phase transition and becomes non-uniform strictly before the satisfiability threshold~\cite{CKPZ}.
Indeed, in statistical physics jargon~\cite{pnas} \Thm~\ref{Thm_overlap} shows that random linear equations do not undergo a {\em condensation phase transition}.

While \Thm~\ref{Thm_overlap} takes a global perspective, we next show that prior to the threshold $d_k$ the geometry of the set $\cS(\vA,\vy)$ undergoes an interesting local transition.
Let $x,x'\in\cS(\vA,\vy)$ be solutions to the linear system.
Call $x$ {\em reachable} from $x'$ if there exists a sequence $x^{(0)},\ldots,x^{(\ell)}$ of solutions with
$x^{(0)}=x$, $x^{(\ell)}=x'$ such that the Hamming distance of $x^{(i)}$ and $x^{(i+1)}$ is bounded by $(\ln n)\cdot(\ln\ln n)$ for all $i<\ell$.%
\footnote{The function $(\ln n)\cdot(\ln\ln n)$ could be replaced by any function that grows asymptotically faster than $\ln n$ as $n\to\infty$.}
Hence, we can walk from $x$ to $x'$ within the set of solutions by only changing the values of only a small number of variables at each step.
Clearly, reachability is an equivalence relation.
We call the equivalence classes the {{\em solution clusters}} of the linear system and denote by $\Sigma(\vA,\vy)$ the set of all solution clusters.
The {\em distance} of two clusters $S,S'\in\Sigma$ is defined as the minimum Hamming distance between solutions $x\in S$ and $x'\in S'$.
Finally, we say that the random linear system enjoys a property {\em with high probability} (`\whp') if the probability that the property holds tends to one as $n\to\infty$.

\begin{theorem}\label{Cor_freezing}
Let $k\geq3$, let $q>1$ be a prime power and let $P$ be a permutation-invariant distribution on $\FF_q^{*\,k}$.
Further, with $\rho_{k,d}$ from \eqref{eqrho} let
	\begin{align}\label{eq2core}
	d_{k}^\star&=\inf\cbc{d>0:\rho_{k,d}>0}.
	\end{align}
\begin{enumerate}[(i)]
\item If $d<d_k^\star$, then the solutions to the linear system $\vA x=\vy$ form a single cluster \whp
\item If $d_k^\star<d<d_k$, then
	\begin{align}\label{eqCor_freezing}
	\frac1n\ln|\Sigma(\vA,\vy)|&\quad\stacksign{$n\to\infty$}\longrightarrow\quad \bc{\rho_{k,d}-d\rho_{k,d}^{k-1}+d(1-1/k)\rho_{k,d}^k}\ln q>0
		&\mbox{in probability}.
	\end{align}
	Moreover, the minimum distance between distinct solution clusters is $\Omega(n)$ \whp
\end{enumerate}
\end{theorem}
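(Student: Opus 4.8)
The plan is to peel the linear system down to its $2$-core and then play two complementary structural features against each other: the core carries a linear code of \emph{linear} minimum distance, which pins down the coarse invariant ``which core solution one projects onto'', while the peeled‑off part (the ``mantle'') is tree‑like and hence \emph{locally navigable}. Concretely, first run the peeling process of \Sec~\ref{Sec_intro}, recording the reduced system $\vA_*x_*=\vy_*$, the sequence of pivot pairs (a degree‑$1$ column together with the row deleted with it), and the set $F$ of \emph{free} columns (those deleted without a row: the originally isolated variables and the columns that reach degree $0$ during peeling). Back‑substitution along the reverse peeling order furnishes a bijection $\cS(\vA,\vy)\cong\cS(\vA_*,\vy_*)\times\FF_q^F$: a solution is obtained by choosing a core solution and values for the free variables and propagating. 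In particular $\ker\vA\cong\ker\vA_*\oplus W$ with $W=\{z\in\ker\vA:z_*=\vec 0\}$ the span of the $|F|$ ``flip a free variable'' kernel vectors; and since $d<d_k$ forces $\rk(\vA_*)=m_*$ \whp\ by~\eqref{eqrank}, we get $\nul(\vA_*)=n_*-m_*$ and $\dim W=(n-\vm)-(n_*-m_*)$, where $n_*,m_*$ are the numbers of columns and rows of $\vA_*$. The classical $2$-core computation gives $(n_*-m_*)/n\to\rho_{k,d}-d\rho_{k,d}^{k-1}+(1-1/k)d\rho_{k,d}^k$, i.e.\ exactly the exponent in~\eqref{eqCor_freezing}.

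\emph{Core rigidity.} The next step is to show that \whp\ every nonzero $z\in\ker\vA_*$ has weight $\ge\delta n$ for a constant $\delta=\delta(k)>0$. This is a first‑moment estimate: conditionally on the $2$-core parameters, $\vA_*$ is a uniformly random matrix with $k$ nonzero entries per row (drawn from $P$) and all column degrees $\ge 2$, and one sums over supports $T$ with $1\le|T|\le\delta n$ the expected number of kernel vectors with support exactly $T$. Every row meeting $T$ must meet it at least twice, since otherwise that row is violated by $z$, while every variable of $T$ lies in at least two such rows; these two density constraints together with the minimum‑degree conditioning make small $T$ exponentially unlikely, uniformly over the range of $t=|T|$. (For $q=2$ this is the known linear minimum distance of the XORSAT core code; the general‑$q$ computation is the same with coefficient factors $1/q$ replacing $1/2$.) Since $(\ln n)(\ln\ln n)=o(n)$, it follows that any kernel vector of $\vA$ of weight $\le(\ln n)(\ln\ln n)$ lies in $W$, hence every single step of a reachability chain preserves the core part. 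Thus reachability refines the partition of $\cS(\vA,\vy)$ by the value of $x_*$, giving $|\Sigma(\vA,\vy)|\ge q^{\,n_*-m_*}$; when the core is empty ($d<d_k^\star$) this is vacuous, and the whole content of (i) resides in the next step.

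\emph{Local navigability of $W$.} One shows that \whp\ $W$ is generated by kernel vectors of $\vA$ of weight $\le(\ln n)(\ln\ln n)$; equivalently, any two solutions with the same core part are joined by a chain with consecutive Hamming distance $\le(\ln n)(\ln\ln n)$. The mechanism: $z=x-x'\in W$ is supported on the mantle and vanishes on the core, and the mantle — being exactly the part deleted by the peeling — is tree‑like, so $z$ can be zeroed out from the leaves inward, at each step subtracting a kernel vector of $\vA$ supported on the non‑core part of a single pendant‑type equation, which is a legal move because those variables lie in no other surviving equation; the process terminates because what remains is supported on the core, where $z$ is already $\vec 0$. In the regime $d_k^\star<d<d_k$ this is especially clean, since there the mantle consists of $O(\log n)$‑size pendant trees hanging off the core and $O(\log n)$‑size components, so each ``flip a free variable'' vector already has weight $O(\log n)$. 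Combining with the rigidity step, each coset $x+W$ is \emph{exactly} one cluster, whence $|\Sigma(\vA,\vy)|=q^{\nul(\vA)}/q^{\dim W}=q^{\,n_*-m_*}$ and distinct clusters, having distinct core parts, lie at Hamming distance $\ge\delta n=\Omega(n)$. Taking $\tfrac1n\ln(\cdot)$ yields~\eqref{eqCor_freezing}; and for $d<d_k^\star$ (core empty, $W=\ker\vA$) there is a single cluster, which is~(i).

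\emph{Where the difficulty lies.} The delicate point is the local‑navigability step in the sub‑core regime $d<d_k^\star$, where the ``mantle'' is the entire macroscopic hypergraph: its kernel may contain minimal‑support vectors of linear weight (long ``spines'' of hyperedges), so one cannot simply quote a basis of short vectors; one must organise the leaves‑inward decomposition so that the required changes are absorbed into the plentiful degree‑$1$ variables, and in particular handle the (rare, short) Berge cycles that carry pendant leaves on their edges, as these components are not hyperforests — here one uses weight‑$\le k$ moves that touch a leaf of each of two consecutive cyclic edges. The core‑rigidity step is routine but requires carrying along the precise $2$-core distribution in the first‑moment bound.
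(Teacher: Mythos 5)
Your architecture --- peel to the $2$-core, establish rigidity of the core code, establish local navigability of the peeled-off part, and identify clusters with core solutions --- is the same as the paper's, which implements it by importing the structural results of Achlioptas and Molloy. The genuine problem is your ``core rigidity'' step: the claim that \whp\ every nonzero $z\in\ker\vA_*$ has weight $\geq\delta n$ is false. With probability bounded away from zero the $2$-core contains a short cycle of variables each of degree exactly two, each lying only in the two consecutive equations of the cycle (a \emph{core flippable cycle} in the terminology of Achlioptas--Molloy); such a cycle of length $t$ supports a nonzero vector of $\ker\vA_*$ of weight $t=O(1)$ whenever the cyclic product of coefficient ratios equals one, which holds always for $q=2$ and with probability bounded below in general. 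Your first-moment computation would in fact detect this: the expected number of weight-$t$ kernel vectors of $\vA_*$ is $\Theta(1)$ for constant $t$, not exponentially small, so the bound cannot be made ``uniform over the range of $t=|T|$''. Consequently reachability does \emph{not} refine the partition by core value, a cluster is in general a union of several cosets of $W$, and the exact statements ``each coset $x+W$ is exactly one cluster'' and $|\Sigma(\vA,\vy)|=q^{\,n_*-m_*}$ fail.

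What is true, and what the paper uses, is rigidity \emph{modulo flippable cycles}: any two core solutions either agree on all variables off the core flippable cycles or differ in $\Omega(n)$ coordinates, and \whp\ at most $\sqrt{\ln\ln n}$ variables lie on core flippable cycles. This suffices for the theorem, since the resulting multiplicative error in the cluster count is $q^{O(\sqrt{\ln\ln n})}$, which disappears after taking $\frac1n\ln(\nix)$, and distinct clusters still have core parts differing off the flippable cycles, hence are $\Omega(n)$ apart. So your argument is repairable, but the rigidity lemma you propose to prove is false as stated and the step you label ``routine'' does not close without isolating the flippable cycles first. Your local-navigability step is essentially the paper's reverse-peeling argument via the reachability sets $R(\vA,\pi,x_i)$; the $O((\ln\ln n)\ln n)$ bound on these sets, which you correctly identify as the delicate point below $d_k^\star$, is precisely what the paper imports from Achlioptas--Molloy rather than reproving.
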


Thus, while for $d<d_k^\star$ it is possible to walk within the set $\cS(\vA,\vy)$ from one solution to any other by changing at most $(\ln n)(\ln\ln n)$ variables at a time, for $d>d_k^\star$ the set of solutions shatters into an exponential number of clusters \whp\ that are separated by a linear Hamming distance.
Once more the threshold $d_{k}^\star$ has a combinatorial interpretation.
It is the smallest value of $d$ from where the aforementioned peeling process leaves us with a non-empty reduced system $\vA_*x_*=\vy_*$  with high probability.
In fact, the proof of \Thm~\ref{Cor_freezing} reveals that 
the clusters and the solutions to the reduced linear system are essentially in one-to-one correspondence.
In statistical physics jargon~\cite{pnas}, \Thm~\ref{Cor_freezing} shows that random linear equations undergo a {\em dynamical phase transition} at $d_k^\star$.
The case $q=2$  was previously established by Achlioptas and Molloy~\cite{AchlioptasMolloy} and independently by Ibrahimi et al.~\cite{Ibrahimi}.

As an application of \Thm~\ref{Thm_SAT}
we obtain a result on the satisfiability of random equations over finite Abelian groups.
Thus, let $(\Gamma,+)$ be a finite Abelian group of order greater than one.
Then $\Gamma$ gives rise to a natural generalisation of the random $k$-XORSAT problem.
Namely, with $(\vec\alpha_{i,j})_{i,j}$ as in the definition of the random matrix $\vA$
and with a uniformly and independently chosen $\vy\in\Gamma^{\vm}$, we may ask whether it is possible to simultaneously satisfy
the random equations $\sum_{h=1}^kx_{\vec \alpha_{i,h}}=\vec y_i$.
The following theorem shows that the satisfiability threshold for this problem coincides with $d_k$ from (\ref{eqdk}), for any {finite Abelian} group $\Gamma$.

\begin{theorem}\label{Thm_Abelian}
Let $k\geq3$ and let $(\Gamma,+)$ be a finite Abelian group of order greater than $1$.
Then
	\begin{align*}
	\lim_{n\to\infty}\pr\brk{\exists x\in\Gamma^n:\bigwedge_{i=1}^{\vm}\bc{\sum_{h=1}^kx_{\vec \alpha_{i,h}}=\vec y_i}}
		&=\begin{cases}
			1&\mbox{ if }d<d_k,\\
			0&\mbox{ if }d>d_k.
			\end{cases}
	\end{align*}
\end{theorem}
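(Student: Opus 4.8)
\emph{Proof strategy.} The plan is to deduce \Thm~\ref{Thm_Abelian} from the finite-field result \Thm~\ref{Thm_SAT} by two reductions: first decompose $\Gamma$ into cyclic prime-power pieces, and then, within each piece, reduce modulo a prime to land in a prime field. Concretely, I would fix an isomorphism $\Gamma\cong\ZZ_{q_1}\times\cdots\times\ZZ_{q_s}$ with each $q_j=p_j^{e_j}$ a prime power. Writing each variable in its $s$ coordinates, the random system $\bigwedge_{i=1}^{\vm}\bc{\sum_{h=1}^kx_{\vec\alpha_{i,h}}=\vec y_i}$ over $\Gamma$ splits into $s$ independent systems, the $j$-th being of the same shape over $\ZZ_{q_j}$, with the same incidence data $(\vec\alpha_{i,h})_{i,h}$ and a fresh uniformly random right-hand side in $\ZZ_{q_j}^{\vm}$. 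Since $\Gamma$, and hence $s$, is a fixed finite object, it is enough to analyse, for a single prime power $q=p^e$, the random system $\vA x=\vy$ over $\ZZ_q$, where $\vA$ is the $\vm\times n$ matrix with a $1$ in each position $\vec\alpha_{i,h}$ and $0$ elsewhere and $\vy$ is uniform on $\ZZ_q^{\vm}$; one then combines the $s$ conclusions by a union bound.

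\emph{The key observation and the upper bound.} Let $\bar\vA$ be the reduction of $\vA$ modulo $p$ and $\bar\vy:=\vy\bmod p$. Because every non-zero entry of $\vA$ equals $1$ and $1\not\equiv0\pmod p$, no row of $\bar\vA$ loses an entry: $\bar\vA$ has exactly $k$ non-zero entries per row and is distributed \emph{exactly} as the random matrix of \Thm~\ref{Thm_SAT} over the prime field $\FF_p$ with $P$ the point mass on $(1,\dots,1)$ (trivially permutation-invariant), while $\bar\vy$ is uniform on $\FF_p^{\vm}$ and independent of $\bar\vA$ given $\vm$. For $d>d_k$: a solution of the $\Gamma$-system restricts to a solution of $\vA x=\vy$ over every cyclic factor $\ZZ_q$, and each such restriction reduces modulo $p$ to a solution of $\bar\vA x=\bar\vy$; but by \Thm~\ref{Thm_SAT} (with $q$ replaced by the prime $p$) this reduced $\FF_p$-system is unsatisfiable \whp, so the $\Gamma$-system is unsatisfiable \whp, which settles the case $d>d_k$.

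\emph{The lower bound.} For $d<d_k$ I would invoke the rank statement \eqref{eqrank} of \Thm~\ref{Thm_SAT}, read over $\FF_p$ as above, to obtain $\rk(\bar\vA)=\vm$ with probability tending to $1$. On that event, $\vm$ of the columns of $\vA$ form a square submatrix $\vec B$ with $\det\vec B\not\equiv0\pmod p$; since $p$ is the only prime dividing $q$, $\det\vec B$ is then a unit of $\ZZ_q$, so $\vec B\in\mathrm{GL}_{\vm}(\ZZ_q)$. Hence the $\ZZ_q$-linear map $x\mapsto\vA x$ is surjective --- for any target $\vy$, set the selected $\vm$ coordinates of $x$ to $\vec B^{-1}\vy$ and all others to $0$ --- so the $\ZZ_q$-system is solvable \whp; intersecting the $s$ such events, which are finitely many since $\Gamma$ is fixed, shows the $\Gamma$-system is solvable \whp, as required. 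As a byproduct, surjectivity of each factor map gives that for $d<d_k$ the $\Gamma$-system has exactly $|\Gamma|^{n-\vm}$ solutions \whp, matching the remark following \Cor~\ref{Cor_rank}.

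\emph{Main obstacle.} Candidly, there is no deep obstacle here once \Thm~\ref{Thm_SAT} is in place: the whole argument pulls back to the prime-field case. The two steps that want a line of justification are (i) that reduction modulo $p$ preserves the law of the random system --- this is exactly where it matters that all coefficients equal $1$, so that $\bar\vA$ still has $k$ non-zero entries per row --- and (ii) the passage from ``full row rank over $\FF_p$'' to ``surjective over $\ZZ_{p^e}$'', which is the elementary lifting fact that an integer determinant invertible modulo $p$ is invertible modulo $p^e$ (a special case of invertibility over the local ring $\ZZ_{p^e}$, or of Hensel's lemma). The only genuine alternative, if one wished to avoid citing \eqref{eqrank}, would be to rerun for $\ZZ_{p^e}$ the peeling/$2$-core analysis behind \Thm~\ref{Thm_SAT} --- legitimate because that analysis depends only on the hypergraph $(\vec\alpha_{i,h})_{i,h}$, not on the coefficient ring --- but it is substantially longer and I would not take that route.
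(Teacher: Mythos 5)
Your proposal is correct, and at the top level it follows the same route as the paper: primary decomposition of $\Gamma$ into cyclic factors of prime-power order, reduction of each factor to \Thm~\ref{Thm_SAT} with the all-ones coefficient distribution, and a union bound over the finitely many factors. The genuine difference lies in how a factor $\ZZ/p^e\ZZ$ with $e\geq2$ is tied to a finite field. The paper simply asserts that each cyclic group $\ZZ/q_i\ZZ$ is the additive group of $\FF_{q_i}$; this is only true when $q_i$ is prime, since $(\FF_{p^e},+)\cong(\ZZ/p\ZZ)^e$ is elementary abelian while $\ZZ/p^e\ZZ$ has an element of order $p^e$ (indeed $\ZZ/4\ZZ$ cannot be realised as a direct sum of additive groups of finite fields at all, as those have squarefree exponent). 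Your two extra steps are exactly what is needed to bridge this gap: (i) reducing the system modulo $p$ preserves the law of the random matrix because every coefficient equals $1\neq0$ in $\FF_p$, which yields unsatisfiability for $d>d_k$; and (ii) for $d<d_k$, full row rank over $\FF_p$, supplied by \eqref{eqrank}, lifts to surjectivity over $\ZZ/p^e\ZZ$ because an integer determinant that is a unit modulo $p$ is a unit modulo $p^e$. So your write-up is not merely an expansion of the paper's one-line argument but a correct repair of it. The only point I would make explicit is that $\bar\vy=\vy\bmod p$ remains uniform on $\FF_p^{\vm}$ and independent of $\bar\vA$ given $\vm$ (clear, since reduction modulo $p$ has fibres of equal size), which you use implicitly in both directions.
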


\subsection{Related work}\label{Sec_related}
Over the past couple of decades the hunt for satisfiability thresholds has emerged to be remarkably challenging and
the ensuing work has uncovered surprising connections with other areas of mathematics, particularly mathematical physics.
The oldest specimen of a satisfiability threshold problem appears in the paper of \Erdos\ and \Renyi\ that started the theory of random graphs~\cite{ER60}.
One of the many problems that they posed in that paper asks for the $q$-colorability threshold in random graphs, i.e., the largest edge density up to which the chromatic number of the random graph with $n$ vertices and $m$ edges is bounded by $q$.
This is the single problem from that seminal paper that remains open to this day.

Indeed, to date the exact satisfiability thresholds are known in just a handful of problems.
The first example was the $2$-SAT threshold
(satisfiability of random Boolean formulas in conjunctive normal form with two variables per clause), pinpointed independently by Chvatal and Reed~\cite{mick} and Goerdt~\cite{Goerdt} in 1992.
The proof exploits the peculiar structure of the $2$-SAT problem and is similar in spirit to a percolation argument.
An analogous percolation argument actually also solves the problem of random linear equations with $k=2$ variables per equation~\cite{Kolchin}.
The satisfiability threshold in another variant of the satisfiability problem, the 1-in-$k$-SAT problem, was determined by Achlioptas et al.~\cite{ACIM} via the analysis of an algorithm by the method of differential equations.

In all other examples where the satisfiability threshold is known the proof is by way of the second moment method, 
first applied to a variant of the $k$-SAT problem by Frieze and Wormald~\cite{FriezeWormald}, then
used simultaneously by Dubois and Mandler~\cite{DuboisMandler} in their work on  $3$-XORSAT and by Achlioptas and Moore~\cite{nae} in their pathbreaking paper on the approximate location of the $k$-SAT threshold for $k\geq3$.
Currently second moment based proofs are known for the precise location of the $k$-SAT threshold~\cite{DSS3}, the $k$-NAESAT threshold in formulas with regular variable degrees~\cite{DSS1} and the satisfiability threshold in $k$-SAT formulas with regular literal degrees~\cite{KostaSAT},
always under the assumption that $k$ exceeds some large constant.
Additionally, the second moment method has been extended to pinpoint the satisfiability thresholds in 
certain specific examples of `uniquely extendible' constraint satisfaction problems~\cite{Connamacher,GoerdtFalke}, where, similarly as in linear equations, given the values of the first $k-1$ variables in a constraint there is precisely one way of setting the last variable so that the constraint is satisfied.
Moreover,  pinpointing the exact threshold for random regular positive $1$-in-$k$-SAT, a.k.a. Exact Cover, Moore~\cite{Moore}
provided evidence supporting a conjecture from~\cite{LM} that the second moment method is tight for ``locked'' problems.

A question for future work might be whether the present proof technique can be extended to derive the satisfiability thresholds of {\em all} uniquely extendible problems, or at least of the `locked' problems defined in~\cite{LM}.

\aco{The case of denser random matrices over finite fields with a (not too slowly) diverging average number of non-zero entries per row is significantly simpler, as is the $k=2$ of precisely two non-zero entries per row.
The literature on these cases is surveyed in~\cite{Kolchin}.}

Based on intuition developed in the study of disordered physical systems such as glasses, physicists developed an analytic but non-rigorous method for the study of sparse random structures called the {\em cavity method}~\cite{MM}.
The aforementioned derivations of the thresholds in random $k$-SAT and $k$-NAESAT not only verify the physics `predictions' in these specific cases but the proofs also extensively harness combinatorial insights gained in the physics work.
From the physics viewpoint the $k$-XORSAT problem serves as a template of a generic random constraint satisfaction problem~\cite[\Chap~18]{MM}.
In particular, a core tenet of the physics deliberations is that a decomposition of the set of solutions into clusters as provided by \Thm~\ref{Cor_freezing} is a universal feature of random constraint satisfaction problems.
This hypothesis provides the basis for the Survey Propagation message-passing scheme, upon which the physics calculations of satisfiability thresholds hinge~\cite{MPZ}.
Indeed, it was observed shortly after the work of Dubois and Mandler that the Survey Propagation technique `predicts' the correct satisfiability threshold in random $k$-XORSAT~\cite{CDMM,MRTZ} and
in~\cite{MRTZ} the Survey Propagation calculations are compared in detail with the second moment argument for general $k\geq3$, which is not, however, carried out analytically in full detail.

The random $k$-XORSAT problem is conceptually simpler than random $k$-SAT or random graph $q$-colouring because in the former the overlap between two randomly chosen solutions remains uniform right up to the satisfiability threshold.
\Thm~\ref{Thm_overlap} shows that random linear equations over the field $\FF_q$ share the same behaviour.
For $q=2$, Pittel and Sorkin~\cite{PittelSorkin} even showed that overlap concentration  holds inside the scaling window and thus established satisfiability of the reduced linear system $\vA_{*}x=\vy_*$ \whp\ whenever $m_*-n_*\gg1$.
By contrast, in random $k$-SAT and random graph $q$-colouring a {condensation phase transition} has been predicted to occur shortly before the actual satisfiability threshold~\cite{pnas}.
In the random graph colouring problem this conjecture has been verified rigorously~\cite{CKPZ}.
This phase transition marks the onset of intricate long-range correlations that 
spoil the concentration of the overlap.
Indeed, avoiding these long-range correlations is a major technical hurdle in the aforementioned work on the $k$-SAT threshold~\cite{KostaSAT,DSS3}.
As we shall see, the absence of a condensation phase transition in random linear equations is a subtle consequence of the fact that the set of solutions consists of affine translations of the kernel of the matrix $\vA$.

Beyond their intrinsic combinatorial interest, random constraint satisfaction problems have come to play a role in several other disciplines.
For instance, the random $k$-SAT problem plays an important role in computer science as a benchmark for satisfiability algorithms~\cite{Cheeseman,KirkpatrickSelman,MitchellSelmanLevesque} as well as in lower bounds in proof complexity (e.g.,~\cite{BenSasson}).
Moreover, random constraint satisfaction problems have been seized upon as gadgets in hardness proofs in computational complexity theory
	(e.g.,~\cite{Andreas,SlyUniqueness,VV}).
Additionally, low-density parity check (`LDPC') and low-density generator matrix (`LDGM') codes, based on random linear systems over finite fields, are the mainstay of modern coding theory~\cite{RichardsonUrbanke}.
Finally, there are intriguing connections to the statistical mechanics of disordered systems~\cite{MM}, and there are physics-inspired applications to a panoply of Bayesian inference problems~\cite{LF}.

\subsection{Preliminaries and notation}\label{Sec_prelims}
Throughout the paper we let $q\geq2$ be a prime power, $k\geq3$ an integer, $P$ a permutation-invariant probability distribution on $\FF_q^{*\,k}$
and $n\geq k$ an integer.
Furthermore, we always let $\vm$ be a random variable with distribution $\Po(dn/k)$ and $\vA$ the random $\vm\times n$ matrix as described above.
Sometimes we write $\vA_n=\vA$ to make the dependence on $n$ explicit.
Moreover, for an integer $m\geq0$ we write $\vA_{n,m}$ for the random matrix $\vA_n$ given that $\vm=m$.
As before $\vy\in\FF_q^{\vm}$ denotes a uniformly random vector, chosen independently of $\vA$ given $\vm$.

For an integer $l\geq1$ we use the shorthand $[l]=\{1,\ldots,l\}$.
Furthermore, we use the standard asymptotic symbols $o(\nix),O(\nix),\Omega(\nix)$ always with respect to the limit $n\to\infty$.
For instance, $o(1)$ stands for a term that tends to $0$ as $n\to\infty$.

We denote the rank, the nullity and the {kernel of an} $m\times n$-matrix $A$ by $\rk(A)$, $\nul(A)$ and $\ker(A)$, respectively.
Further, $\Im(A)$ denotes the image of $A$.
Additionally, we write $\cS(A,y)$ for the set of solutions to the linear system $Ax=y$ and
 $Z(A,y)=|\cS(A,y)|$ for the number of solutions.
Further, we let
	\begin{align}\label{eqZnullity}
	Z(A)&=Z(A,0)=q^{\nul(A)}.
	\end{align}

Suppose that $A$ is an $m\times n$-matrix over $\FF_q$.
We define a hypergraph $G(A)$ that describes the positions of the non-zero entries of $A$.
The vertex set of this hypergraph is the set $\{x_1,\ldots,x_n\}$.
Moreover, each row of $A$ induces a hyperedge.
Specifically, the hyperedge corresponding to the $i$'th row contains all $x_h$ such that the $(i,h)$'th entry of $A$ is non-zero.
Thus, $G(A)$ has at most $m$ hyperedges.
Furthermore, the hypergraph $G(\vA)$ of the random matrix $\vA$ is $k$-uniform (i.e., every hyperedge has $k$ vertices).

Remember that the {\em $2$-core} of a hypergraph $G$ is the largest sub-hypergraph in which every vertex has degree at least two (i.e., it appears in at least two hyperedges).
The $2$-core of a hypergraph can be constructed by iteratively removing vertices of degree zero or one along with, in the latter case, the single hyperedge in which the vertex appears.
This is precisely analogous to the aforementioned peeling process for obtaining the reduced linear system $(\vA_*,\vy_*)$.
In fact, the hypergraph $G(\vA_*)$ coincides with the 2-core of the hypergraph $G(\vA)$.
Hence, the following theorem determines the asymptotic size of the reduced matrix $\vA_*$.
Recall $\rho_{k,d}$ from (\ref{eqrho}) and $d_k^\star$ from (\ref{eq2core}).

\begin{theorem}[\cite{GaoMolloy}]\label{Thm_Mike}
The $2$-core of $\G(\vA)$ is empty \whp\ if $d<d_k^\star$.
Moreover, for $d>d_k^\star$ the number $n_*(\vA)$ of vertices and the number $m_*(\vA)$ of edges of the $2$-core of $G(\vA)$ satisfy
	\begin{align}
	n_*(\vA)/n&\ \stacksign{$n\to\infty$}\longrightarrow\ \rho_{k,d}-d\rho_{k,d}^{k-1}+d\rho_{k,d}^k\quad\mbox{and}\quad
	m_*(\vA)/n\ \stacksign{$n\to\infty$}\longrightarrow\ d\rho_{k,d}^k/k
		\qquad\mbox{in probability}. \label{nm}
	\end{align}
	{Moreover, the function
	\begin{equation}
	\pi_{k}: d\in[d_k^\star,\infty)\mapsto \frac dk\rho_{k,d}^k{\left( \rho_{k,d}-d\rho_{k,d}^{k-1}+d\rho_{k,d}^k\right)}^{-1} \label{monotone}
	\end{equation}
	 is strictly increasing.}
\end{theorem}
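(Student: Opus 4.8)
\emph{Proof strategy.} The plan is to reparametrise the formula in \eqref{monotone} so that it becomes a function of a single, monotone parameter carrying a transparent probabilistic meaning. Set $\lambda=\lambda(d):=d\,\rho_{k,d}^{k-1}$. By the fixed-point equation defining $\rho_{k,d}$ in \eqref{eqrho} we then have $\rho_{k,d}=1-\eul^{-\lambda}$, and conversely $d=g(\lambda):=\lambda\,(1-\eul^{-\lambda})^{-(k-1)}$. An elementary calculation shows that, for $k\ge3$, the function $g$ is strictly decreasing on $(0,\lambda^\star)$ and strictly increasing on $(\lambda^\star,\infty)$, where $\lambda^\star>0$ is the unique positive solution of $\eul^\lambda-1=(k-1)\lambda$: indeed $\frac{\dd}{\dd\lambda}\ln g(\lambda)=\lambda^{-1}-(k-1)(\eul^\lambda-1)^{-1}$ has the same sign as $\eul^\lambda-1-(k-1)\lambda$, and this last quantity vanishes at $\lambda=0$ and is first negative and then positive because its derivative $\eul^\lambda-(k-1)$ changes sign exactly once. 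Since moreover $g(\lambda)\to\infty$ as $\lambda\to0^+$ or $\lambda\to\infty$, the minimum of $g$ is attained at $\lambda^\star$, and one checks directly that $g(\lambda^\star)=d_k^\star$ from \eqref{eq2core}. Because $\rho_{k,d}$ is by definition the \emph{largest} fixed point and $\lambda\mapsto1-\eul^{-\lambda}$ is increasing, $\rho_{k,d}$ corresponds to the largest of the (at most two) preimages of $d$ under $g$, that is, to the preimage lying on the increasing branch $[\lambda^\star,\infty)$. Hence $d\mapsto\lambda(d)$ is a strictly increasing bijection from $[d_k^\star,\infty)$ onto $[\lambda^\star,\infty)$.

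Next I would substitute $\rho_{k,d}=1-\eul^{-\lambda}$, $d\,\rho_{k,d}^{k-1}=\lambda$ and $d\,\rho_{k,d}^{k}=\lambda(1-\eul^{-\lambda})$ into \eqref{monotone}. The denominator simplifies to $\rho_{k,d}-d\,\rho_{k,d}^{k-1}+d\,\rho_{k,d}^{k}=1-(1+\lambda)\eul^{-\lambda}$ (which is $>0$ for $\lambda>0$, consistent with $n_*>0$), and the numerator equals $\frac1k\lambda(1-\eul^{-\lambda})$; multiplying numerator and denominator by $\eul^\lambda$ gives
\[
\pi_k(d)=\frac1k\cdot\frac{\lambda(\eul^\lambda-1)}{\eul^\lambda-1-\lambda}=\frac1k\,F(\lambda),\qquad\mbox{where}\qquad F(\lambda):=\frac{\lambda(\eul^\lambda-1)}{\eul^\lambda-1-\lambda}.
\]
Expanding the exponential series one recognises $F(\lambda)=\bigl(\sum_{j\ge2}\lambda^j/(j-1)!\bigr)\big/\bigl(\sum_{j\ge2}\lambda^j/j!\bigr)=\Erw[\vec J_\lambda]$, where $\vec J_\lambda$ is a $\Po(\lambda)$ variable conditioned on the event $\{\vec J_\lambda\ge2\}$.

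It therefore remains to prove that $\lambda\mapsto\Erw[\vec J_\lambda]$ is strictly increasing on $(0,\infty)$; composing with the strictly increasing map $d\mapsto\lambda(d)$ from the first step then shows that $\pi_k=\tfrac1k\,F\circ\lambda$ is strictly increasing, as claimed. For the monotonicity of $F$ I would invoke the classical fact that the Poisson family $(\Po(\lambda))_{\lambda>0}$ has monotone likelihood ratios — for $\lambda_1<\lambda_2$ the ratio $\pr[\Po(\lambda_2)=j]/\pr[\Po(\lambda_1)=j]$ is non-decreasing in $j$ — so that $\Po(\lambda)$ is stochastically non-decreasing in $\lambda$, and that conditioning on the upper set $\{\cdot\ge2\}$ preserves this stochastic order. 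Consequently $\vec J_{\lambda_1}$ is stochastically dominated by $\vec J_{\lambda_2}$, and since the two conditional laws are distinct the inequality of expectations is strict, i.e.\ $\Erw[\vec J_{\lambda_1}]<\Erw[\vec J_{\lambda_2}]$. (Alternatively one differentiates $F$ directly and checks $F'>0$, but the probabilistic argument sidesteps the bookkeeping.)

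The one genuinely delicate point sits in the first paragraph: one must be certain that for \emph{every} $d\in[d_k^\star,\infty)$ the relevant root $\rho_{k,d}$ lies on the increasing branch of $g$, so that $d\mapsto\lambda(d)$ is monotone, and one must identify $g(\lambda^\star)=d_k^\star$ so that the parametrisation covers exactly the stated domain. Once this reparametrisation is justified the remaining steps are routine, so I expect the unimodality analysis of $g$ to be the main, albeit elementary, obstacle.
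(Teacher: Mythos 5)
Your argument addresses only the last assertion of the theorem, the strict monotonicity of $\pi_k$ in \eqref{monotone}. The first two assertions --- that the $2$-core of $G(\vA)$ is empty \whp\ for $d<d_k^\star$, and the in-probability limits \eqref{nm} for $n_*(\vA)/n$ and $m_*(\vA)/n$ --- are the actual probabilistic content of the statement and cannot be extracted from the fixed-point calculus you carry out; they require a genuine analysis of the peeling/stripping process. In the paper these are imported from \cite{GaoMolloy}: with $\mu_{k,d}$ the larger root of $h(\mu)=\mu(1-\eul^{-\mu})^{-(k-1)}=d$ (your $\lambda(d)$), \cite[\Lem~7]{GaoMolloy} gives $n_*(\vA)/n\to1-(1+\mu_{k,d})\eul^{-\mu_{k,d}}$ and $m_*(\vA)/n\to\mu_{k,d}(1-\eul^{-\mu_{k,d}})/k$, which become \eqref{nm} under the substitution $\rho_{k,d}=1-\eul^{-\mu_{k,d}}$. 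You should either cite this external result explicitly or acknowledge that your write-up does not prove it; as it stands this is a gap relative to the full statement.

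For the monotonicity claim itself, your proof is correct and follows essentially the same route as the paper: the change of variables $\lambda=d\rho_{k,d}^{k-1}$, the identification of $d\mapsto\lambda(d)$ with the inverse of the increasing branch of $g(\lambda)=\lambda(1-\eul^{-\lambda})^{-(k-1)}$ (the paper's $h$), and the rewriting of $k\pi_k$ as $\lambda(\eul^\lambda-1)/(\eul^\lambda-1-\lambda)$. Where you add value is in making this part self-contained: the paper cites \cite{GaoMolloy} both for the unimodality of $h$ and for the monotonicity of the resulting one-variable function (\cite[\Lem~27]{GaoMolloy}), whereas you verify the former by an elementary sign analysis of $(\ln g)'$ and the latter by recognising $k\pi_k(d)$ as $\Erw[\Po(\lambda)\mid\Po(\lambda)\geq2]$ and invoking the monotone likelihood ratio property of the Poisson family (preserved under conditioning on the upper set $\{\,\cdot\geq2\}$), with strictness because the likelihood ratio is strictly increasing on the support. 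Both of these verifications are sound, and the probabilistic reading of $k\pi_k$ as the mean edge-degree of a core vertex is a pleasant bonus.
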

\begin{proof}
{
The parametrisation employed in~\cite{GaoMolloy} differs from the one used here, so we should point out how to get from one to the other.
The parameters $c,r$ in~\cite{GaoMolloy} can be expressed in terms of ours as $c=d/k$, $r=k$.
A convex function $h(\mu)=\mu/(1-\eul^{-\mu})^{k-1}$ was employed~\cite[eq.(3)]{GaoMolloy} to deduce $d_k^\star$.
If $k\ge 3$ then $h(\mu)$ tends to infinity as $\mu\to 0$ and as $\mu\to\infty$.
Consequently, there is a unique stationary point $\mu^\star$, which is the global minimum of $h(\mu)$, and $d_k^\star=h(\mu^\star)$.
If $d>d_k^\star$, then there are two solutions of the equation $h(\mu)=d$.
Let $\mu_{k,d}$ denote the larger one.
Using the transformation $x=1-\exp(-\mu)$ 
we obtain a one-to-one correspondence between the two solutions of $h(\mu)=d$ and the two positive fixed points of $x\in[0,1]\mapsto1-\exp(-dx^{k-1})$.
Moreover, $\mu_{k,d}$ corresponds to $\rho_{k,d}$ under this transformation, and \cite[\Lem~7]{GaoMolloy} says that $n_*(\vA)/n\to 1-(1+\mu_{k,d})\eul^{-\mu_{k,d}}$ and $m_*(\vA)/n\to \mu_{k,d}(1-\eul^{-\mu_{k,d}})/k$ in probability. Immediately this gives (\ref{nm}). 
 
 Again, using $\rho_{k,d}=1-\exp(-\mu_{k,d})$, we have $\pi_k(d)=g_k(\mu_{k,d})/k$, where
 \[
 g_k(x)=\frac{x(1-\eul^{-x})}{k(1-(1+x)\eul^{-x})}.
 \]
 Since $h(\mu)$ is an increasing function on $[\mu^\star,\infty)$, $\mu_{k,d}$ is an increasing function of $d$ on $[d_k^\star,\infty)$ (recall that $d_k^\star=h(\mu^\star)$). By~\cite[\Lem~27]{GaoMolloy}, $g_k(x)$ is an increasing function on $(0,\infty)$. It follows then that 
$\pi_k(d)$ is an increasing function on $[d_k^\star,\infty)$.}
\end{proof}
\noindent
As an immediate consequence  we obtain the upper bound on the satisfiability threshold for \Thm~\ref{Thm_SAT}.

\begin{lemma}\label{Cor_Mike}
If $d>d_k$, then $\lim_{n\to\infty}\pr[\exists x\in\FF_q^n:\vA x=\vy]=0$.
\end{lemma}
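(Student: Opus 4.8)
The plan is to project the linear system onto the $2$-core of $G(\vA)$ and to exploit that there the right-hand side is still uniformly random. Recall from the discussion following \Thm~\ref{Thm_SAT} that the peeling process produces a reduced system $\vA_* x_*=\vy_*$ whose coefficient hypergraph $G(\vA_*)$ is the $2$-core of $G(\vA)$, and that every solution of $\vA x=\vy$ restricts to a solution of $\vA_* x_*=\vy_*$; so, writing $n_*=n_*(\vA)$ and $m_*=m_*(\vA)$ for the numbers of columns and rows of $\vA_*$, we get $\pr[\exists x\in\FF_q^n:\vA x=\vy]\le\pr[\exists x_*:\vA_* x_*=\vy_*]$. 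The crucial point is that the peeling process is a deterministic function of $G(\vA)$, i.e.\ of the positions of the non-zero entries of $\vA$ alone, and therefore never consults $\vy$. Hence, revealing first $G(\vA)$, then the non-zero entries of $\vA$, and only then $\vy$, one sees that conditionally on $\vm$ and $\vA_*$ the truncated vector $\vy_*$ is still uniformly distributed over $\FF_q^{m_*}$. Since $\Im(\vA_*)$ is a subspace of $\FF_q^{m_*}$ of dimension $\rk(\vA_*)\le\min\{m_*,n_*\}$, this yields $\pr[\vy_*\in\Im(\vA_*)\mid\vm,\vA_*]=q^{\rk(\vA_*)-m_*}\le\min\{1,q^{n_*-m_*}\}$, and taking expectations,
\[
\pr\brk{\exists x\in\FF_q^n:\vA x=\vy}\ \le\ \Erw\brk{\min\cbc{1,q^{n_*-m_*}}}.
\]

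It then remains to see that $m_*-n_*=\Omega(n)$ \whp\ when $d>d_k$. No $d<d_k^\star$ belongs to the set defining $d_k$ in \eqref{eqdk} (there $\rho_{k,d}=0$, so the bracketed expression vanishes), so $d_k\ge d_k^\star$; hence $d>d_k$ forces $d>d_k^\star$, and then \Thm~\ref{Thm_Mike} applies, the limit $N(d):=\rho_{k,d}-d\rho_{k,d}^{k-1}+d\rho_{k,d}^k$ of $n_*/n$ being strictly positive for every $d\ge d_k^\star$ (it equals $1-(1+\mu_{k,d})\eul^{-\mu_{k,d}}$, cf.\ the proof of \Thm~\ref{Thm_Mike}). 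With $M(d):=d\rho_{k,d}^k/k$, the limit of $m_*/n$, the bracketed expression in \eqref{eqdk} equals $N(d)-M(d)$, which for $d\ge d_k^\star$ is negative precisely when $\pi_k(d)=M(d)/N(d)>1$. Since $\pi_k$ is strictly increasing by \Thm~\ref{Thm_Mike}, picking (by the infimum in \eqref{eqdk}) some $d'\in[d_k,d)$ with $N(d')-M(d')<0$ gives $\pi_k(d')>1$, whence $\pi_k(d)>\pi_k(d')>1$, i.e.\ $M(d)>N(d)$. Now fix $\eps:=(M(d)-N(d))/2>0$; by \eqref{nm} we have $\pr[m_*-n_*\ge\eps n]\to1$, and therefore
\[
\pr\brk{\exists x\in\FF_q^n:\vA x=\vy}\ \le\ \Erw\brk{\min\cbc{1,q^{n_*-m_*}}}\ \le\ \pr\brk{m_*-n_*<\eps n}+q^{-\eps n}\ \longrightarrow\ 0.
\]

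I expect the only genuinely delicate step to be the conditional-independence claim in the first paragraph: that $\vy_*$ remains uniform on $\FF_q^{m_*}$ given $\vA_*$ is exactly what collapses $\pr[\vy_*\in\Im(\vA_*)\mid\,\cdot\,]$ to $q^{\rk(\vA_*)-m_*}$, and the revelation order has to be arranged with some care because $m_*$ and the set of surviving rows are themselves random. Everything else is bookkeeping: reading the sizes of the $2$-core off \Thm~\ref{Thm_Mike} and transporting the infimum in \eqref{eqdk} through the monotonicity of $\pi_k$.
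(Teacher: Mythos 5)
Your proof is correct and follows essentially the same route as the paper's: reduce to the $2$-core system $\vA_*x_*=\vy_*$, use that $\vy_*$ remains uniform given $\vA_*$ so that $\pr[\vy_*\in\Im(\vA_*)\mid\vA_*]\leq q^{n_*-m_*}$, and deduce $m_*-n_*=\Omega(n)$ \whp\ for $d>d_k$ from \Thm~\ref{Thm_Mike}. You merely spell out two points the paper leaves implicit, namely the conditional uniformity of $\vy_*$ and the use of the monotonicity of $\pi_k$ to transport the defect $M(d)>N(d)$ from the infimum in \eqref{eqdk} up to $d$.
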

\begin{proof}
Since any solution to $\vA x=\vy$ induces a solution to $\vA_*x_*=\vy_*$, it suffices to prove that the reduced system does not have a solution \whp\
\Thm~\ref{Thm_Mike} shows that together with the choice \eqref{eqdk} of $d_k$, for any $d>d_k$ there exists $\eps>0$
such that $m_*(\vA)>n_*(\vA)+\eps n$ \whp\
Hence, the matrix $\vA_*$ has rank at most $m_*(\vA)-\eps n$ \whp\, and 
if so, then the probability that the random vector $\vy_*$ belongs to the image of $\vA_*$ is upper-bounded by $q^{-\eps n}$.
\end{proof}

Finally, a substantial part of the paper deals with the analysis of the uniform distribution on the set of solutions of a linear system.
More generally, for a finite set $\Omega\neq\emptyset$ let $\cP(\Omega)$ denote the set of probability distributions on $\Omega$.
Hence, for an integer $n\geq1$ we write $\cP(\Omega^n)$ for the set of probability distributions on the discrete cube $\Omega^n$.
Probability measures on discrete cubes are well understood and we shall use a few known facts and concepts about them from~\cite{Victor}.
First, for $\mu\in\cP(\Omega^n)$ and a set $I\subset[n]$ we denote by $\mu_I\in\cP(\Omega^I)$ the joint distribution of the coordinates in $I$.
That is,
	$$\mu_I(\sigma)=\sum_{\tau\in\Omega^n}\vecone\cbc{\forall i\in I:\sigma_i=\tau_i}\mu(\tau)\qquad\qquad\mbox{for }\sigma\in\Omega^I.$$
We normally write $\mu_{i_1,\ldots,i_k}$ instead of $\mu_{\{i_1,\ldots,i_k\}}$.
Further, the measure $\mu$ is {\em $(\eps,l)$-symmetric} if
	\begin{align*}
	\sum_{1\leq i_1<\cdots<i_l\leq n}\TV{\mu_{i_1,\ldots,i_l}-\mu_{i_1}\tensor\cdots\tensor\mu_{i_l}}<\eps n^l.
	\end{align*}
Intuitively this means that for most sets $\{i_1,\ldots,i_l\}$ of coordinates the joint distribution $\mu_{i_1,\ldots,i_l}$ is close to the
product distribution with the same marginals.
The following basic fact shows that in order to establish $(\delta,l)$-symmetry, it suffices to prove $(\eps,2)$-symmetry for a suitably small $\eps=\eps(\delta,l)>0$.

\begin{lemma}[\aco{\cite[Corollaries~2.3 and 2.4]{Victor}}]\label{lem:k-wise}
For any $\delta>0$, $l\geq3$, $\Omega\neq\emptyset$ there exists $\eps=\eps(\delta,k,\Omega)>0$ such that for all $n>1/\eps$ the following is true.
If $\mu\in\cP(\Omega^n)$ is $(\eps,2)$-symmetric, then  $\mu$ is $(\delta,l)$-symmetric.
\end{lemma}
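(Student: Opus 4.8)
This lemma is quoted from~\cite[Corollaries~2.3 and~2.4]{Victor}; here is how I would go about proving it. Write $S_r(\mu)=\binom{n}{r}^{-1}\sum_{i_1<\cdots<i_r}\TV{\mu_{i_1,\ldots,i_r}-\mu_{i_1}\tensor\cdots\tensor\mu_{i_r}}$, so that $(\eps,2)$-symmetry is, up to a factor $1+o(1)$, the statement $S_2(\mu)\le\eps$, while it suffices to prove that $S_l(\mu)\to0$ as $\eps\to0$, uniformly for $n>1/\eps$. The plan is to pass from total variation to relative entropy and then exploit the chain rule. First I would upgrade the hypothesis to the information-theoretic bound $\Erw_{i,j}\brk{D_{\mathrm{KL}}(\mu_{i,j}\,\|\,\mu_i\tensor\mu_j)}\le\eta(\eps)$ with $\eta(\eps)\to0$; this is not automatic, since relative entropy can dwarf total variation when marginals are near-degenerate, and I would handle it by truncating each marginal $\mu_i$ to its effective support $\cbc{\sigma:\mu_i(\sigma)\ge\tau}$. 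Truncation perturbs the relevant distributions by $O(|\Omega|\tau)$ in total variation; on the truncated part one bounds the relative-entropy contribution by $O(\tau^{-2})\TV{\mu_{i,j}-\mu_i\tensor\mu_j}$ (using $\log x\le x-1$ and that both marginals are now $\ge\tau$), and the complementary contribution by $O(|\Omega|^2\tau\log(1/\tau))$; letting $\tau=\tau(\eps)\to0$ slowly does it.

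Next I would invoke the chain rule for relative entropy together with Pinsker's inequality. For any ordering of an $l$-set, $D_{\mathrm{KL}}(\mu_{i_1,\ldots,i_l}\,\|\,\mu_{i_1}\tensor\cdots\tensor\mu_{i_l})=\sum_{t=2}^{l}I_\mu(X_{i_t};X_{i_1},\ldots,X_{i_{t-1}})$, and a second application of the chain rule rewrites each summand as a telescoping sum of conditional mutual informations. Averaging over the $l$-set and over all orderings yields $S_l(\mu)^2\le\tfrac{l-1}{2}\sum_{r=0}^{l-2}\Phi_r$, where $\Phi_r=\Erw\brk{I_\mu(X_a;X_b\mid X_U)}$, the expectation being over uniformly random distinct coordinates $a,b$ and a uniformly random $r$-set $U$ disjoint from $\cbc{a,b}$. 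By construction $\Phi_0=\Erw_{a,b}\brk{I_\mu(X_a;X_b)}$, which is exactly the quantity controlled in the first step. Thus the whole argument comes down to bounding $\Phi_1,\ldots,\Phi_{l-2}$.

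Bounding $\Phi_r$ for $1\le r\le l-2$ is, I expect, the main obstacle, and it is a genuinely global phenomenon: pairwise near-independence does not force joint near-independence for an individual tuple, and correspondingly every attempt to bound $\Phi_r$ in terms of $\Phi_0$ by a pointwise information identity collapses to a tautology (for instance $I_\mu(X_a;X_b\mid X_u)=I_\mu(X_a;X_b)+I_\mu(X_a;X_u\mid X_b)-I_\mu(X_a;X_u)$ reshuffles into $\Phi_1=\Phi_0+\Phi_1-\Phi_0$ after averaging). The right statement is instead that $(\eps,2)$-symmetry is \emph{stable under bounded pinning}: for a uniformly random $r$-set $U$ with $r\le l-2$ and $\hat\sigma$ drawn from $\mu$, the conditional measure $\mu[\,\cdot\,|\,X_U=\hat\sigma_U]$ is $(\eps',2)$-symmetric in expectation with $\eps'=\eps'(\eps,r)\to0$ as $\eps\to0$. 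Granting this, $\Phi_r$ is the $\Phi_0$-quantity of the pinned measure, the first step applied to $\mu[\,\cdot\,|\,X_U=\hat\sigma_U]$ bounds it, and an induction on $l$ closes the loop. Establishing stability under pinning is exactly where the pinning machinery of~\cite{Victor} enters: one tracks the monotone potential $R_t=\Erw\brk{H_\mu(X_J\mid X_{U_t})}$ (with $J$ a random coordinate and $U_t$ a random $t$-set), whose total decrease along $t=0,1,2,\ldots$ is at most $\log|\Omega|$ since $R_0-R_t=\sum_{r<t}\Phi_r$, and one combines this global budget with the first-step estimate to control the early decrements $R_{t-1}-R_t=\Phi_{t-1}$; making this precise is the technical heart of the argument. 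Feeding the resulting bounds $\Phi_r=o_\eps(1)$ back into $S_l(\mu)^2\le\tfrac{l-1}{2}\sum_{r\le l-2}\Phi_r$ and applying Pinsker once more gives $S_l(\mu)=o_\eps(1)$, hence $(\delta,l)$-symmetry for any prescribed $\delta$ once $\eps$ is chosen small enough depending on $\delta$, $l$ and $|\Omega|$.
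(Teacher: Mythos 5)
The paper does not actually prove this lemma; it is imported verbatim from \cite[Corollaries~2.3 and 2.4]{Victor}, so your attempt can only be measured against the argument in that reference and on its own merits. Your first two steps are sound. Converting $(\eps,2)$-symmetry into a bound on $\Erw_{a,b}\brk{I(X_a;X_b)}$ is legitimate for finite $\Omega$ (a Fannes-type continuity bound for entropy gives it more directly than your truncation, but both work, at the price of a $\log(1/\eps)$ loss), and the chain-rule/Pinsker reduction of $(\delta,l)$-symmetry to bounding the averaged conditional mutual informations $\Phi_r=\Erw\brk{I(X_a;X_b\mid X_U)}$ for $|U|=r\leq l-2$ is correct; your inequality $S_l(\mu)^2\leq\frac{l-1}{2}\sum_{r\leq l-2}\Phi_r$ checks out.

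The gap sits in the only step that carries real content, namely bounding $\Phi_1,\ldots,\Phi_{l-2}$. You propose to deduce these bounds from the entropy budget $\sum_{r=0}^{n-2}\Phi_r\leq\log|\Omega|$ combined with the hypothesis $\Phi_0\leq\eta(\eps)$. That combination cannot work as stated: the budget controls only the \emph{sum} of the $\Phi_r$, hence only their minimum or their average over a long window, and the sequence $r\mapsto\Phi_r$ is not monotone (conditioning can increase mutual information), so knowing that $\Phi_0$ is small and that the total is at most $\log|\Omega|$ yields nothing better than the trivial bound $\Phi_1\leq\log|\Omega|$. This is precisely why pinning lemmas --- including Lemma~\ref{Lemma_0pinning} of this paper --- are stated for a \emph{uniformly random} pinning depth $\vec\theta\in[T]$ rather than a fixed one: the budget argument only certifies that most depths are good. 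Your ``stability under bounded pinning'' claim, i.e.\ that pinning a fixed number $r\leq l-2$ of random coordinates to a sample preserves $(\eps,2)$-symmetry on average, is true, but it is essentially equivalent to the lemma being proved: every direct attack on it (for instance writing the pinned pair marginal as $\mu_{j,k,u}(\,\cdot\,,\,\cdot\,,\hat\sigma_u)/\mu_u(\hat\sigma_u)$) runs straight into the higher-order marginals one is trying to control, and you yourself observe that the pointwise information identities collapse to tautologies. Asserting this stability and deferring to ``the pinning machinery of \cite{Victor}'' is therefore circular: the unconditional pinning lemma does not deliver it, and the conditional version is exactly the missing ingredient. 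The proposal correctly locates the crux but does not resolve it, so it does not constitute a proof.
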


\section{The proof strategy}\label{Sec_2}

\noindent
In this section we outline the proof of the main results.
In contrast to prior work~\cite{Dietzfelbinger,DuboisMandler,GoerdtFalke}, we do not proceed by calculating the
	 second moment of the number $Z(\vA,\vy)$ of solutions to the linear system.
Instead, we pursue an approach inspired by coding theory.
Nonetheless, it is instructive to take a glimpse at the perils of the second moment approach first.

\subsection{Avoiding the second moment method}\label{Sec_techniques}
Why is the second moment argument for $k$-XORSAT, let alone for random linear equations over $\FF_q$, so tricky?
Actually it is easy enough to derive the explicit formula for the second moment for $q=2$ (i.e., for random $k$-XORSAT).
For starters, the first moment works out to be
	\begin{align}\label{eqvanilla1st}
	\Erw[Z(\vA,\vy)|\vm]=\sum_{x\in\FF_2^n}\pr\brk{\vA x=\vy|\vm}=2^{n-\vm};
	\end{align}
for there are $2^n$ candidate solutions $x$ and each satisfies $\vA x=\vy$ with probability $2^{-\vm}$  because $\vy$ is chosen  independently of $\vA$.
Further, the second moment is just the expected number of pairs of solutions.
Hence, we can easily write an explicit expression for the second moment as well because
 the probability that both $x,x'\in\FF_2^n$ satisfy the linear system depends on the number $s$ of entries 
on which the two vectors agree  only.
In fact, a simple inclusion/exclusion argument shows that the probability is equal to $[(1+\bc{(2s/n)-1}^k)/4]^{\vm}$.
Since the number of pairs $x,x'$  that agree on $s$ coordinates equals $2^n\bink ns$, we obtain
	\begin{align}\label{eqsmm}
	\Erw[Z(\vA,\vy)^2|\vm]=\sum_{x,x'\in\FF_2^n}\pr\brk{\vA x=\vA x'=\vy|\vm}=\sum_{s=0}^n2^n\bink ns\bcfr{1+\bc{(2s/n)-1}^k}4^{\vm}.
	\end{align}
Ideally we would like to show that $\Erw[Z(\vA,\vy)^2|\vm]\sim\Erw[Z(\vA,\vy)|\vm]^2$.
Then Chebyshev's inequality would reveal that $Z(\vA,\vy)\sim\Erw[Z(\vA,\vy)|\vm]=2^{n-\vm}$ \whp, whence $\vA$ has rank $\vm$ \whp

\begin{figure}
\includegraphics[height=4cm]{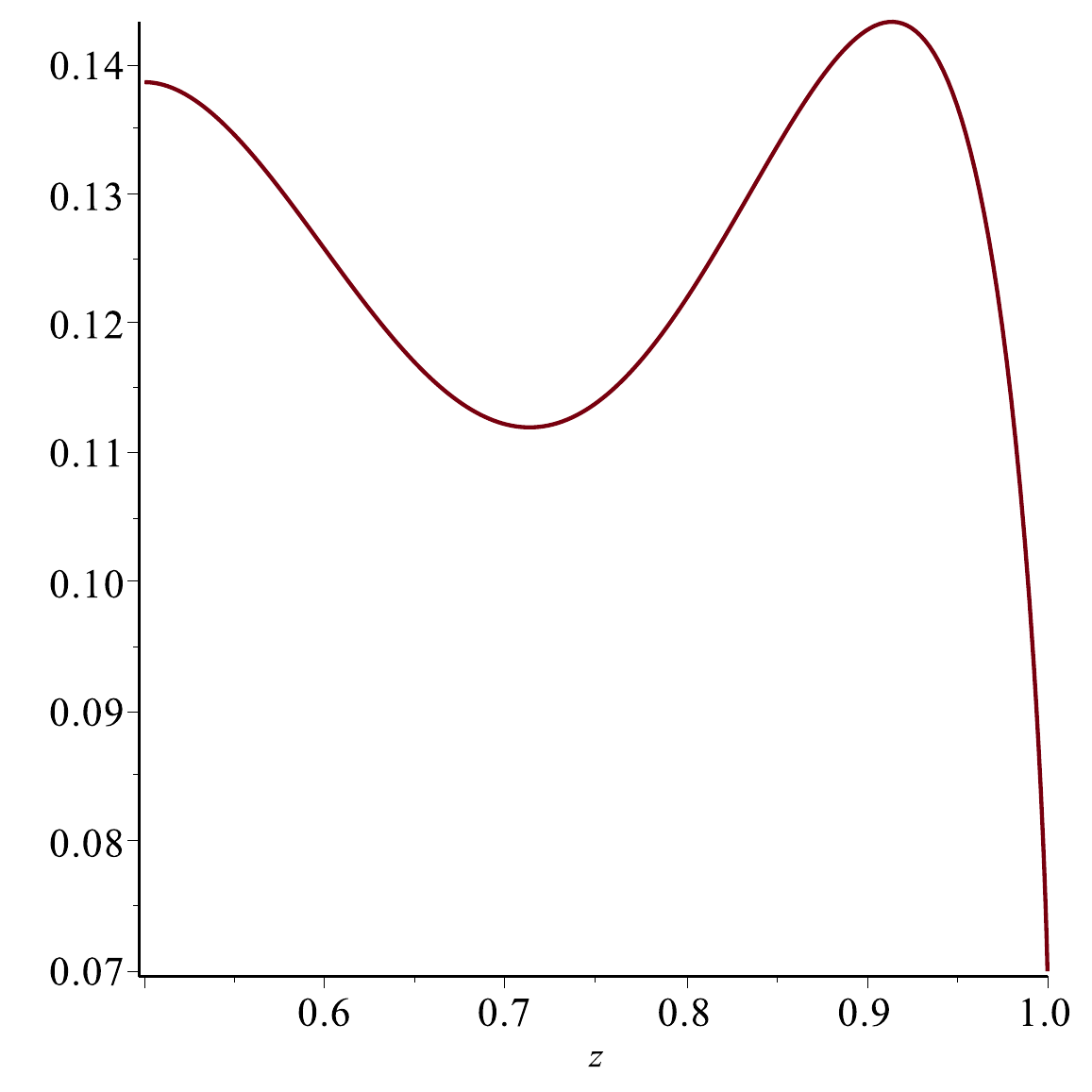}
\caption{The function from (\ref{eqlogsmm}) for $k=3$ and $d=2.7$ for $1/2\leq z\leq1$.
	The maximum is attained at $z\approx0.91$ rather than $z=1/2$.}\label{Fig_XORSATplot}
\end{figure}

A bit of calculus shows that $\Erw[Z(\vA,\vy)^2|\vm]\sim\Erw[Z(\vA,\vy)|\vm]^2$ iff the sum in (\ref{eqsmm})  is dominated by the contribution from the terms $s\sim n/2$.
This is indeed the case for small values of $\vm$.
But for $d$ below but near the satisfiability threshold significantly larger values of $s$ take over.
To see this, we take logarithms to obtain from (\ref{eqsmm}) that for $\vm\sim dn/k$, with the transformation $z=s/n$,
	\begin{align}\label{eqlogsmm}
	\frac1n\ln\Erw[Z(\vA,\vy)^2|\vm]=\max_{z\in[0,1]}-z\ln z-(1-z)\ln(1-z)+\frac{d}k\ln(1+(2z-1)^k)+(1-2d/k)\ln 2+o(1).
	\end{align}
Hence, the second moment argument can succeed only if the maximum 
is attained at $z=1/2$.
But for example for $k=3$ this ceases to be true for $d>2.7$, while the actual satisfiability threshold is $d_3\approx2.75$; see Figure~\ref{Fig_XORSATplot}.

The explanation is that the number $Z(\vA,\vy)$ of solutions is subject to a lottery effect:
the second moment is driven up by a minority of solution-rich linear systems.
Combinatorially this is due to large deviations of 
the size of the reduced linear system $\vA_*x_*=\vy_*$ obtained via the peeling process.
Indeed, the hump near $0.91$ in Figure~\ref{Fig_XORSATplot} stems from linear systems whose reduced system has a solution but where the $2$-core of the hypergraph $G(\vA)$ is much denser than expected.
In effect, even a single solution to the reduced system may extend in more than $2^{n-\vm}$ ways to a solution of the entire system $\vA x=\vy$, causing a very large contribution to the second moment.

Dubois and Mandler~\cite{DuboisMandler} and the subsequent contributions~\cite{Dietzfelbinger2,PittelSorkin} applied the obvious remedy,
which is to condition on the number of rows and columns of the reduced system.
But unfortunately this conditioning leaves us with a second moment formula {\em far} more complicated than (\ref{eqsmm}).
Among other things, it now becomes necessary to not just count how many variables take the same values under $x,x'$ but also 
how many non-zero entries there are in the  corresponding columns.
Unsurprisingly, showing that the dominant contribution still comes from $x,x'$ that agree on $s\sim n/2$ entries right up to the actual threshold $d_k$ is anything but straightforward.

In fact, as highlighted by the technically impressive work of Falke and Goerdt~\cite{GoerdtFalke}, these difficulties are exacerbated significantly if $q>2$.
For instead of having to optimise a single parameter as in the case $q=2$,
we now end up with an intricate non-convex $(q-1)^2$-dimensional optimisation problem that asks for the $q\times q$-overlap matrix rendering the largest contribution to the second moment.
As underscored by work on random graph/hypergraph colouring problems~\cite{AchNaor,DFG},
tackling such high-dimensional optimisation tasks is notoriously difficult.

\subsection{From inference to satisfiability}\label{Sec_strategy}
We therefore pursue a very different proof strategy.
We approach the satisfiability threshold problem from the viewpoint of an inference or decoding task.
More specifically, we think of the matrix $\vA$ as the generator matrix of a linear code.
Thus, the encoding of a `message' $x\in\FF_q^n$ is simply the vector $\vhy=\vA x$.
Normally in a coding setting $\vhy$ would get transmitted along a noisy channel and we would ask how many bits of $x$ it is possible to reconstruct from the noisy observation.
In fact, this is precisely the bitwise maximum a-posteriori inference problem for low-density generator matrix codes~\cite{KabashimaSaad,McEliece}.

The present proof strategy rests on the observation that the satisfiability threshold problem boils down to the LDGM decoding problem {\em without} noise.
Hence, we ask how much information about $x$ it is possible to reconstruct from the vector $\hat y=\vA x$ itself.
Clearly, each entry $\hat y_i$ is a sparse observation of the vector $x$, namely, the inner product of $x$ and a random vector with $k$ non-zero entries.
Consequently, if $d$ is quite small,  then  the observations $\hat y$ will be very sparse and very dispersed and in effect we cannot hope to recover $x$ well.
But as the number of rows increases, i.e., as we acquire more observations of $x$, there will eventually occur a threshold from where
the recovery of an extensive fraction of bits becomes information-theoretically possible. That is, after the information-theoretic threshold, there exists an absolute constant $\eps>0$ such that with probability tending to 1 as $n$ goes to infinity, at least $\eps n$ coordinates of $x$ can be recovered correctly given $\hat y$.
We shall see that this threshold can be determined precisely, and that it coincides with the satisfiability threshold.
In summary, our task comes down to identifying the {\em exact} LDGM decoding threshold in the absence of noise.

Motivated by data compression problems, Wainwright et al.~\cite{WMM} studied this decoding task in the case $q=2$.
They also noticed the connection with the $k$-XORSAT threshold.
However, reliant on the second moment method, their analysis was not tight and 
did not yield the precise $k$-XORSAT threshold.
Moreover, the LDGM decoding problem {\em with noise} was recently studied precisely~\cite{CKPZ}.
We are going to seize upon certain ideas from that proof, particularly the use of the `Aizenman-Sims-Starr scheme'.
However, we cannot follow the LDGM argument directly for two reasons.
First, that proof critically depends on a positive noise parameter to turn the parity constraints into `soft'  constraints.
Second, even if the approach from~\cite{CKPZ} would work with zero noise, the resulting formula 
for the satisfiability threshold would come as an infinite-dimensional optimisation problem over the space of {all} probability measures on the $(q-1)$-dimensional simplex $\cP(\FF_q)$.
Proving that the optimal solution matches the innocuous expression from \Thm~\ref{Thm_SAT} appears to be a  formidable task.

On the positive side, the `zero noise' decoding problem possesses algebraic structure.
Diligently exploiting this will allow us to answer the decoding question precisely in a fairly elegant, transparent manner,
without having to engage high or even infinite-dimensional optimisation problems.
The following paragraphs outline the proof.

\subsubsection{The Nishimori identity}
Let $\vhx\in\FF_q^n$ be a uniformly random vector, chosen independently of $\vA$, and let $\vhy=\vA\vhx$.
We aim to figure out how many entries of $\vhx$ it is possible to reconstruct from $\vA$ and $\vhy$.
As a first step, what is the posterior distribution of $\vhx$ given $\vA$ and $\vhy$?

\begin{lemma}\label{Fact_Nishimori}
\begin{enumerate}
\item Given $\vA,\vhy$ the vector $\vhx$ is uniformly distributed over the set $\cS(\vA,\vhy)$.
\item Given $\vA$ the vector $\vhy$ is distributed uniformly over the set $\Im(\vA)$.
\item The joint distribution of $\vA,\vhy$ satisfies 
	\begin{align}\label{eqNishimori}
	\pr\brk{\vA=A,\vhy=y|\vm=m}&=
		\frac{Z(A,y)}{\Erw[Z(\vA,\vy)\mid\vm=m]}\cdot {q^{-\vm}}\,\pr\brk{\vA=A|\vm=m}
		&\mbox{for any }A\in\FF_q^{m\times n},\ y\in\FF_q^m.
	\end{align}
\end{enumerate}
\end{lemma}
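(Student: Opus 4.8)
The plan is to verify the three claims of Lemma~\ref{Fact_Nishimori} by a direct application of Bayes' rule, exploiting the fact that $\vhx$ is chosen uniformly at random and independently of $\vA$. First I would fix the matrix $\vA=A$ (and implicitly condition on $\vm=m$ throughout). Since $\vhx$ is uniform on $\FF_q^n$ independently of $A$, the vector $\vhy=A\vhx$ is supported on $\Im(A)$, and for any $y\in\Im(A)$ the set of messages producing it is the coset $\cS(A,y)$, which has size $Z(A,y)=q^{\nul(A)}$ by \eqref{eqZnullity} — in particular this cardinality does \emph{not} depend on $y$ as long as $y\in\Im(A)$. Hence $\pr[\vhy=y\mid\vA=A]=Z(A,y)/q^n=q^{-\rk(A)}$ for $y\in\Im(A)$ and $0$ otherwise; this is exactly the uniform distribution on $\Im(A)$, giving (2). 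For (1), Bayes' rule gives, for any $x\in\FF_q^n$,
\[
\pr[\vhx=x\mid\vA=A,\vhy=y]=\frac{\pr[\vhx=x\mid\vA=A]\,\vecone\{Ax=y\}}{\pr[\vhy=y\mid\vA=A]}=\frac{q^{-n}\,\vecone\{Ax=y\}}{q^{-\rk(A)}\vecone\{y\in\Im(A)\}},
\]
which equals $1/Z(A,y)$ whenever $x\in\cS(A,y)$ and $0$ otherwise; that is precisely the uniform distribution on $\cS(A,y)$, proving (1).

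For (3), I would compute the joint law of $(\vA,\vhy)$ by multiplying the conditional law of $\vhy$ from (2) with the marginal of $\vA$:
\[
\pr[\vA=A,\vhy=y\mid\vm=m]=\pr[\vhy=y\mid\vA=A,\vm=m]\,\pr[\vA=A\mid\vm=m]=\frac{Z(A,y)}{q^n}\,\pr[\vA=A\mid\vm=m].
\]
It then remains to identify the normalising constant. By \eqref{eqvanilla1st}-style reasoning — or simply by averaging the displayed identity over all $(A,y)$ and using that $\sum_{A}\pr[\vA=A\mid\vm=m]\sum_{y}Z(A,y)=\sum_A \pr[\vA=A\mid\vm=m]\,q^n\cdot q^{-m}\cdot q^{m}$, more cleanly $\Erw[Z(\vA,\vy)\mid\vm=m]=\sum_{x\in\FF_q^n}\pr[\vA x=\vy\mid\vm=m]=q^n\cdot q^{-m}$ since each fixed $x$ satisfies $\vA x=\vy$ with probability $q^{-m}$ as $\vy$ is independent and uniform — one gets $q^n=q^m\,\Erw[Z(\vA,\vy)\mid\vm=m]$, so $q^{-n}=q^{-m}/\Erw[Z(\vA,\vy)\mid\vm=m]$. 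Substituting yields exactly \eqref{eqNishimori}.

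There is no real obstacle here; the lemma is essentially a bookkeeping exercise, and the only point requiring a moment's care is to keep straight which quantities depend on $y$ and which do not — specifically that $Z(A,y)$ is constant in $y$ over $\Im(A)$ and vanishes off it, and that the expectation $\Erw[Z(\vA,\vy)\mid\vm=m]=q^{n-m}$ furnishes precisely the normalisation needed. I would present the argument in the order (2) then (1) then (3), since (2) supplies the conditional law of $\vhy$ that the other two parts reuse. An alternative, equally short route to (3) is to write the joint density directly as $\pr[\vhx=x,\vA=A,\vhy=y]=q^{-n}\vecone\{Ax=y\}\pr[\vA=A]$ and sum over $x$; I would mention this only if it shortens the write-up.
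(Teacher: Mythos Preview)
Your proposal is correct and essentially coincides with the paper's argument: both compute $\pr[\vA=A,\vhy=y\mid\vm]=q^{-n}Z(A,y)\pr[\vA=A\mid\vm]$ and then read off (1)--(3) via Bayes' rule and the observation that $Z(A,y)$ is constant on $\Im(A)$. The only difference is the order of presentation---the paper derives \eqref{eqNishimori} first (via your ``alternative route'' of summing over $x$) and deduces (1) and (2) from it, whereas you establish (2) first and use it to get (1) and (3); the substance is identical.
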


Thus, the probability that the experiment of first choosing $\vA$, independently choosing $\vhx$ and setting $\vhy=\vA\vhx$ produces the linear system $(A,y)$ is proportional to its number $Z(A,y)$ of solutions.
In other words, the distribution $(\vA,\vhy)$ is obtained by `size-biasing' the distribution $(\vA,\vy)$ according to the number of solutions.
Following~\cite{CKPZ}, we call (\ref{eqNishimori}) the `Nishimori identity'.%
	\footnote{Strictly speaking, in statistical physics jargon the term `Nishimori property' refers a weaker identity
		that pertains to averages on $\vA$ and independent random solutions $\vec x^{(1)},\ldots,\vec x^{(\ell)}$ to
		the random linear system $\vA x=\vhy$.
		This weaker Nishimori identity follows immediately from \Lem~\ref{Fact_Nishimori}.}
Such identities have come to play a major role in the study of inference tasks such as decoding~\cite{CKPZ}.
The simple proof of \Lem~\ref{Fact_Nishimori} can be found in \Sec~\ref{Sec_Fact_Nishimori}.

For what values of $d$ can we hope to recover a non-trivial approximation to $\vhx$ from the linear system $\vA,\vhy$?
Since by \Lem~\ref{Fact_Nishimori} given $\vA,\vhy$ our best guess as to $\vhx$ is a uniformly random solution of the linear system, we cannot hope to reconstruct $\vhx$ if the linear system possesses a large number of `uncorrelated' solutions.
More formally, if the overlap $\omega(\vx,\vx')$ of two random solutions $\vx,\vx'$ is close to the uniform overlap $\bar\omega$, then the linear system $\vA,\vhy$ possesses an abundance of solutions, well spread over the entire space $\FF_q^n$, and $\vhx$ may be any one of them.
Thus, reconstructing $\vhx$ will be impossible if $d>0$ is such that
	\begin{align}\label{eqOverlap1}
	\lim_{n\to\infty}\Erw\brk{\bck{\tv{\omega(\vec x,\vec x')-\bar\omega}}_{\vA,\vhy}}&=0,
	\end{align}
	recalling that $\bck{\cdot}_{\vA,\vhy}$ denotes the average over uniform and independent solutions $\vx, \vx'$, given $\vA$ and $\vhy$.
If, on the other hand, $\tv{\omega(\vec x,\vec x')-\bar\omega}$ is bounded away from $0$, then the solutions to the linear system cannot be spread uniformly. Using correlations between solutions, a substantial share of the entries of $\vhx$ can be recovered by observing an arbitrary solution $x$. Our Theorem~\ref{Thm_overlap} implies that~(\ref{eqOverlap1}) holds all the way until the satisfiability threshold $d_k$. For $d>d_k$, the reduced linear system $(\vA_*,\vy_*)$ has more equations than variables by Theorem~\ref{Thm_Mike} and thus has a unique solution with high probability. Hence, we can recover all entries of $\vhx$ lying in the reduced system. 

How is this inference problem related to the satisfiability threshold of the linear system $\vA x=\vy$?
The whole problem with the `vanilla' second moment approach is to identify the overlap value that renders the dominant contribution.
But due to the Nishimori identity~(\ref{eqNishimori}), the {\em a priori} overlap estimate (\ref{eqOverlap1}) allows us to sidestep this issue completely by setting up a 
truncated second moment calculation where we only count pairs of solutions with an overlap close to $\bar\omega$.
Interestingly, to make this argument work we do not need to do anything `clever' (like referring to a reduced linear system or the $2$-core) at all, 
rather this truncated second moment calculation just comes down to a straightforward application of the Laplace method.
We will carry this out in detail in \Sec~\ref{Sec_Prop_smm}, where we establish the following.

\begin{proposition}\label{Prop_smm}
Assume that (\ref{eqOverlap1}) is satisfied for $d>0$.
Then the linear system $\vA x=\vy$ has a solution with high probability and thus
	$\lim_{n\to\infty}\pr\brk{\rk(\vA)=\vm}=1.$
\end{proposition}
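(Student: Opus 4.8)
The plan is to run a \emph{truncated} second moment computation on the number of solutions lying in a small overlap neighbourhood of $\bar\omega$, and then to use the Nishimori identity (\ref{eqNishimori}) together with the hypothesis (\ref{eqOverlap1}) to show that this truncated count captures almost all the mass of $Z(\vA,\vy)$. Concretely, fix a small $\eps>0$ and set
\[
Z_\eps(\vA,\vy)=\sum_{x,x'\in\cS(\vA,\vy)}\vecone\{\tv{\omega(x,x')-\bar\omega}<\eps\},\qquad
Y_\eps(\vA,\vy)=\sum_{x\in\cS(\vA,\vy)}\vecone\Big\{\tv{\tfrac1n\textstyle\sum_i\atom_{x_i}-\tfrac1q\vecone}<\eps\Big\},
\]
the number of $\eps$-balanced pairs of solutions and the number of $\eps$-balanced solutions. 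First I would compute $\Erw[Z(\vA,\vy)\mid\vm]=q^{n-\vm}$ exactly as in (\ref{eqvanilla1st}) (this is immediate since $\vy$ is independent of $\vA$), and then estimate $\Erw[Z_\eps(\vA,\vy)\mid\vm]$ by summing over overlap matrices $\omega$ within $\eps$ of $\bar\omega$: the number of pairs $(x,x')$ realising a given $\omega$ is $\binom{n}{\omega n}$ (a multinomial coefficient), and, because $P$ is permutation-invariant and $\vy$ is uniform, the probability that a fixed such pair satisfies all $\vm$ equations is $q^{-\vm}$ times a factor depending only on $\omega$ through the distribution of $\sum_h a_h(x_{\alpha_h}-x'_{\alpha_h})$ over the random support and the random coefficients drawn from $P$. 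Taking $\frac1n\log$ and applying the Laplace method, one checks that the exponent is a smooth function of $\omega$ that is \emph{strictly locally maximised at $\omega=\bar\omega$} — this is the key analytic point and the only place real calculus enters; here the uniform value contributes $2n\log q$ in the entropy and $-2\frac dk\log q$ in the constraint term, matching $\Erw[Z(\vA,\vy)\mid\vm]^2=q^{2(n-\vm)}$. A parallel (one-sided) Laplace computation gives $\Erw[Y_\eps(\vA,\vy)\mid\vm]=(1+o(1))q^{n-\vm}$, i.e. balanced solutions already account for essentially the whole first moment.

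The second step converts these moment bounds into the statement. By construction $\Erw[Z_\eps\mid\vm]\le(1+o(1))q^{2(n-\vm)}=(1+o(1))\Erw[Z\mid\vm]^2$ provided $\eps$ is small enough that the Laplace maximum is unique; combined with the lower bound $\Erw[Z_\eps\mid\vm]\ge\Erw[Y_\eps\mid\vm]^2/$(number of overlap classes)? — more simply, $Z_\eps\ge Y_\eps^2\cdot\vecone\{\cdots\}$ fails directly, so instead I would argue as follows. A balanced pair of balanced solutions has overlap automatically within $O(\eps)$ of $\bar\omega$, so $Y_\eps^2\le$ (a sum that is comparable to) $Z_{O(\eps)}$, whence Cauchy--Schwarz gives $\Pr[Y_\eps>0\mid\vm]\ge\Erw[Y_\eps\mid\vm]^2/\Erw[Z_{O(\eps)}\mid\vm]\ge 1-o(1)$. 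Therefore $\vA x=\vy$ has a solution w.h.p. To upgrade this to $\rk(\vA)=\vm$ w.h.p., I would invoke the Nishimori identity: by \Lem~\ref{Fact_Nishimori}(2) we have $\Pr[\vhy\in\Im(\vA)\mid\vA]=1$ always, and by (\ref{eqNishimori}) the law of $(\vA,\vhy)$ is that of $(\vA,\vy)$ size-biased by $Z(\vA,\vy)$; meanwhile $\Pr[\exists x:\vA x=\vy\mid\vm]=\Pr[\vy\in\Im(\vA)\mid\vm]=\Erw[q^{\rk(\vA)-\vm}\mid\vm]$. Since $\rk(\vA)\le\vm$ deterministically, $q^{\rk(\vA)-\vm}\le1$, and we have just shown this quantity tends to $1$; hence $q^{\rk(\vA)-\vm}\to1$ in $L^1$, so $\rk(\vA)=\vm$ w.h.p. (Equivalently: conditional on $\vy\in\Im(\vA)$ one has $Z(\vA,\vy)=q^{n-\rk(\vA)}$, and the first/second moment estimates pin $Z(\vA,\vy)$ at $q^{n-\vm}(1+o(1))$, forcing $\rk(\vA)=\vm$.)

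The main obstacle I anticipate is the analytic heart of the first step: proving that $\bar\omega$ is the \emph{unique} local maximiser of the first-moment exponent on a neighbourhood, uniformly over all permutation-invariant $P$ and all prime powers $q$. One must write the constraint-density term as $\frac dk\log\big(q^{-1}\sum_{\sigma\in\FF_q}\Erw_P[\text{something}(\omega,a,\sigma)]\big)$ and verify that its Hessian at $\bar\omega$, added to the (negative definite) Hessian of the entropy $-\sum_{\sigma,\tau}\omega_{\sigma,\tau}\log\omega_{\sigma,\tau}$, is negative definite — this is exactly the computation that fails \emph{globally} (Figure~\ref{Fig_XORSATplot}) but, crucially, succeeds \emph{locally} at $\bar\omega$ for every $q$, which is what makes the truncated argument go through where the vanilla one does not. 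I would handle this by diagonalising with respect to the characters of $\FF_q$ (or, for the general-$P$ case, reducing to the observation that perturbing $\omega$ away from uniform strictly decreases entropy at second order while the constraint term's second-order change is controlled by $k\ge3$ and vanishing first moments), so that the entropy term dominates in the small ball; no appeal to the $2$-core is needed.
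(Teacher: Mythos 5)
Your overall architecture --- truncate the second moment near the uniform overlap, verify by the Laplace method that $\bar\omega$ is a strict local maximiser of the exponent, and recover the rank statement from $\pr[\vy\in\Im(\vA)\mid\vA]=q^{\rk(\vA)-\vm}\le 1$ --- matches the paper, and the local analysis you sketch (the first- and second-order terms of the constraint exponent degenerate on $\{\zeta\perp\vecone\}$, so the entropy Hessian dominates) is exactly what is carried out in Claims~\ref{Claim_derivative1} and~\ref{Claim_derivative2}. The gap is in how you produce the matching \emph{lower} bound on the truncated first moment, which is the one place where the hypothesis (\ref{eqOverlap1}) must enter. You truncate on solutions that are individually $\eps$-balanced (your $Y_\eps$) and assert that ``a balanced pair of balanced solutions has overlap automatically within $O(\eps)$ of $\bar\omega$'', so that $Y_\eps^2$ is dominated by $Z_{O(\eps)}$. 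That assertion is false: take $x'=x$ with $x$ balanced; each marginal is within $\eps$ of uniform, yet $\omega(x,x')$ is supported on the diagonal, at total variation distance $1-1/q$ from $\bar\omega$. More generally, individual balancedness places no constraint on the joint empirical distribution, and pairs with diagonal-heavy overlap are precisely the ones that make the vanilla second moment (\ref{eqsmm}) blow up near $d_k$. Hence $\Erw[Y_\eps^2\mid\vm]$ is not controlled by $\Erw[\fZ_{O(\eps)}\mid\vm]$, and your Cauchy--Schwarz step does not close.

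The paper's truncation is different in a way that is essential: it sets $\cZ_\eps(A,y)=\vecone\{\bck{\tv{\omega(\vx,\vx')-\bar\omega}}_{A,y}<\eps\}\cdot Z(A,y)$, i.e.\ it keeps or discards the \emph{whole linear system} according to whether its Gibbs-average pair overlap concentrates. With this choice $\cZ_\eps^2\leq\fZ_{\sqrt\eps}/(1-\sqrt\eps)$ holds deterministically (Claim~\ref{Claim_EZ2_1}), because on the truncation event almost all pairs of solutions really do have overlap near $\bar\omega$; and the first-moment lower bound $\Erw[\cZ_\eps\mid\vm]\geq(1-\eps)q^{n-\vm}$ follows from the Nishimori identity (\ref{eqNishimori}), which turns the size-biased expectation into the probability that the planted system $(\vA,\vhy)$ satisfies the overlap condition --- exactly the hypothesis (\ref{eqOverlap1}) (\Lem~\ref{Lemma_EZ}). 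You announce this plan in your opening sentence but do not execute it; to repair the proof, replace $Y_\eps$ by the system-level truncation and obtain the first-moment lower bound from Nishimori rather than from a separate Laplace computation.
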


\noindent
A second moment argument for an overlap-truncated variable was previously applied to a different family of models in~\cite{CEJKK}.
But there the method had to be combined with the small subgraph conditioning technique from~\cite{RobinsonWormald}.
Here matters are simpler, and a plain application of the Laplace method suffices, thanks to the algebraic nature of the problem.

\subsubsection{Random homogeneous systems}
Hence, we need to show that (\ref{eqOverlap1}) is valid for all $d<d_k$.
The starting point for this is provided by the following general fact concerning the uniform distribution on solutions to an arbitrary (not-necessarily-random) homogeneous linear system, whose
 proof can be found in \Sec~\ref{Sec_kernel}.

\begin{lemma}\label{Prop_kernel}
For any $A\in\FF_q^{m\times n}$
 there exists a decomposition $S_0(A), S_1(A),\ldots, S_{n}(A)$ of $[n]$ into pairwise disjoint (possibly empty) sets,
 unique up to a permutation of the classes $S_1(A),\ldots, S_{n}(A)$, with the following properties.
	\begin{enumerate}[(i)]
	\item For every $i\in S_0(A)$we have $\bck{\vecone\{\vx_i=0\}}_{A,0}=1$.
	\item If $i,j\in S_u(A)$ for some $u\geq1$, then
		$\bck{\vecone\{\vx_i=\sigma\}}_{A,0}=1/q$ for every $\sigma\in\FF_q$ and
		 there exists $s\in\FF_q^*$ such that 
		$$\bck{\vecone\{\vx_j=s\vx_i\}}_{A,0}=1.$$
	\item If $i\in S_u(A)$, $j\in S_v(A)$ for indices {$1\leq u<v  \leq n$}, then  for all $\sigma,\tau\in\FF_q$ we have
			$\bck{\vecone\{\vx_i=\sigma,\vx_j=\tau\}}_{A,0}=1/q^2$.
	\end{enumerate}
\end{lemma}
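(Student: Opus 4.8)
The plan is to read off the decomposition directly from the geometry of the kernel. Write $V=\cS(A,0)=\ker(A)\subseteq\FF_q^n$, so that the averages $\bck{\nix}_{A,0}$ are with respect to a uniformly random $\vx\in V$ (note $V\ni 0$, so $V\neq\emptyset$ and the uniform measure is well defined). For a single coordinate $i\in[n]$ the projection $\pi_i\colon V\to\FF_q$, $x\mapsto x_i$, is $\FF_q$-linear, so its image is either $\{0\}$ or all of $\FF_q$. I set $S_0(A)=\{i\in[n]:x_i=0\text{ for all }x\in V\}$, which gives property~(i). For $i\notin S_0(A)$ the map $\pi_i$ is onto, and a surjective $\FF_q$-linear map between finite-dimensional spaces has equal-size fibres (cosets of its kernel), hence pushes the uniform distribution on $V$ to the uniform distribution on $\FF_q$; thus $\bck{\vecone\{\vx_i=\sigma\}}_{A,0}=1/q$ for every $\sigma\in\FF_q$, which is the first assertion of~(ii).

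Next I introduce on $[n]\setminus S_0(A)$ the relation $i\sim j$ iff there is $s\in\FF_q^*$ with $x_j=sx_i$ for every $x\in V$. The one genuinely algebraic step is to classify the image $W\subseteq\FF_q^2$ of the pair projection $\pi_{ij}\colon V\to\FF_q^2$ for $i,j\notin S_0(A)$: $W$ is a subspace that surjects onto both coordinates, so $\dim W\in\{1,2\}$; if $\dim W=2$ then $W=\FF_q^2$ and $(x_i,x_j)$ is uniform on $\FF_q^2$, while if $\dim W=1$ then $W$ is a line meeting the coordinate axes only at the origin, hence $W=\{(t,st):t\in\FF_q\}$ for a unique $s\in\FF_q^*$, i.e.\ $i\sim j$. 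So $i\sim j$ is precisely the event that $\pi_{ij}$ degenerates, and $\sim$ is an equivalence relation (reflexive via $s=1$, symmetric via $s\mapsto s^{-1}$, transitive by composing the scalars). I then let $S_1(A),S_2(A),\dots$ be the equivalence classes of $\sim$, padded with empty sets so there are exactly $n$ of them. Property~(ii) for $i,j$ in a common class is immediate from the definition of $\sim$ (taking $s=1$ if $i=j$); and if $i\in S_u(A)$, $j\in S_v(A)$ with $1\le u<v\le n$, then both classes are nonempty and $i\not\sim j$, so $\dim W=2$ and $\bck{\vecone\{\vx_i=\sigma,\vx_j=\tau\}}_{A,0}=1/q^2$, which is~(iii).

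For uniqueness, let $S_0',S_1',\dots,S_n'$ be any decomposition of $[n]$ satisfying (i)--(iii). Property~(i) forces every element of $S_0'$ into $S_0(A)$, and property~(ii) forces every element of a class $S_u'$ with $u\ge1$ to have uniform marginal, hence to lie outside $S_0(A)$; therefore $S_0'=S_0(A)$. For any two coordinates in a common class $S_u'$ with $u\ge1$, property~(ii) gives $x_j=sx_i$ a.s.\ for some $s\in\FF_q^*$, i.e.\ $i\sim j$; and for coordinates in distinct nonempty classes $S_u',S_v'$, property~(iii) makes $(x_i,x_j)$ uniform on $\FF_q^2$, which rules out $i\sim j$. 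Hence the nonempty members of $S_1',\dots,S_n'$ are exactly the $\sim$-classes and the remaining ones are empty, so this decomposition coincides with the constructed one up to a permutation of $S_1(A),\dots,S_n(A)$.

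I do not foresee a real obstacle: the only step beyond routine bookkeeping is the elementary classification of subspaces of $\FF_q^2$ surjecting onto both factors. The minor points to watch are the degenerate case $i=j$ in property~(ii) and wording the uniqueness claim so that it also allows permuting the empty padding classes.
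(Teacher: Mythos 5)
Your proof is correct and follows essentially the same route as the paper: the same set $S_0(A)$ of frozen coordinates, the same scalar-multiple equivalence on the rest, and the same linear-algebra fact that surjective projections of $\ker(A)$ onto one or two coordinates push the uniform measure forward to the uniform measure (the paper phrases this via explicit basis extensions, you via equal fibre sizes and the classification of subspaces of $\FF_q^2$, which is the same argument). As a bonus you spell out the uniqueness claim, which the paper states but does not verify explicitly.
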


\noindent
Thus, for any vector $x\in\ker (A)$ all the coordinates $x_i$, $i\in S_0(A)$, are equal to zero.
Moreover, if $i,j$ belong to the same class $S_u(A)$, $u\geq1$, then $x_j$ is always some specific non-zero scalar multiple of $x_i$.
Finally, if $i,j$ belong to different classes $S_u(A),S_v(A)$, $u,v\geq1$, then in a random $\vx\in\ker(A)$ the entries $\vx_i,\vx_j$ are uniform and independent.

As an application of \Lem~\ref{Prop_kernel} we obtain the following sufficient condition for the overlap condition (\ref{eqOverlap1}).

\begin{corollary}\label{Prop_hom} 
Let $d>0$. The condition (\ref{eqOverlap1}) is satisfied if
	\begin{equation}\label{eqOverlap2}
	\lim_{n\to\infty}\frac1n\Erw[\max_{i\geq0}|S_i(\vA)|]=0.
	\end{equation}
\end{corollary}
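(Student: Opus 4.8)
The plan is to condition on $\vA$, use the affine structure of the solution set, and then invoke the structural description of $\ker(\vA)$ from \Lem~\ref{Prop_kernel}. By \Lem~\ref{Fact_Nishimori}~(1), given $\vA,\vhy$ the set $\cS(\vA,\vhy)$ is a non-empty coset of $\ker(\vA)$; fixing any $\vhx_0\in\cS(\vA,\vhy)$, two uniformly and independently chosen solutions can be written as $\vec x=\vhx_0+\vec z$, $\vec x'=\vhx_0+\vec z'$ with $\vec z,\vec z'$ independent and uniform in $\ker(\vA)$. Write $S_0=S_0(\vA),S_1=S_1(\vA),\dots$ for the classes of \Lem~\ref{Prop_kernel} and pick a representative $i_u\in S_u$ for each non-empty class with $u\ge1$. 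Then \Lem~\ref{Prop_kernel} supplies, for each $j\in S_u$, a fixed scalar $s_j\in\FF_q^*$ with $s_{i_u}=1$ such that $\vec z_j=s_j\vec z_{i_u}$ for \emph{every} $\vec z\in\ker(\vA)$ (and likewise for $\vec z'$), while the entries $\vec z_{i_u}$ are uniform on $\FF_q$ and pairwise independent, $\vec z_i=0$ for $i\in S_0$, and $\vec z$ is independent of $\vec z'$.

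With this in hand, I would observe that conditionally on $\vA$ each entry of the overlap is a constant plus a sum over classes of uncorrelated, bounded random variables. Indeed, for $j\in S_u$ one has $\vecone\{\vec x_j=\sigma,\vec x_j'=\tau\}=\vecone\{\vec z_{i_u}=s_j^{-1}(\sigma-(\vhx_0)_j)\}\,\vecone\{\vec z_{i_u}'=s_j^{-1}(\tau-(\vhx_0)_j)\}$, so that
	\begin{align*}
	\omega_{\sigma,\tau}(\vec x,\vec x')&=\frac1n\sum_{i\in S_0}\vecone\{(\vhx_0)_i=\sigma=\tau\}+\sum_{u\ge1}g_u(\sigma,\tau),
	\end{align*}
where $g_u(\sigma,\tau)$ depends only on the pair $(\vec z_{i_u},\vec z_{i_u}')$, satisfies $0\le g_u(\sigma,\tau)\le|S_u|/n$, and has conditional mean $|S_u|/(q^2n)$ because $(\vec z_{i_u},\vec z_{i_u}')$ is uniform on $\FF_q^2$. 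From the first term I get $|\bck{\omega_{\sigma,\tau}}_{\vA,\vhy}-1/q^2|\le|S_0(\vA)|/n$ for every $\sigma,\tau$, hence $\tv{\bck{\omega(\vec x,\vec x')}_{\vA,\vhy}-\bar\omega}\le|S_0(\vA)|/n$. For the fluctuations, pairwise independence of the representatives gives $\mathrm{Cov}(g_u(\sigma,\tau),g_v(\sigma,\tau))=0$ for $u\ne v$, so the variance of $\omega_{\sigma,\tau}$ under $\bck{\nix}_{\vA,\vhy}$ is $\sum_u\Var(g_u(\sigma,\tau))\le\sum_u\frac{|S_u|}n\cdot\frac{|S_u|}{q^2n}\le\frac{\max_{i\ge0}|S_i(\vA)|}{q^2n}$.

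Combining these via $\bck{\tv{\omega-\bar\omega}}_{\vA,\vhy}\le\tv{\bck{\omega}_{\vA,\vhy}-\bar\omega}+\bck{\tv{\omega-\bck{\omega}_{\vA,\vhy}}}_{\vA,\vhy}$, bounding the $\ell_1$-distance on $\FF_q\times\FF_q$ by $q$ times the $\ell_2$-distance, and applying Cauchy--Schwarz, I obtain
	\begin{align*}
	\bck{\tv{\omega(\vec x,\vec x')-\bar\omega}}_{\vA,\vhy}&\le\frac{\max_{i\ge0}|S_i(\vA)|}{n}+\frac q2\sqrt{\frac{\max_{i\ge0}|S_i(\vA)|}{n}}.
	\end{align*}
The right-hand side is a function of $\vA$ alone, so taking $\Erw[\nix]$ and applying Jensen's inequality to the concave map $t\mapsto\sqrt t$ bounds $\Erw[\bck{\tv{\omega-\bar\omega}}_{\vA,\vhy}]$ by $\Erw[\max_{i\ge0}|S_i(\vA)|]/n+\frac q2\sqrt{\Erw[\max_{i\ge0}|S_i(\vA)|]/n}$, which tends to $0$ by the hypothesis \eqref{eqOverlap2}. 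This is precisely \eqref{eqOverlap1}.

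I do not expect a genuine obstacle here; the only points needing care are bookkeeping with \Lem~\ref{Prop_kernel}. One must note that the scalars $s_j$ linking coordinates within a class are the \emph{same} for the two draws $\vec z,\vec z'$, which holds because $\vec z_j=s_j\vec z_{i_u}$ is valid for \emph{all} of $\ker(\vA)$ (an event of probability one under the uniform distribution on a finite set is the whole set); and that the mere pairwise independence furnished by part~(iii) already suffices, since only the cross-covariances $\mathrm{Cov}(g_u,g_v)$ enter the variance estimate and no joint statement about three or more classes is ever needed. Everything else is Chebyshev's and Jensen's inequalities.
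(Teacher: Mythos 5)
Your proof is correct. It rests on the same key ingredient as the paper's --- the class decomposition $S_0(\vA),S_1(\vA),\ldots$ of \Lem~\ref{Prop_kernel} and the pairwise independence of coordinates lying in distinct classes --- but you organise the argument differently. The paper first converts \eqref{eqOverlap2} into an averaged pairwise-marginal statement (\Lem~\ref{Lemma_eqFact_simpleOverlap1}), passes to a \whp\ event with a slowly vanishing $\eps_n$, and then runs a two-stage Chebyshev argument (\Lem~\ref{Lemma_Y}): first concentrating the number of coordinates with $\vx_i-z_i=\sigma$, then, conditionally, the number of those with $\vx_i'-z_i=\tau$. You instead work entirely conditionally on $\vA$, write each overlap entry exactly as a deterministic $S_0$-contribution plus a sum of uncorrelated class variables $g_u$ determined by the independent uniform representatives $(\vec z_{i_u},\vec z_{i_u}')$, and read off the mean ($1/q^2$ up to $|S_0|/n$) and the variance (at most $\max_i|S_i|/(q^2n)$) in closed form; Cauchy--Schwarz and Jensen then finish the job. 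Your route is somewhat more economical --- it produces an explicit deterministic bound on $\bck{\tv{\omega(\vec x,\vec x')-\bar\omega}}_{\vA,\vhy}$ in terms of $\max_{i\geq0}|S_i(\vA)|/n$ and dispenses with the intermediate lemmas and the $\eps_n$ bookkeeping --- at the cost of being tailored to this specific statement, whereas \Lem~\ref{Lemma_Y} is stated for arbitrary index sets $I$. The two points you flag as needing care (the scalars $s_j$ being common to $\vec z$ and $\vec z'$ because the relation holds on all of $\ker(\vA)$, and pairwise independence sufficing since only cross-covariances enter) are exactly the right ones, and both check out.
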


\noindent
The proof of \Cor~\ref{Prop_hom} can be found in \Sec~\ref{Sec_hom}.
Thus, we are left to verify (\ref{eqOverlap2}) for all $d<d_k$.

\subsubsection{The nullity}
We are going to solve this task indirectly by first deriving an upper bound on the nullity of $\vA$.
Subsequently we will show that this bound could not possibly hold if (\ref{eqOverlap2}) were violated for some $d<d_k$.

\begin{proposition}\label{Prop_freeEnergy}
For all $d>0$ we have 
	\begin{align}\label{eqProp_freeEnergy}
	\limsup_{n\to\infty}\frac1n\Erw[\nul(\vA)]&\leq\sup_{\alpha\in[0,1]}\phi(\alpha),&\mbox{where}\\
	\phi(\alpha)&=\exp\bc{-d \alpha^{k-1}}+d\alpha^{k-1}-\frac{d(k-1)}{k}\alpha^k-\frac dk.\nonumber
	\end{align}
\end{proposition}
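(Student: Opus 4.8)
The plan is to bound $\Erw[\nul(\vA)] = \Erw[\log_q Z(\vA)]$ from above via a first-moment (annealed) computation on a cleverly chosen random variable, exploiting the fact that $Z(\vA)=q^{\nul(\vA)}$ is the partition function of a homogeneous linear system. The naive bound $\frac1n\Erw[\nul(\vA)] \le \frac1n\log_q\Erw[Z(\vA)] = 1 - d/k + o(1)$ (Jensen) is far too weak near $d_k$, precisely because of the lottery effect discussed in Section~\ref{Sec_techniques}: $Z(\vA)$ is dominated by atypical solution-rich instances. The standard cure in the physics-inspired toolbox is the \emph{Aizenman--Sims--Starr scheme}: instead of estimating $\Erw[\log Z(\vA_n)]$ directly, one estimates the increments $\Erw[\log Z(\vA_{n+1})] - \Erw[\log Z(\vA_n)]$ by coupling $\vA_{n+1}$ and $\vA_n$ so that they differ by the addition of one variable together with a $\Po(d)$ number of fresh equations (and, to keep the edge-to-variable ratio right, by the deletion of a $\Po(d(k-1)/k)\cdot$-ish number of old equations, or equivalently by adding roughly $d/k$ equations of arity $k$). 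Telescoping then gives $\frac1n\Erw[\nul(\vA_n)] \le \limsup_n \bigl(\Erw[\log_q Z(\vA_{n+1})]-\Erw[\log_q Z(\vA_n)]\bigr)$ up to $o(1)$.

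The key steps, in order, are as follows. First, set up the coupling: realise $\vA_{n+1}$ as $\vA_n$ with an extra coordinate $x_{n+1}$, a $\Po(d)$-distributed batch of new equations each containing $x_{n+1}$ and $k-1$ uniformly random old coordinates, plus a $\Po(d/k\cdot((n+1)^{??}))$ correction batch of equations on old coordinates only — matched so that the marginal law of each $\vA_n,\vA_{n+1}$ is exactly the model in the paper. Second, express the increment $\log_q Z(\vA_{n+1}) - \log_q Z(\vA_n)$ in terms of the uniform distribution $\bck{\cdot}_{\vA_n,0}$ on $\ker(\vA_n)$: adding an equation $\vec a\cdot x = 0$ multiplies $Z$ by $\bck{\vecone\{\vec a\cdot\vx=0\}}_{\vA_n,0}$, and adding the new variable multiplies $Z$ by $q$. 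Third — and this is where Lemma~\ref{Prop_kernel} enters decisively — use the structural description of $\ker(\vA_n)$ to show that, \emph{conditional on the overlap being asymptotically uniform} (which is exactly the regime we will later contradict, but here we need it as an \emph{unconditional} estimate, so really we use that the relevant marginals and pair-marginals are $1/q$ and $1/q^2$ except on a negligible set $\bigcup S_i$ of coordinates of controllable size), the inner products $\vec a\cdot\vx$ for $\vec a$ supported on $k$ or $k-1$ random coordinates behave like sums of independent uniform $\FF_q$ entries. Concretely, one introduces the order parameter $\alpha \in [0,1]$ as (a proxy for) the density of coordinates that are \emph{not} free — e.g.\ $\alpha = \Pr[\text{a random coordinate lies in } S_0(\vA_n)\text{ or is "frozen" in the 2-core sense}]$ — and shows that the expected increment equals $\phi(\alpha)\cdot\ln q + o(1)$ for the actual (random, concentrated) value of $\alpha$, hence is at most $\sup_{\alpha\in[0,1]}\phi(\alpha) \cdot \ln q + o(1)$. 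The three terms of $\phi$ then have transparent origins: $\exp(-d\alpha^{k-1}) + d\alpha^{k-1}$ is the contribution of the new variable together with the new equations through it (a $\Po(d\alpha^{k-1})$-thinned process, since a new equation constrains $Z$ only when all its $k-1$ old slots land on non-free coordinates), $-\frac{d(k-1)}{k}\alpha^k$ is the contribution of the correction equations on old coordinates, and $-d/k$ is the bookkeeping term from matching $\vm(d,n+1)-\vm(d,n)$.

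The main obstacle is the third step: controlling the inner products $\vec a\cdot \vx$ without already assuming the overlap result. The clean way around this is to \emph{not} prove a pointwise statement but rather a statement in expectation that is valid for \emph{every} $d$: one shows $\Erw[\log_q Z(\vA_{n+1})] - \Erw[\log_q Z(\vA_n)] \le \sup_\alpha \phi(\alpha) + o(1)$ by a \emph{convexity / entropy} argument — decompose the increment according to how many of the new equations are "effective" (have all old slots on non-free coordinates), use that an effective homogeneous equation $\vec a\cdot\vx=0$ with $\vec a$ nonzero on a random subset of the non-free coordinates kills a factor $q$ of $Z$ \emph{in expectation} (because, by Lemma~\ref{Prop_kernel}(iii), distinct non-free coordinates have independent uniform marginals, so the probability the equation is satisfied is $\le 1/q + o(1)$, with the loss coming from coincidences within a single class $S_u$, which are rare when $\max_i|S_i(\vA_n)|=o(n)$ — but if $\max_i |S_i(\vA_n)|$ is \emph{not} $o(n)$ we are in the branch we will contradict anyway, so we can afford to bound that contribution crudely). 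Taking $\alpha$ to be the empirical non-free density and optimising yields the bound. One must also handle the $\limsup$ carefully: the increment bound must hold uniformly in $n$ along a subsequence realising the $\limsup$, and a standard subadditivity / interpolation argument (as in the ASS scheme) converts the increment bound into the bound on $\frac1n\Erw[\nul(\vA_n)]$. I expect the write-up to devote most of its length to (a) specifying the coupling and checking the marginals, and (b) the Poisson-thinning bookkeeping that produces $\alpha^{k-1}$ and $\alpha^k$, both of which are routine but notation-heavy; the one genuinely delicate point is justifying the $1/q$ "effective equation" estimate via Lemma~\ref{Prop_kernel} uniformly, i.e.\ with an error term that does not secretly depend on the quantity $\frac1n\Erw[\max_i|S_i(\vA)|]$ that we have not yet controlled.
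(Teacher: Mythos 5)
Your high-level architecture is the paper's: the Aizenman--Sims--Starr telescoping, the coupling in which $\vA_{n+1}$ and $\vA_n$ are both grown from a common base matrix (one by adjoining a new column together with a $\Po(d)$ batch of equations through $x_{n+1}$, the other by adjoining a $\Po(d(1-1/k))$ batch of plain equations), the reduction of the increments to expectations under $\bck{\nix}_{\vA,0}$, and the use of \Lem~\ref{Prop_kernel} to confine the order parameter to the one-dimensional family indexed by $\alpha$, with the three terms of $\phi$ arising from exactly the Poisson bookkeeping you describe.

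There is, however, a genuine gap at the point you yourself flag as delicate, and your proposed workaround does not close it. The increment computation needs the joint law under $\bck{\nix}_{\vA,0}$ of the $O(1)$ coordinates hit by the new equations to factorise into a product of marginals; by \Lem~\ref{Prop_kernel} this fails exactly when two of the random positions land in the same class $S_u(\vA)$, $u\geq1$, which happens with probability bounded away from zero whenever $\max_{u\geq1}|S_u(\vA)|=\Omega(n)$. You propose to handle this branch "crudely" because it "will be contradicted anyway". That is circular: \Prop~\ref{Prop_wrap} derives the contradiction \emph{from} \Prop~\ref{Prop_freeEnergy}, so the latter must hold unconditionally for every $d>0$, with no assumption on $\max_i|S_i(\vA)|$. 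Nor is a crude one-sided bound affordable: when factorisation fails, the events that the individual new equations through $x_{n+1}$ are satisfied need not be independent under $\bck{\nix}_{\vA,0}$, so the ratio $Z(\vA_{n+1})/Z(\vA_n)$ can exceed the factorised prediction by a constant factor, inflating the increment by $\Theta(1)$ per step and hence the nullity bound by $\Theta(n)$; accordingly the paper's Claims establishing the increment are two-sided. The missing ingredient is the pinning device of \Lem~\ref{Lemma_0pinning} and \Cor~\ref{Cor_0pinning}: for every $\eps>0$ there is $T=T(\eps)$, independent of $n$, $m$ and $A$, such that appending the equations $x_{\vi_1}=\cdots=x_{\vi_{\vec\theta-1}}=0$ for uniformly random indices and a uniformly random $\vec\theta\in[T]$ makes the kernel measure $(\eps,2)$-symmetric with probability at least $1-\eps$; this is proved unconditionally by a pigeonhole argument on the growth of $S_0$ along the pinning steps. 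Since the perturbation changes the nullity by at most $T-1=O(1)$, one proves the increment bound for the pinned matrices and transfers it back at no asymptotic cost. Without this (or an equivalent unconditional symmetrisation) the argument as proposed does not go through.
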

{\em Remark.} While~(\ref{eqProp_freeEnergy}) gives an upper bound for $\Erw[\nul(\vA)]$ for all $d>0$, if we consider  $d<d_k$, $\sup_{\alpha\in[0,1]}\phi(\alpha)=\phi(0)=1-d/k$, as shown in Lemma~\ref{Prop_opt} below. Hence, for $d<d_k$, Proposition~\ref{Prop_freeEnergy} compares the rank of $\vA$ with its number of rows.

The proof of \Prop~\ref{Prop_freeEnergy} is based on a coupling argument reminiscent of the Aizenman-Sims-Starr scheme from mathematical physics~\cite{Aizenman}.
This approach was used in prior work on LDGM codes~\cite{CKPZ}.
However, if we were to follow the previous Aizenman-Sims-Starr scheme directly, we would be left with an infinite-dimensional optimisation problem. 
Fortunately, once more the algebraic nature of the problem comes to the rescue.
Not only do we obtain a very clean coupling argument, but also can we confine the class of distributions that we need to optimise over to a one-dimensional set.

To be more precise, applied to our algebraic setting the Aizenman-Sims-Starr scheme argument comes down to the following simple observation.
Writing $\vA_n$ for the random linear system with $n\geq k$ variables and $\vm\sim\Po(dn/k)$ equations, we clearly have
	\begin{align}\label{eqAiz1}
	\Erw[\nul(\vA_n)]&=\Erw[\nul(\vA_k))]+{\sum_{N=k +1}^n}\Erw[\nul(\vA_N)]-\Erw[\nul(\vA_{N-1})].
	\end{align}
Consequently,
	\begin{align}\label{eqAiz1}
	\limsup_{n\to\infty}\frac1n\Erw[\nul(\vA_n)]&\leq\limsup_{n\to\infty}\Erw[\nul(\vA_{n+1})]-\Erw[\nul(\vA_n)].
	\end{align}
In order to estimate the difference $\Erw[\nul(\vA_{n+1})]-\Erw[\nul(\vA_n)]$ we will set up a coupling of the random systems $\vA_n,\vA_{n+1}$.
Roughly speaking, under this coupling $\vA_{n+1}$ is obtained from $\vA_n$ by adding one extra variable $x_{n+1}$ along with a few random linear equations,
and we need to estimate the ensuing change in nullity very carefully.
The proof of \Prop~\ref{Prop_freeEnergy}, which we carry out in \Sec~\ref{Sec_Prop_freeEnergy}, is the core of the entire paper.

\Prop~\ref{Prop_freeEnergy} leaves us with an elementary one-dimensional optimisation problem.
In \Sec~\ref{Sec_Prop_opt} we will solve this  problem  explicitly for $d<d_k$.

\begin{lemma}\label{Prop_opt} 
If $0<d\leq d_k$, then
	$\sup_{\alpha\in[0,1]}\phi(\alpha)=\phi(0)=1-d/k$.
\aco{Moreover, if $d>d_k$, then 
	$$\sup_{\alpha\in[0,1]}\phi(\alpha)=\phi(\rho_{k,d})=1-d/k-\rho_{k,d}+d\rho_{k,d}^{k-1}-d(k-1)\rho_{k,d}^k/k.$$}
\end{lemma}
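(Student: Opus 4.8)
The plan is to analyse $\phi$ through its derivative, exploiting that the critical points of $\phi$ are exactly the fixed points of the map $F\colon x\mapsto1-\exp(-dx^{k-1})$ that already governs $\rho_{k,d}$ and the $2$-core. Differentiating termwise gives, for $\alpha\in[0,1]$,
\[
\phi'(\alpha)=d(k-1)\alpha^{k-2}\bc{1-\exp(-d\alpha^{k-1})-\alpha},
\]
so on $(0,1)$ the sign of $\phi'(\alpha)$ coincides with that of $g(\alpha):=F(\alpha)-\alpha$. Here $g(0)=0$, $g(1)=-\eul^{-d}<0$, and $g(\alpha)<0$ for small $\alpha>0$ since $1-\exp(-d\alpha^{k-1})=O(\alpha^{k-1})=o(\alpha)$ as $k\ge3$; moreover the positive zeros of $g$ in $(0,1)$ are precisely the positive fixed points of $F$. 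By the convexity of $h(\mu)=\mu(1-\eul^{-\mu})^{-(k-1)}$ recalled in the proof of \Thm~\ref{Thm_Mike}, $F$ has no positive fixed point if $d<d_k^\star$, exactly one (tangent) one if $d=d_k^\star$, and exactly two, $\rho_-<\rho_+=\rho_{k,d}$, if $d>d_k^\star$. Feeding this back, $\phi$ is weakly decreasing on $(0,1)$ when $d\le d_k^\star$, whereas for $d>d_k^\star$ it decreases on $(0,\rho_-)$, increases on $(\rho_-,\rho_{k,d})$ and decreases on $(\rho_{k,d},1)$. In every case the maximum of the continuous function $\phi$ over the compact interval $[0,1]$ is attained at $0$ or at $\rho_{k,d}$, so that
\[
\sup_{\alpha\in[0,1]}\phi(\alpha)=\max\cbc{\phi(0),\,\phi(\rho_{k,d})}.
\]

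Next I would evaluate the two candidate values. One has $\phi(0)=1-d/k$ immediately. For $\phi(\rho_{k,d})$ I would substitute the fixed-point identity $\exp(-d\rho_{k,d}^{k-1})=1-\rho_{k,d}$ (which also holds trivially when $\rho_{k,d}=0$) to get
\[
\phi(\rho_{k,d})=1-\frac dk-\rho_{k,d}+d\rho_{k,d}^{k-1}-\frac{d(k-1)}k\rho_{k,d}^k,
\]
and therefore
\[
\phi(\rho_{k,d})-\phi(0)=-\Psi(d),\qquad\Psi(d):=\rho_{k,d}-d\rho_{k,d}^{k-1}+(1-1/k)d\rho_{k,d}^k,
\]
where $\Psi(d)$ is exactly the quantity whose sign defines $d_k$ in \eqref{eqdk}. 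So the statement reduces to showing $\Psi(d)\ge0$ for $0<d\le d_k$ and $\Psi(d)<0$ for $d>d_k$.

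Finally I would pin down the sign of $\Psi$ using \Thm~\ref{Thm_Mike}. For $d<d_k^\star$ we have $\rho_{k,d}=0$, hence $\Psi(d)=0$. For $d\ge d_k^\star$, comparing $\Psi$ with \eqref{nm} gives $\Psi(d)=\lim_{n}(n_*(\vA)-m_*(\vA))/n$, so $\Psi(d)$ has the opposite sign to $\pi_k(d)-1$, with $\pi_k$ the ratio in \eqref{monotone}; since $\pi_k$ is strictly increasing on $[d_k^\star,\infty)$, the set $\{d>0:\Psi(d)<0\}$ is an interval of the form $(d_k,\infty)$ with $d_k\ge d_k^\star$. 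Consequently $\Psi(d)\ge0$ for every $d<d_k$, and $\Psi(d)<0$ for every $d>d_k$ (the latter being precisely the bound already used in the proof of \Lem~\ref{Cor_Mike}). For the boundary point $d=d_k$ I would invoke continuity of $d\mapsto\rho_{k,d}$ near $d_k$ to conclude $\Psi(d_k)=\lim_{d\uparrow d_k}\Psi(d)\ge0$; this is legitimate because $d_k>d_k^\star$, which follows from a short explicit computation at the tangency point (plugging $F'(\rho_{k,d_k^\star})=1$, i.e.\ $d(k-1)\rho_{k,d_k^\star}^{k-2}(1-\rho_{k,d_k^\star})=1$, into the definition of $\Psi$ shows $\Psi(d_k^\star)>0$). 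Substituting the resulting sign of $\Psi(d)$ into $\phi(\rho_{k,d})-\phi(0)=-\Psi(d)$ then yields $\sup_{\alpha}\phi(\alpha)=\phi(0)=1-d/k$ for $0<d\le d_k$ and $\sup_{\alpha}\phi(\alpha)=\phi(\rho_{k,d})$ for $d>d_k$.

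The one genuinely delicate point is the structural step in the first paragraph: one must rule out any interior local maximum of $\phi$ other than at $\alpha=\rho_{k,d}$, which rests entirely on the exact count (zero, one, or two) of positive fixed points of $F$, and hence on the convexity of $h$ borrowed from \Thm~\ref{Thm_Mike}. Everything else — the derivative computation, the substitution $\exp(-d\rho_{k,d}^{k-1})=1-\rho_{k,d}$, the short tangency computation, and the bookkeeping against \eqref{eqdk}, \eqref{nm} and \eqref{monotone} — is routine.
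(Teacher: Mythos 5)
Your proposal is correct and follows essentially the same route as the paper: compute $\phi'$, identify the interior critical points with the fixed points of $\alpha\mapsto1-\exp(-d\alpha^{k-1})$, reduce the supremum to $\max\cbc{\phi(0),\phi(\rho_{k,d})}$, and decide between the two via the sign of $\rho_{k,d}-d\rho_{k,d}^{k-1}+(1-1/k)d\rho_{k,d}^k$ from the definition \eqref{eqdk} of $d_k$. The only differences are minor: the paper locates the critical points by showing that the second derivative of $f(\alpha)=1-\exp(-d\alpha^{k-1})$ has a single positive root and then applies the second-derivative test to $\phi$, whereas you read off the full sign profile of $\phi'$ from the convexity of $h$ recalled in the proof of \Thm~\ref{Thm_Mike}; and you are somewhat more explicit than the paper about the sign of $\Psi$ on $[d_k^\star,d_k]$ (via the monotonicity of $\pi_k$ and the tangency computation giving $d_k>d_k^\star$), both of which check out.
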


{Although the reduced system $\vA_*x_*=\vy_*$ plays no explicit role in the proofs of \Prop~\ref{Prop_freeEnergy} and \Lem~\ref{Prop_opt}, the optimisation problem on the right hand side of (\ref{eqProp_freeEnergy}) is closely related to the nullity of $\vA_*$.
In fact, {the proof of} \Lem~\ref{Prop_opt} reveals that the supremum is attained at $\alpha=0$ precisely up to the degree $d_k$ where $\nul(\vA_*)=\Omega(n)$, while $\Erw[\nul(\vA_*)]=O(1)$ for $d>d_k$.
Furthermore, $\alpha$ is equal to the fraction of entries of the vector $\vhx$ that can be reconstructed from $\vA$ and $\vhy=\vA\vhx$, 
	up to algebraic symmetries (e.g., in the case $q=2$ and $k$ even it is impossible to tell $\vhx$ and its binary inverse $\vecone+\vhx$ apart).
Thus, $d_k$ does indeed mark the threshold from where non-trivial recovery of $\vhx$ becomes possible.
But let us not dwell on this because we won't need this fact to establish the main results.}

\subsubsection{Wrapping up}
In \Sec~\ref{Sec_wrap} we will deduce from \Prop~\ref{Prop_freeEnergy} and \Lem~\ref{Prop_opt} that (\ref{eqOverlap2}) is satisfied for all $d<d_k$.
The proof is based on assembling the random linear system $\vA$ with $\vm\sim\Po(dn/k)$ equations by adding one random equation at a time.
Computing the derivative $\frac{\partial}{\partial d}\frac1n\Erw[\nul(\vA)]$, 
we will show that if (\ref{eqOverlap2}) were violated for some $d<d_k$,
then the nullity would be much bigger than~(\ref{eqProp_freeEnergy}) permits.
{A similar argument was used in~\cite{CEJKK} for LDGM codes with positive noise in the case $q=2$, but it was necessary to assume that $k\geq4$ is even.
	Here the algebraic structure of the problem, properly exploited, allows us to deal with all $k\geq3$.}

\begin{proposition}\label{Prop_wrap}
{If (\ref{eqOverlap2}) fails to hold for some $d<d_k$, then there exists $D<d_k$ such that}
	$$\limsup_{n\to\infty}\frac1n\Erw[\nul(\vA_{n,\vm(D,n)})]>1-D/k.$$
\end{proposition}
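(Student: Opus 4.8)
The plan is to convert the statement into a claim about the derivative of the nullity density in the density parameter. Write $f_n(d)=\frac1n\Erw[\nul(\vA_{n,\vm(d,n)})]$ and couple $\vA_{n,m}$ with $\vA_{n,m+1}$ by appending one fresh random equation. The Poisson structure then gives $f_n'(d)=\frac1k\bc{\Erw[\nul(\vA_{n,\vm+1})]-\Erw[\nul(\vA_{n,\vm})]}=-\frac1k\bc{1-g_n(d)}$, where $g_n(s)$ is the probability that a fresh random equation lies in the row space of $\vA_{n,\vm(s,n)}$. Since $f_n(0)=1$, integration yields
\[
\Delta_n(d):=f_n(d)-\bc{1-\tfrac dk}=\frac1k\int_0^d g_n(s)\,\dd s\ \ge\ 0,
\]
and $\Delta_n$ is nondecreasing in $d$. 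Hence it suffices to produce $\delta>0$, a closed interval $J\subseteq(d,d_k)$ of positive length, and a subsequence of $n$ along which $g_n(s)\ge\delta$ for all $s\in J$: then $\limsup_n\Delta_n(D)\ge\delta|J|/k>0$ for $D=\max J<d_k$, which is exactly the assertion.

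To feed in the hypothesis: if (\ref{eqOverlap2}) fails at $d$ then along a subsequence $\Erw[\max_{i\ge0}|S_i(\vA_{n,\vm(d,n)})|]\ge cn-o(n)$ for some $c>0$, so a reverse Markov bound gives $\Pr[\exists i:|S_i(\vA_{n,\vm(d,n)})|\ge cn/2]\ge c/2-o(1)$. Crucially, by \Lem~\ref{Prop_kernel} appending an equation only enlarges $S_0$ (a coordinate frozen to $0$ stays frozen) and only coarsens the partition into the classes $S_i$ (perfectly correlated coordinates stay perfectly correlated), so in the one-equation-at-a-time coupling the event $\mathcal B_s=\{\exists i:|S_i(\vA_{n,\vm(s,n)})|\ge cn/2\}$ is increasing in $s$; thus $\Pr[\mathcal B_s]\ge c/2-o(1)$ for every $s\ge d$. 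Now comes the local dichotomy, where the algebra is used: condition on a class $C$ with $|C|\ge cn/2$ and consider a fresh equation all $k$ of whose coordinates fall in $C$ (probability $\ge(c/2)^k-o(1)$). If $C=S_0$ the equation automatically lies in the row space. If $C=S_u$ is a free class, write $x_j=r_jt_u$ for $j\in C$ as in \Lem~\ref{Prop_kernel}(ii); the fresh equation reads $(\sum_h a_hr_{j_h})t_u=0$, which \emph{either} lies in the row space (when the coefficient combination vanishes) \emph{or} forces $t_u=0$, i.e.\ merges all of $C$ into $S_0$. Writing $\beta_n(s)=\Pr[|S_0(\vA_{n,\vm(s,n)})|\ge cn/2]$ we thus get $g_n(s)\ge(c/2)^k\beta_n(s)-o(1)$ in all cases, and, conditioned on a big free class, the fresh equation collapses it into $S_0$ with probability at least $(c/2)^k$ minus the chance that its coefficient combination vanishes.

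Finally I would close the loop across an interval of densities. Since $S_0$ is monotone, $\beta_n$ is nondecreasing; so if $\beta_n(D_0)\ge c/4$ for some $D_0\in(d,d_k)$ along the subsequence, then $g_n(s)\ge(c/2)^k(c/4)-o(1)$ on all of $[D_0,D]$ for any $D<d_k$, and we are done. Otherwise $\beta_n(s)<c/4$ throughout $[d,d_k)$, so $\Pr[\text{there is a big \emph{free} class}]\ge c/4$ there; fix $\eta\in(0,d_k-d)$. Over $[d,d+\eta]$ we append $\Omega(n)$ equations, a positive density of which land entirely inside the (persisting, by monotonicity of $\mathcal B_s$) big free class; by the dichotomy each is either already redundant or collapses the class. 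If the ``already redundant'' branch has probability bounded below on a positive-length sub-interval, then $g_n\ge\delta$ there. If not, then on a positive-length sub-interval the collapse probability per step is $\Omega(1)$, so among $\Omega(n)$ attempts at least one succeeds with probability $1-o(1)$, forcing $|S_0(\vA_{n,\vm(d+\eta,n)})|\ge cn/2$ and hence $\beta_n(d+\eta)\ge c/4-o(1)$, contradicting the standing assumption. Either way $g_n\ge\delta$ on a positive-length interval inside $(d,d_k)$, as needed.

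I expect the main obstacle to be precisely this last step: making the free-class alternative work rigorously and uniformly in $k\ge3$. In the Boolean LDGM analogue~\cite{CEJKK} one had to assume $k$ even because a fresh parity check landing inside a correlated block of coordinates could be vacuously satisfied; here \Lem~\ref{Prop_kernel} reorganises the situation into the clean alternative ``redundant, or collapses the block into $S_0$'', which neutralises the parity issue. What remains delicate is the probabilistic bookkeeping — the reverse Markov step, the monotonicity of the big-class event and of $S_0$, the propagation across an interval of $d$, the continuity of $g_n$ and $\beta_n$ in $s$ that legitimises the ``positive-length sub-interval'' statements, and in particular ruling out that the coefficient distribution $P$ could make \emph{every} in-block equation vacuous at \emph{every} density at once.
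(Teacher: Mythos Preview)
Your derivative/integration setup, the monotonicity of $S_0$ and of the class partition, and the $S_0$ case are correct and coincide with the paper's \Lem~\ref{Lemma_unfrozen}. The divergence is entirely in how the big-free-class case is handled.

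The paper does \emph{not} try to collapse a big free class via the random $k$-ary equations themselves. Instead it uses a pinning trick (\Lem~\ref{Lemma_freeze}): append at most $T-1$ unit rows freezing $x_1,\dots,x_{\vec\theta-1}$ to zero, with $\vec\theta$ uniform on $[T]$ and $T=T(\eps)$ bounded. By column-permutation invariance of the law of $\vA$ this is distributionally the same as freezing $\vec\theta-1$ uniformly random coordinates; as soon as one of them lands in a big class $S_u$, that entire class is absorbed into $S_0$. Only $O(T)=O(1)$ rows are added, so the nullity changes by $O(1)$, which vanishes on the $1/n$ scale. This reduces everything to the $S_0$ case in one stroke and completely bypasses your cross-density propagation.

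Your ``close the loop'' step, as written, has a genuine gap. The sub-interval dichotomy is ill-posed: the negation of ``redundant branch bounded below on a sub-interval'' does not give ``collapse branch bounded below on a sub-interval'', and neither quantity is obviously continuous in $s$. (Also, your last worry is inverted: if $P$ made every in-block equation vacuous, then $g_n$ would be directly bounded below --- that is the \emph{easy} outcome.) The argument is salvageable, but only with more care than you indicate: fix $C_0$ and its scalars $(r_j)_{j\in C_0}$ at density $d$ and split on whether $p=p(A_d,C_0)\ge 1/2$. If $p\ge1/2$, an in-$C_0$ equation with $\sum_h a_hr_{j_h}=0$ lies in the row space of $A_d$ and hence of every $A_s$ for $s\ge d$, so $g_n(s)$ is uniformly bounded below. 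If $p<1/2$, then among the $\Omega(n)$ fresh equations added over $[d,d+\eta]$, $\Omega(n)$ land in $C_0$ w.h.p.; since the scalars on $C_0$ are unchanged while $C_0$ remains free, each such equation independently collapses $C_0$ with probability $1-p>1/2$, so w.h.p.\ $C_0\subset S_0(A_{d+\eta})$ and you are back in the $S_0$ case from $d+\eta$ onward. This works, but the paper's pinning trick achieves the same conclusion in a few lines.
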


\noindent
We are now ready to deduce \Thm s~\ref{Thm_SAT}--~\ref{Thm_Abelian} from the propositions above.

\begin{proof}[Proof of \Thm~\ref{Thm_SAT}]
\Lem~\ref{Cor_Mike} implies that there is no solution \whp\ if $d>d_k$.
On the other hand, 
\Prop~\ref{Prop_freeEnergy}, \Lem~\ref{Prop_opt} and \Prop~\ref{Prop_wrap} show that (\ref{eqOverlap2}) is satisfied for all $d<d_k$.
Thus, \Cor~\ref{Prop_hom} implies that the overlap condition (\ref{eqOverlap1}) holds for all $d<d_k$.
Therefore, \Prop~\ref{Prop_smm} shows that the random linear system $\vA x=\vy$ has a solution \whp\ for all $d<d_k$.
\Prop~\ref{Prop_smm} also yields the rank statement~\eqref{eqrank}.
\end{proof}

\begin{proof}[Proof of \Cor~\ref{Cor_rank}]
\aco{Since $\rk(\vA)=n-\nul(\vA)$, it suffices to calculate the nullity for $d>d_k$.
\Prop~\ref{Prop_freeEnergy} and \Lem~\ref{Prop_opt} yield an upper bound, namely
	\begin{align}\label{eqCor_rank}
	\frac1n\Erw[\nul(\vA)]&\leq1-d/k-\rho_{k,d}+d\rho_{k,d}^{k-1}-d(k-1)\rho_{k,d}^k/k+o(1).
	\end{align}
To obtain a matching lower bound, consider the modified linear system $\vA'x'=0$ where we replace every variable $x_i$ in the $2$-core of $G(\vA)$ by the constant value $0$.
Clearly, $\nul(\vA)\geq\nul(\vA')$ because every vector in the kernel of $\vA'$ extends to a vector in the kernel of $\vA$ by simply adding $0$ entries for all variables in the $2$-core of $G(\vA)$.
Furthermore, since all linear equations that comprise core variables only are trivially satisfied, the linear system $\vA'$ contains $\vm-m_*(\vA)$ equations.
Therefore, $\rk(\vA')\leq \vm-m_*(\vA)$.
Moreover, $n-n_*(\vA)$ variables remain, and thus $\nul(\vA')\geq n-n_*(\vA)-\vm+m_*(\vA)$.
Substituting the expression for $n_*(\vA),m_*(\vA)$ from \Thm~\ref{Thm_Mike}, we obtain a lower bound that matches (\ref{eqCor_rank}).}
\end{proof}

\begin{proof}[Proof of \Thm~\ref{Thm_overlap}]
Suppose $d<d_k$.
Then \Prop~\ref{Prop_freeEnergy}, \Lem~\ref{Prop_opt}, \Prop s~\ref{Prop_wrap} and \Cor~\ref{Prop_hom} imply that (\ref{eqOverlap1}) holds, and
by \Thm~\ref{Thm_SAT} the matrix $\vA$ has rank $\vm$ \whp\
If so, then the second part of \Lem~\ref{Fact_Nishimori} shows that the vector $\vhy$ is uniformly {distributed over $\FF_q^{\vm}$}, i.e.,
the linear systems $\vA x=\vy$ and $\vA x=\vhy$ are identically distributed.
Therefore, the assertion follows from (\ref{eqOverlap1}).
\end{proof}

\begin{proof}[Proof of \Thm~\ref{Cor_freezing}]
Achlioptas and Molloy~\cite{AchlioptasMolloy} proved \Thm~\ref{Cor_freezing} only for the special case $q=2$.
But their proof depends only on two ingredients:
knowledge of the satisfiability threshold, which \Thm~\ref{Thm_SAT} supplies for all $q$, and certain combinatorial properties of the
random hypergraph $G(\vA)$, which is independent of $q$.
Thus, thanks to \Thm~\ref{Thm_SAT} the argument of Achlioptas and Molloy carries over directly to all $q>2$.
The full details with the appropriate pointers to the specific statements from~\cite{AchlioptasMolloy} can be found in Appendix~\ref{Sec_freezing}.
\end{proof}

\begin{proof}[Proof of \Thm~\ref{Thm_Abelian}]
Any finite Abelian group $\Gamma$ of order greater than one can be written as a direct sum
	\begin{align*}
	\Gamma&=\bigoplus_{i=1}^K \ZZ/q_i\ZZ
	\end{align*}
of cyclic groups whose orders $q_1,\ldots,q_K\geq2$ are prime powers. It is therefore sufficient to show Theorem \ref{Thm_Abelian} for cyclic groups of the form $\Gamma=\ZZ_{p^\ell}$.

In the following, let thus $k \geq 3$, $p$ be a prime, $\ell \geq 1$ and $(\Gamma,+) = (\ZZ_{p^\ell}, +)$. 
We denote by $\vB \in \{0,1\}^{\vm \times n}$ the random matrix whose non-zero entries in row $i$ are in columns $\vec \alpha_{i,1}, \ldots, \vec\alpha_{i,k}$.

\vspace{0.3 cm}
\textbf{Upper bound:} 
Let $\vy \in \Gamma^{\vm}$ be chosen uniformly given $\vm$ and independently of $(\vec\alpha_{i,j})_{i,j}$ as above. Set 
$$\vy'= \vy \mod p,$$ 
and 
consider the linear system $\vB x = \vy'$ over $\FF_p$. Then the system $(\vB, \vy')$ is of the type studied in  \Thm~\ref{Thm_SAT}  with permutation invariant distribution $P=\delta_{(1, \ldots, 1)}$, since $\vy'$ is uniformly distributed over $\{0, \ldots, p-1\}$.
Moreover, any solution $x \in \Gamma^n$ to $\bigwedge_{i=1}^{\vm}\bc{\sum_{h=1}^k x_{\vec \alpha_{i,h}}=\vec y_i}$ yields a solution $x'  = x \mod p \in \FF_p^n$ of $\vB x' = \vy'$. 

Thus, if there is no solution to $\vB x' = \vy'$ \whp, there is also no solution to  $\bigwedge_{i=1}^{\vm}\bc{\sum_{h=1}^kx_{\vec \alpha_{i,h}}=\vec y_i}$ \whp~
 By  \Thm~\ref{Thm_SAT}, 
\begin{align*}
	\lim_{n\to\infty}\pr\brk{\exists x\in\Gamma^n:\bigwedge_{i=1}^{\vm}\bc{\sum_{h=1}^kx_{\vec \alpha_{i,h}}=\vec y_i}} = 0 \mbox{ if }d>d_k.
\end{align*}

\vspace{0.3 cm}
\textbf{Lower bound:} Here we use the following relation:
Let $R$ be any commutative ring and $A$ be a square matrix with entries in $R$. Then
\begin{align*}
A \text{ is invertible over } R \qquad \Longleftrightarrow \qquad \text{det}_R(A) \text{ is a unit in } R.
\end{align*}

Assume now that $\rk_{\FF_p}(\vB) = \vm$. Since the rank of $\vB$ is equal to the largest size of a square submatrix of $\vB$ that has non-zero determinant, we can choose such a square matrix $\vB'$ with
\begin{align*}
\text{det}_{\FF_p}(\vB') \not= 0.
\end{align*}
In particular, $\text{det}_{\ZZ}(\vB')$ is not divisible by $p$, which implies that $\text{det}_{\ZZ_{p^\ell}}(\vB')$ is a unit in $\ZZ_{p^\ell}$. By the above fact, it follows that $\vB'$ is invertible over $\ZZ_{p^\ell}$. 

Therefore, if $\rk_{\FF_p}(\vB) = \vm$, for any $y \in \Gamma^n$, the system $\vB x = y$ over $\ZZ_{p^\ell}$ has a solution and so 
\begin{align*}
	\lim_{n\to\infty}\pr\brk{\exists x\in\Gamma^n:\bigwedge_{i=1}^{\vm}\bc{\sum_{h=1}^kx_{\vec \alpha_{i,h}}=\vec y_i}} = 1 \mbox{ if }d<d_k.
\end{align*}
\end{proof}

\subsection{Proof of \Lem~\ref{Fact_Nishimori}}\label{Sec_Fact_Nishimori}
Because $\vhx\in\FF_q^n$ is uniformly distributed and independent of $\vA$, Bayes' rule shows that for any $A,x,y$,
	\begin{align*}
	\pr\brk{\vA=A,\vhy=y\mid\vm}&=\sum_{x\in\FF_q^n}\pr\brk{\vhy=y|\vA=A,\vhx=x,\vm}\pr\brk{\vA=A,\vhx=x\mid\vm}\\
		&=q^{-n}\pr\brk{\vA=A\mid\vm}\sum_{x\in\FF_q^n}\vecone\{Ax=y\}=q^{-n}\pr\brk{\vA=A\mid\vm}Z(A,y),
	\end{align*}
which gives (\ref{eqNishimori}), by noting that $\Erw[Z(\vA,\vy)\mid \vm=m]=q^{n-\vm}$.
Further, using Bayes' rule and (\ref{eqNishimori}), we see that for any $A,x,y$,
	\begin{align*}
	\pr\brk{\vhx=x|\vA=A,\vhy=y}&=\frac{\pr\brk{\vhy=y|\vA=A,\vhx=x}\pr\brk{\vA=A}\pr\brk{\vhx=x}}
		{\sum_{z\in\FF_q^n}\pr\brk{\vA=A,\vhy=y,\vhx=z}}
		=\frac{\vecone\{Ax=y\}}{Z(A,y)},
	\end{align*}
whence the first assertion follows.
Finally, the second assertion follows from (\ref{eqNishimori}) because $Z(A,y)=Z(A,y')$ for all $y,y'$ in the image of $A$.
\qed

\subsection{Proof of \Lem~\ref{Prop_kernel}}\label{Sec_kernel}
We construct the decomposition $S_0=S_0(A), \ldots, S_n=S_n(A)$ step by step. First, let 
	$$S_0= \{i \in [n]: \forall x\in\ker(A):x_i=0\}$$
be the set of coordinates set to $0$ in all vectors $x\in\ker(A)$.
Subsequently, having constructed $S_0,\ldots,S_h$ already, we either let $S_{h+1}=\emptyset$ if $S_0\cup\cdots\cup S_h=[n]$ or otherwise
pick the least index $i\in[n]\setminus(S_0\cup\cdots\cup S_h)$ and define
	\begin{align}\label{eqProp_kernel_1}
	S_{h+1}= \{j \in [n]\setminus(S_0\cup\cdots\cup S_h): \exists s \in \FF_q^*\, \forall x\in\ker(A): x_i=sx_j\}.
	\end{align}
By construction, the sets $S_0, \ldots, S_n$ are pairwise disjoint and statement (i) in \Lem~\ref{Prop_kernel} is satisfied.

To verify the other two statements we observe that
	\begin{align}\label{eqProp_kernel_2}
	\bck{\vecone\{\vx_i=\sigma\}}_{A,0}&=1/q&\mbox{for all }i\not\in S_0,\sigma\in\FF_q.
	\end{align}
Indeed, if $i\not\in S_0$ then there is $x\in\ker(A)$ such that $x_i=1$.
We extend $x$ to a basis $x,y_1,\ldots,y_l$ of $\ker(A)$.
Then the map 
	$$\varphi:\FF_q^{l+1}\to\ker(A),\qquad(\tau_0,\ldots,\tau_l)\mapsto\tau_0x+\sum_{i=1}^l\tau_iy_i$$
is an isomorphism.
Hence, the image of a uniformly random tuple $(\TAU_0,\ldots,\TAU_l)\in\FF_q^{l+1}$ is a uniformly distributed element of the kernel.
Further, given $\TAU_1,\ldots,\TAU_l$, the $i$-th coordinate of the vector $\varphi(\TAU_0,\ldots,\TAU_l)$ is uniformly distributed because
$\TAU_0$ is chosen independently of $\TAU_1,\ldots,\TAU_l$ and $x_i=1$, whence we obtain (\ref{eqProp_kernel_2}).

Combining (\ref{eqProp_kernel_1}) and (\ref{eqProp_kernel_2}) we see that statement (ii) in \Lem~\ref{Prop_kernel} is satisfied.
To prove (iii) let $i,i'$ be two indices that belong to distinct classes $S_h,S_{h'}$, $1\leq h<h'\leq n$, respectively.
Then there exist $\xi,\zeta\in\ker(A)$ such that
	\begin{align*}
	\rk\begin{pmatrix}
	\xi_i&\xi_{i'}\\
	\zeta_i&\zeta_{i'}
	\end{pmatrix}=2.
	\end{align*}
Hence, there are linear combinations $x=s\xi+t\zeta$, $x'=s'\xi+t'\zeta$, $s,s',t,t'\in\FF_q$, such that 
	\begin{align}\label{eqProp_kernel_3}
	\begin{pmatrix}
	x_i&x_{i'}\\
	x_i'&x_{i'}'
	\end{pmatrix}&=\begin{pmatrix}1&0\\0&1\end{pmatrix}.
	\end{align}	
Since $x,x'$ are linearly independent we find {$y_2,\ldots,y_{l}$} that extend $x,x'$ to a basis of $\ker(A)$, and the isomorphism
	$$\psi:\FF_q^{l+1}\to\ker(A),\qquad(\tau_0,\ldots,\tau_l)\mapsto\tau_0x+\tau_1x'+\sum_{i=2}^l\tau_iy_i$$
maps the uniform distribution on $\FF_q^{l+1}$ to the uniform distribution on $\ker(A)$.
Finally, due to (\ref{eqProp_kernel_3}) the joint distribution of the $i$-th and the $i'$-th coordinate of the vector
$\psi(\TAU_0,\ldots,\TAU_l)$ is uniform on $\FF_q\times\FF_q$, as desired.
\qed

\subsection{Proof of \Cor~\ref{Prop_hom}}\label{Sec_hom}
We begin with the following simple fact.

\begin{lemma}\label{Lemma_eqFact_simpleOverlap1} 
Let $d>0$. If \eqref{eqOverlap2} holds, then
	\begin{align}\label{eqFact_simpleOverlap1}
	\lim_{n\to\infty}\frac1{n^2}\sum_{\substack{1\leq i<j\leq n\\\sigma,\tau\in\FF_q}}\Erw\abs{\bck{\vecone\{\vx_i=\sigma,\vx_j=\tau\}}_{\vA,0}-q^{-2}}&=0.
	\end{align}
\end{lemma}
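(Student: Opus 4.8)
The plan is to deduce \eqref{eqFact_simpleOverlap1} from \Lem~\ref{Prop_kernel} together with the hypothesis \eqref{eqOverlap2}. The key observation is that \Lem~\ref{Prop_kernel} controls the joint distribution of two coordinates $\vx_i,\vx_j$ under $\bck\nix_{A,0}$ according to which classes $S_u(A),S_v(A)$ the indices $i,j$ fall into. Specifically, by part (iii) of \Lem~\ref{Prop_kernel}, whenever $i\in S_u(A)$ and $j\in S_v(A)$ with $1\le u<v$, the joint distribution is \emph{exactly} uniform on $\FF_q\times\FF_q$, so that $\bck{\vecone\{\vx_i=\sigma,\vx_j=\tau\}}_{A,0}-q^{-2}=0$ for all $\sigma,\tau$. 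Hence the only pairs $\{i,j\}$ that can contribute a non-zero term to the inner sum are those for which both indices lie in the \emph{same} class $S_u(A)$ for some $u\ge0$ (including $u=0$). For such a pair, $\abs{\bck{\vecone\{\vx_i=\sigma,\vx_j=\tau\}}_{A,0}-q^{-2}}$ is trivially bounded by $1$ (indeed by $1-q^{-2}$), so each same-class pair contributes at most $q^2$ to the sum over $\sigma,\tau$.

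The next step is to bound the number of same-class pairs. The classes $S_0(A),S_1(A),\ldots,S_n(A)$ partition $[n]$, so the number of unordered pairs $\{i,j\}$ lying in a common class is
\[
\sum_{u\ge0}\binom{|S_u(A)|}{2}\le\sum_{u\ge0}|S_u(A)|^2\le\bc{\max_{u\ge0}|S_u(A)|}\sum_{u\ge0}|S_u(A)|=n\cdot\max_{u\ge0}|S_u(A)|.
\]
Therefore the left-hand side of \eqref{eqFact_simpleOverlap1} is bounded by
\[
\frac1{n^2}\cdot q^2\cdot\Erw\brk{n\max_{u\ge0}|S_u(\vA)|}=\frac{q^2}n\Erw\brk{\max_{u\ge0}|S_u(\vA)|},
\]
and this tends to $0$ as $n\to\infty$ by the hypothesis \eqref{eqOverlap2}. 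This completes the proof.

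Since $q$ and $k$ are fixed, there is nothing delicate here; the proof is essentially a bookkeeping argument that translates the structural statement of \Lem~\ref{Prop_kernel} into an averaged estimate. The only point that requires any care is making sure the count of same-class pairs is genuinely controlled by $n\cdot\max_u|S_u(\vA)|$ rather than by a quantity like $\sum_u|S_u(\vA)|^2$ that could a priori be as large as $n^2$; this is exactly why the hypothesis \eqref{eqOverlap2} is phrased in terms of the \emph{maximum} class size and not, say, the number of classes. Conceptually, \eqref{eqOverlap2} says that no single "direction" in the kernel ties together more than a vanishing fraction of the coordinates, and \Lem~\ref{Prop_kernel}(iii) says that coordinates from different directions are perfectly decorrelated in the homogeneous system; together these immediately force the pairwise overlap statistics to concentrate at the product value $q^{-2}$.
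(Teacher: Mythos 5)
Your proposal follows the same route as the paper: invoke \Lem~\ref{Prop_kernel}(iii) to show that pairs $i,j$ lying in distinct classes of positive index contribute nothing, and then bound the number of exceptional pairs by $O(n\max_{u}|S_u(\vA)|)$, which \eqref{eqOverlap2} makes $o(n^2)$ in expectation. However, there is a slip in your case analysis. Part (iii) of \Lem~\ref{Prop_kernel} only covers pairs with $i\in S_u(\vA)$, $j\in S_v(\vA)$ for $1\leq u<v$. Consequently the complement of the set it handles is \emph{not} the set of same-class pairs: a pair with $i\in S_0(\vA)$ and $j\in S_v(\vA)$ for some $v\geq1$ lies in two different classes, yet is not covered by part (iii), and it genuinely contributes --- by parts (i) and (ii) one has $\bck{\vecone\{\vx_i=\sigma,\vx_j=\tau\}}_{\vA,0}=\vecone\{\sigma=0\}/q\neq q^{-2}$. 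Your count $\sum_{u}\binom{|S_u(\vA)|}{2}$ omits these mixed pairs, so the claim that ``the only pairs that can contribute are those in the same class'' is false as stated.

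The gap is easily repaired: the number of mixed pairs involving $S_0(\vA)$ is at most $n\,|S_0(\vA)|\leq n\max_{u\geq0}|S_u(\vA)|$, so adding them merely doubles your final estimate to $2q^2\,\Erw[\max_{u\geq0}|S_u(\vA)|]/n$, which still tends to $0$ under \eqref{eqOverlap2}. With that one additional case the argument is complete and matches the paper's (the paper phrases the good event as ``$i\in S_h$, $j\in S_{h'}$ with $h\neq h'$ and $h,h'\in[n]$'', which correctly excludes indices in $S_0$ from the start).
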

\begin{proof}
If \eqref{eqOverlap2} is satisfied, then \whp\ for all but $o(n^2)$ pairs $1\leq i<j\leq n$ there exist $h,h'\in[n]$, $h\neq h'$ such that
$i\in S_h(\vA)$ and $j\in S_{h'}(\vA)$.
For such pairs part (iii) of \Lem~\ref{Prop_kernel} shows that $\bck{\vecone\{\vx_i=\sigma,\vx_j=\tau\}}_{\vA,0}=q^{-2}$.
\end{proof}

Hence, we may assume that \eqref{eqFact_simpleOverlap1} is satisfied.
If so, then clearly there is a sequence $\eps=\eps_n=o(1)$, $\eps=\Omega((\ln n)^{-1})$, such that with high probability, 
	\begin{align}\label{eqFact_simpleOverlap1_1}
	\sum_{i,j=1}^n\abs{\bck{\vecone\{\vx_i=\sigma,\vx_j=\tau\}}_{\vA,0}-q^{-2}}&\le\eps^8 n^2&\mbox{for all }\sigma,\tau\in\FF_q.
	\end{align}
We begin by pointing out that (\ref{eqFact_simpleOverlap1_1}) implies that for any set $I\subset[n]$ and any $\sigma\in\FF_q$ the number of positions $i\in I$ such that $\vx_i=\sigma$ is concentrated about $q^{-1}|I|$.
More precisely, we have the following.

\begin{lemma}\label{Lemma_Y}
If (\ref{eqFact_simpleOverlap1_1}) holds for the matrix $A$, then for any set $I\subset[n]$ the following is true.
Let $\cY_{I,\sigma}(\vx)$ be the number of positions $i\in I$ such that $\vx_i=\sigma$.
Then $\bck{\vecone\cbc{\abs{\cY_{I,\sigma}(\vx)-q^{-1}|I|}>\eps^2n}}_{A,0}\leq\eps^2.$
\end{lemma}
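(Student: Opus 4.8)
The plan is to bound the second moment (over the random solution $\vx$) of the count $\cY_{I,\sigma}(\vx)$ and then apply Chebyshev's inequality. Fix $I\subset[n]$ and $\sigma\in\FF_q$. First I would write
$\cY_{I,\sigma}(\vx)=\sum_{i\in I}\vecone\{\vx_i=\sigma\}$, so that
$\bck{\cY_{I,\sigma}}_{A,0}=\sum_{i\in I}\bck{\vecone\{\vx_i=\sigma\}}_{A,0}$ and
$\bck{\cY_{I,\sigma}^2}_{A,0}=\sum_{i,j\in I}\bck{\vecone\{\vx_i=\sigma,\vx_j=\sigma\}}_{A,0}$.
The point of hypothesis (\ref{eqFact_simpleOverlap1_1}) is precisely that these pairwise correlations are, on average, close to the product value $q^{-2}$. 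Summing (\ref{eqFact_simpleOverlap1_1}) over $\sigma=\tau$ (or just using the single relevant term) and restricting the sum to $i,j\in I$, which can only decrease the absolute-value sum, gives
$\sum_{i,j\in I}\abs{\bck{\vecone\{\vx_i=\sigma,\vx_j=\sigma\}}_{A,0}-q^{-2}}\le\eps^8n^2$.
By the same token $\sum_{i\in I}\abs{\bck{\vecone\{\vx_i=\sigma\}}_{A,0}-q^{-1}}$ is small: indeed for each $i$ this quantity is at most $\sum_{j\in[n]}\abs{\bck{\vecone\{\vx_i=\sigma,\vx_j=\tau\}}_{A,0}-q^{-2}}$ summed suitably over $\tau$, which is $O(\eps^8 n^2/n)$ on average over $i$; more cleanly, Lemma~\ref{Prop_kernel} already tells us $\bck{\vecone\{\vx_i=\sigma\}}_{A,0}\in\{0,1/q,1\}$, being $1/q$ for all $i\notin S_0(A)$, so the deviation from $q^{-1}|I|$ coming from the first moment is controlled by $|S_0(A)\cap I|+|\{i:\bck{\vecone\{\vx_i=\sigma\}}=1\}\cap I|$, and both of these sets have size $O(\eps^8 n)$ by (\ref{eqFact_simpleOverlap1_1}).

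Combining these, I would estimate the variance:
$\bck{\cY_{I,\sigma}^2}_{A,0}-\bck{\cY_{I,\sigma}}_{A,0}^2
=\sum_{i,j\in I}\bc{\bck{\vecone\{\vx_i=\sigma,\vx_j=\sigma\}}_{A,0}-\bck{\vecone\{\vx_i=\sigma\}}_{A,0}\bck{\vecone\{\vx_j=\sigma\}}_{A,0}}$,
and each summand is, up to $O(\eps^8)$ additive errors absorbed from the first-moment deviations, bounded in absolute value by $\abs{\bck{\vecone\{\vx_i=\sigma,\vx_j=\sigma\}}_{A,0}-q^{-2}}$ plus lower-order terms. Hence the variance is at most $O(\eps^8 n^2)$. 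Likewise $\bck{\cY_{I,\sigma}}_{A,0}=q^{-1}|I|+O(\eps^8 n)$, so
$\bck{\bc{\cY_{I,\sigma}-q^{-1}|I|}^2}_{A,0}\le O(\eps^8n^2)$.
Chebyshev's inequality then yields
$\bck{\vecone\cbc{\abs{\cY_{I,\sigma}-q^{-1}|I|}>\eps^2n}}_{A,0}\le O(\eps^8n^2)/(\eps^2 n)^2=O(\eps^4)\le\eps^2$
for $n$ large (so that $\eps=\eps_n$ is small enough that the hidden constant is dominated, possibly after shrinking the exponent bookkeeping; note the statement leaves slack since $\eps^8$ versus $\eps^2$ is very generous).

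The only mild subtlety — and the one place to be careful — is that the hypothesis (\ref{eqFact_simpleOverlap1_1}) controls a sum over \emph{all} pairs $i,j\in[n]$, not over pairs in $I$; restricting to $I\times I$ is harmless because every term $\abs{\bck{\vecone\{\vx_i=\sigma,\vx_j=\sigma\}}_{A,0}-q^{-2}}$ is nonnegative, so the restricted sum is bounded by the full sum $\le\eps^8n^2$, and the bound is uniform in $I$, which is exactly what the lemma asserts. The generous gap between the $\eps^8$ in the hypothesis and the $\eps^2$ in the conclusion is what makes the Chebyshev step go through with room to spare, so no delicate optimisation of constants is needed.
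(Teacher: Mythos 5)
Your proposal is correct and follows essentially the same route as the paper: express $\cY_{I,\sigma}$ as a sum of indicators, use \eqref{eqFact_simpleOverlap1_1} (together with the averaging/triangle-inequality step, or equivalently \Lem~\ref{Prop_kernel}) to pin the first moment to $q^{-1}|I|+O(\eps^8 n)$, bound the second moment by restricting the nonnegative sum in \eqref{eqFact_simpleOverlap1_1} to $I\times I$, and finish with Chebyshev, where the slack between $\eps^8$ and $\eps^2$ makes the constants immaterial. No gaps.
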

\begin{proof}
Due to (\ref{eqFact_simpleOverlap1_1}) and the triangle inequality, for every $\sigma\in\FF_q$ we have
	\begin{align}\label{eqFact_simpleOverlap1_3}
	\sum_{i=1}^n\abs{\bck{\vecone\{\vx_i=\sigma\}}_{A,0}-q^{-1}}&=
		\sum_{i=1}^n\abs{\frac1{n-1}\sum_{j\neq i}\sum_{\tau\in\FF_q}\bc{\bck{\vecone\{\vx_i=\sigma,\vx_j=\tau\}}_{A,0}-q^{-2}}}
			\leq \eps^8qn.
	\end{align}
Since $\bck{\cY_{I,\sigma}(\vx)}_{A,0}=\sum_{i\in I}\bck{\vecone\{\vx_i=\sigma\}}_{A,0}$, (\ref{eqFact_simpleOverlap1_3}) yields
	\begin{align}\label{eqFact_simpleOverlap1_4}
	\abs{\bck{\cY_{I,\sigma}(\vx)}_{A,0}-q^{-1}|I|}&\leq\sum_{i=1}^n\abs{\bck{\vecone\{\vx_i=\sigma\}}_{A,0}-1/q}
		\leq\eps^8qn.
	\end{align}
Further, once more due to (\ref{eqFact_simpleOverlap1_1}) the second moment of $\cY_{I,\sigma}$ works out to be
	\begin{align}
	\bck{\cY_{I,\sigma}(\vx)^2}_{\vA,0}&=
		\sum_{i,j\in I}\bck{\vecone\{\vx_i=\sigma,\vx_j=\sigma\}}_{\vA,0}\nonumber\\
		&\leq q^{-2}|I|^2+
			\sum_{i,j\in I}\abs{\bck{\vecone\{\vx_i=\sigma,\vx_j=\sigma\}}_{\vA,0}-q^{-2}}
			\leq q^{-2}|I|^2+\eps^8n^2.\label{eqFact_simpleOverlap1_5}
	\end{align}
Since $\eps=\Omega((\ln n)^{-1})$ does not vanish too rapidly,  (\ref{eqFact_simpleOverlap1_4}), (\ref{eqFact_simpleOverlap1_5}) and Chebyshev's inequality yield
	\begin{align*}
	\bck{\vecone\cbc{\abs{\cY_{I,\sigma}(\vx)-q^{-1}|I|}>\eps^2n}}_{\vA,0}&\leq
		\eps^{-4}n^{-2}\bc{\bck{\cY_{I,\sigma}(\vx)^2}_{\vA,0}-\bck{\cY_{I,\sigma}(\vx)}_{\vA,0}^2}\leq\eps^2,
	\end{align*}
as desired.
\end{proof}

\begin{proof}[Proof of \Cor~\ref{Prop_hom}]
For $y\in\Im(\vA)$ the set of solutions to the linear system $\vA x=y$ is simply the affine translation $z+\ker(\vA)$ of the kernel
for an inverse image $z$ of $y$.
Therefore, it suffices to show that if $\vA$ satisfies (\ref{eqFact_simpleOverlap1_1}), then
	\begin{align}\label{eqFact_simpleOverlap1_2}
	\bck{\abs{\omega_{\sigma,\tau}(\vx,\vx')-q^{-2}}}_{\vA,y}
		=\bck{\abs{\omega_{\sigma,\tau}(\vx-z,\vx'-z)-q^{-2}}}_{\vA,0}
		&\leq\left(6+\frac{2}{q^2}\right)\eps^2 &\mbox{for all }\sigma,\tau\in\FF_q.
	\end{align}
We will prove that (\ref{eqFact_simpleOverlap1_2}) is true for any matrix $A$ that satisfies (\ref{eqFact_simpleOverlap1_1})
and any $y\in\Im(A)$. Our interest is in the number of entries $i \in [n]$ for which $\vx_i-z_i=\sigma$ and $\vx'_i-z_i=\tau$, and so it is natural to first look at the positions $i \in [n]$ which have $\vx_i-z_i=\sigma$ and, in a next step, for the positions \textit{among these} for which $\vx'_i-z_i=\tau$, see the following scheme:
\begin{equation*}
\setlength{\jot}{15pt}
\begin{aligned}
\vx -z &= \big(
 \quad \ast \qquad \ast \qquad \ast \qquad \smash[b]{\overbrace{\underset{\hspace{-1.2 cm} \downarrow \hspace{0.5 cm}\downarrow \hspace{0.5 cm} \downarrow}{\boxed{\begin{matrix}\sigma \quad \cdots \quad \sigma \quad \cdots \quad \sigma \\ \end{matrix}}}}^{I(\sigma,\vx-z)}} \begin{matrix}&&\ast\end{matrix}
 \hspace{0.47 cm}\big)\\
 \vx' - z&= 
  \big(
 \quad \ast \qquad \ast \qquad \ast \qquad  \boxed{\begin{matrix}\tau \quad \cdots \quad \tau\end{matrix}} \begin{matrix} & \ast && \ast &&\end{matrix} \begin{matrix}\ast &\end{matrix} \big).
 \end{aligned}
 \end{equation*}
First, as $\vx-z$ is a uniform solution of the homogeneous system, \Lem~\ref{Lemma_Y} yields
	\begin{align}\label{eqFact_simpleOverlap1_9}
	\bck{\vecone\cbc{\abs{\cY_{[n],\sigma}(\vx-z)-q^{-1}n}>\eps^2n}}_{A,0}&\leq\eps^2&\mbox{for every }\sigma\in\FF_q.
	\end{align}
Additionally, given $\vx-z$, let $I(\sigma,\vx-z)$ be the set of all $i\in [n]$ such that $\vx_i-z_i=\sigma$.
Since $\vx'$ is chosen independently of $\vx$, a second application of \Lem~\ref{Lemma_Y} shows that
	\begin{align}\label{eqFact_simpleOverlap1_10}
	\bck{\vecone\cbc{\abs{\cY_{I(\sigma,\vx-z),\tau}(\vx'-z)-q^{-1}|I(\sigma,\vx-z)|}>\eps^2n}\,\big|\,\vx-z}_{A,0}&\leq\eps^2&\mbox{for every }\tau\in\FF_q.
	\end{align}
Thus, as $\cY_{I(\sigma,\vx-z),\tau}(\vx'-z) = n \omega_{\sigma,\tau}(\vx-z,\vx'-z)$, equations (\ref{eqFact_simpleOverlap1_9}) and (\ref{eqFact_simpleOverlap1_10}) imply by averaging over $\vx-z$ that
	\begin{align*}
  \bck{\vecone\cbc{	\abs{\omega_{\sigma,\tau}(\vx-z,\vx'-z)-q^{-2}}>4\eps^2}}_{A,0}&\leq2\eps^2,
	\end{align*}
and thus
	\begin{align}\label{eqFact_simpleOverlap1_12}
	\bck{\abs{\omega_{\sigma,\tau}(\vx-z,\vx'-z)-q^{-2}}}_{A,0}&
	\leq \left(6+\frac{2}{q^2}\right)\eps^2.
	\end{align}
Finally, since \Lem~\ref{Lemma_eqFact_simpleOverlap1} implies that $\vA$ satisfies (\ref{eqFact_simpleOverlap1_1}) \whp, the assertion follows from (\ref{eqFact_simpleOverlap1_2}) and (\ref{eqFact_simpleOverlap1_12}).
\end{proof}

\subsection{Proof of \Lem~\ref{Prop_opt}}\label{Sec_Prop_opt}
The first two derivatives of $\phi$ work out to be
	\begin{align*}
	\phi'(\alpha)&=d(k-1)\alpha^{k-2}\bc{1-\alpha-\exp\bc{-d \alpha^{k-1}}},\\
	\phi''(\alpha)&=d(k-1)(k-2)\alpha^{k-3}\bc{1-\alpha-\exp\bc{-d \alpha^{k-1}}}
		-d(k-1)\alpha^{k-2}\bc{1-d(k-1)\alpha^{k-2}\exp\bc{-d \alpha^{k-1}}}.
	\end{align*}
Thus, the zeros of the first derivatives are precisely the fixed points of the function
	$$f(\alpha)=1-\exp\bc{-d \alpha^{k-1}},$$
one of which is $\alpha=0$.
Moreover, since $f(1)<0$ and thus $\phi'(1)<0$, the maximum of $\phi$ cannot be attained at $\alpha=1$.
Consequently, the maximum is attained at one of the fixed points of $f$ in $[0,1)$.
The derivatives of $f$ are
	\begin{align*}
	f'(\alpha)&=d(k-1)\alpha^{k-2}\exp\bc{-d \alpha^{k-1}},&
	f''(\alpha)&=d(k-1)\alpha^{k-3}\exp\bc{-d \alpha^{k-1}}\bc{k-2-d(k-1)\alpha^{k-1}}.
	\end{align*}
Hence, $f''$ has only one positive root, namely $\alpha=[(k-2)/(d(k-1))]^{1/(k-1)}$, and $f''(\alpha)>0$ for small $\alpha>0$.
Therefore, the set $\cA$ of fixed points of $f$ in $[0,1)$ has size $|\cA|\leq3$.
Further, if $|\cA|=3$, then the smaller positive fixed point is unstable, i.e., the derivative of $f$ is strictly greater than one at this point.

Plugging the fixed points $\alpha\in\cA$ into the second derivative of $\phi$, we find
	\begin{align*}
	\phi''(\alpha)&=-d(k-1)\alpha^{k-2}\bc{1-f'(\alpha)}.
	\end{align*}
Therefore, 
the global maximum of $\phi$ on $[0,1]$ is attained either at $\alpha=0$ or at the largest fixed point $\rho_{k,d}$ of $f$.
But while $\phi(0)=1-d/k$, the definition (\ref{eqdk}) of $d_k$ guarantees 
	\begin{align*}
	\phi(\rho_{k,d})&=\exp\bc{-d\rho_{k,d}^{k-1}}+d\rho_{k,d}^{k-1}-\frac{d(k-1)}{k}\rho_{k,d}^k-\frac dk<1-\frac dk&\mbox{for $d<d_k$.}
	\end{align*}
Hence, if $d\leq d_k$, then $\phi(\alpha)\leq1-d/k$ for all $\alpha$, 
\aco{while in the case $d>d_k$ the maximum is attained at $\rho_{k,d}$.} \qed

\section{Proof of \Prop~\ref{Prop_freeEnergy}}\label{Sec_Prop_freeEnergy}
\noindent
As outlined in \Sec~\ref{Sec_strategy}, 
we are going to couple the random linear system $\vA_n$ with $n$ variables and the system $\vA_{n+1}$ with $n+1$ variables
and calculate the expected change $\Erw[\nul(\vA_{n+1})-\nul(\vA_n)]$ in nullity.
Roughly speaking, to perform this calculation we will determine the probability that a random vector $\vx\in\ker\vA_n$, extended by a specific further entry $x_{n+1}\in\FF_q$, also belongs to the kernel of $\vA_{n+1}$.
But to this end we need to get a handle on the uniform distribution on the kernel of $\vA_n$.
While we do not know how to get a handle on this distribution directly,
in the following subsection we will see that a slight random perturbation of $\vA_n$ that does not affect the nullity  significantly very likely leaves us with an easy-to-describe distribution.
Based on this result we will subsequently carry out the coupling argument and show that the formula for the change in nullity matches the expression stated in \Prop~\ref{Prop_freeEnergy}.

\subsection{Random solutions of random 
equations}
For a matrix $A\in\FF_q^{m\times n}$ let $\mu_A\in\cP(\FF_q^n)$ signify 
the distribution 
	$$\mu_A(\sigma)=\vecone\{\sigma\in\ker(A)\}/Z(A),$$
where, recalling (\ref{eqZnullity}), $Z(A)=|\ker(A)|$.
Remembering the notion of $(\eps,2)$-symmetry from \Sec~\ref{Sec_prelims}, we write $\mu_{A,i}$ for the marginal distribution of the $i$'th entry $\vec x_i$
of a random element $\vx\in\ker(A)$ and $\mu_{A,i,j}$ for the joint distribution of the entries $(\vec x_i,\vec x_j)$.
While we may not be able to investigate the distribution $\mu_{A}$ directly, the following lemma shows that 
{`freezing'} a bounded number of randomly chosen variables to zero likely leaves us with an $(\eps,2)$-symmetric distribution. `Freezing' a bounded number of variables has the same effect as `pinning' the variables as in the {\em pinning lemma}, which were applied in other random constraint satisfaction problems, cf.\cite[Theorem 2.2]{Montanari}. However the nice properties in Lemma~\ref{Prop_kernel} due to the linear structure of the kernel of linear systems make the proof simpler than usual. Note that the lemma works for any arbitrary matrix $A$. 

\begin{lemma}\label{Lemma_0pinning} 
For any $0<\eps<1$ there exists an integer $T \in \mathbb{N}$ such that for any $A\in\FF_q^{m\times n}$ the following is true.
Choose $\vec \theta \in [T]$ uniformly at random and independent, identically distributed uniform indices $\vi_1, \vi_2, \ldots \in[n]$, which are also independent of $\vec \theta$. Let $\vI$ be {the $(\vec\theta-1) \times n$ matrix} with entries
	$\vI_{st}=\vecone\{\vi_s=t\}$.
Additionally, let
	$A[\vi_1,\ldots,\vi_{\vec \theta-1}]=\bink{A}{\vI}.$
Then $\pr\brk{\mu_{A[\vi_1,\ldots,\vi_{\vec \theta-1}]}\mbox{ is $(\eps,2)$-symmetric}}>1-\eps.$
\end{lemma}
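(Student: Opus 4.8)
The plan is to exploit the fact that ``pinning'' (conditioning, or here ``freezing'') a random bounded number of coordinates to their typical values is a general device for making a measure on a discrete cube $(\eps,2)$-symmetric, combined with the algebraic structure exposed in \Lem~\ref{Prop_kernel}. The starting observation is that, by \Lem~\ref{Prop_kernel}, for $\mu_A$ the uniform distribution on $\ker(A)$ the obstruction to $(\eps,2)$-symmetry is entirely encoded by the partition $S_0(A),S_1(A),\ldots,S_n(A)$: two coordinates $i,j$ in \emph{distinct} nontrivial classes are already exactly independent and uniform (part (iii)), so the only ``bad'' pairs are those with $i\in S_0$, those with $j\in S_0$, and those with $i,j$ in the same class $S_u$, $u\ge1$. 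Thus $\mu_A$ fails $(\eps,2)$-symmetry essentially only when either $|S_0(A)|$ is a constant fraction of $n$ or some class $S_u(A)$, $u\ge1$, has size $\Omega(\sqrt\eps\, n)$, i.e.\ when the ``weight'' $\sum_{u\ge0}|S_u(A)|^2$ is large. The key point is that freezing $\vx_i\leftarrow0$ for a uniformly random index $i$ destroys exactly the class of $i$: if $i\in S_u(A)$ with $u\ge1$, then in $A[\vi]=\binom{A}{\mathbf e_i^\top}$ every coordinate formerly tied to $i$ becomes frozen into $S_0$, so the new measure loses that class entirely (it moves all of $S_u$ into $S_0$), and one checks this does not create new nontrivial ties. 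Hence the vector of class sizes of $A[\vi_1,\ldots,\vi_{\vec\theta-1}]$ is obtained from that of $A$ by repeatedly deleting, at each step, the class that a size-biased random coordinate lands in.

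Formally, I would run the following argument. Fix $\eps$ and set $T=T(\eps)$ to be chosen. Define the potential $\Phi_t=\frac1{n^2}\sum_{u\ge1}|S_u(A[\vi_1,\ldots,\vi_t])|^2$, a number in $[0,1]$ that is non-increasing in $t$ (freezing can only merge classes into $S_0$ and split nothing). If at step $t$ the largest nontrivial class has size $\ge\delta n$ for some threshold $\delta=\delta(\eps)$, then with probability $\ge\delta$ a fresh uniform index $\vi_{t+1}$ lands in it and $\Phi_{t+1}\le\Phi_t-\delta^2$; so in expectation $\Phi$ decreases by at least $\delta^3$ per step as long as a $\ge\delta n$-class survives. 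Since $\Phi_0\le1$, after $T\ge\delta^{-3}\eps^{-1}$ steps the expected number of steps at which a $\ge\delta n$-class still existed is at most $\delta^{-3}$, so a uniformly random $\vec\theta\in[T]$ hits a step with no $\ge\delta n$-class with probability $\ge1-\delta^{-3}/T\ge1-\eps$. (This is the standard pinning/telescoping bound; one has to be a little careful that $\vec\theta$ is chosen \emph{independently} of the $\vi_s$, so the final bound is an average over $\vec\theta$ of the per-step failure probabilities, exactly as in \cite{Victor}.) It remains to argue: (a) $|S_0|$ being large is harmless — indeed coordinates in $S_0$ are deterministically $0$, hence deterministically independent of everything, so they contribute nothing to the $(\eps,2)$-symmetry sum; and (b) if every nontrivial class of $A[\vi_1,\ldots,\vi_{\vec\theta-1}]$ has size $<\delta n$, then
\begin{align*}
\sum_{1\le i<j\le n}\TV{\mu_{i,j}-\mu_i\otimes\mu_j}
&\le\sum_{u\ge1}\binom{|S_u|}{2}\cdot1\ \le\ \frac12\,\delta n\sum_{u\ge1}|S_u|\ \le\ \frac{\delta}{2}n^2<\eps n^2
\end{align*}
for $\delta<2\eps$, using that distinct-class and $S_0$-involving pairs contribute $0$ by \Lem~\ref{Prop_kernel}. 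Choosing $\delta=\min(\eps,\eps/2)$ and then $T$ accordingly closes the argument.

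The main obstacle I anticipate is \textbf{(i)} verifying cleanly that the operation $A\mapsto A[\vi]$ acts on the partition the way I claimed — i.e.\ that appending the row $\mathbf e_i^\top$ moves precisely $S_u(A)$ (the class of $i$) into $S_0$ and leaves the remaining classes, and in particular their mutual pairwise independence structure from part (iii), untouched. This needs a short linear-algebra lemma: adding the constraint $x_i=0$ intersects $\ker(A)$ with a hyperplane, and because by part (ii) every $j\in S_u(A)$ satisfies $x_j=s\,x_i$ identically on $\ker(A)$, the new kernel forces $x_j=0$ for all such $j$; conversely a basis-counting argument (as in the proof of \Lem~\ref{Prop_kernel}) shows no new deterministic relations among the surviving coordinates appear, so their joint law is still the uniform product law of part (iii). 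A secondary, more bookkeeping-type point \textbf{(ii)} is that $\vec\theta$ ranges over $[T]$ while the randomness $\vi_1,\vi_2,\ldots$ is infinite and shared across values of $\vec\theta$ in the natural coupling; one must state the monotone-potential estimate as a bound on $\frac1T\sum_{t=0}^{T-1}\pr[\text{large class survives at step }t]$ and not accidentally assume independence between $\vec\theta$ and the $\vi_s$ in the wrong place. Both obstacles are, I expect, routine once set up correctly, and the essential content is the first paragraph: the algebraic rigidity from \Lem~\ref{Prop_kernel} collapses $(\eps,2)$-symmetry to a statement purely about the sizes of the classes, and pinning a bounded number of coordinates kills the large classes.
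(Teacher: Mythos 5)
Your overall strategy --- reduce $(\eps,2)$-symmetry to the class sizes via \Lem~\ref{Prop_kernel}, observe that pinning a uniformly random coordinate absorbs its entire class into $S_0$, and average over a uniform $\vec\theta\in[T]$ via a telescoping bound --- is exactly the paper's. But the specific progress estimate you run is broken: the potential $\Phi_t=n^{-2}\sum_{u\ge1}|S_u(A[\vi_1,\ldots,\vi_t])|^2$ is \emph{not} non-increasing, and the claim in your obstacle (i) that ``no new deterministic relations among the surviving coordinates appear'' is false. Appending the row $x_i=0$ intersects $\ker(A)$ with a hyperplane, and two coordinate functionals that were linearly independent on $\ker(A)$ can become proportional on that intersection. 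Concretely, take $A=(1,1,-1)\in\FF_q^{1\times3}$, so that $\ker(A)=\{(a,b,a+b):a,b\in\FF_q\}$ and $S_1=\{1\}$, $S_2=\{2\}$, $S_3=\{3\}$ are three singleton classes; pinning $x_1=0$ leaves $\{(0,b,b)\}$, so the classes $\{2\}$ and $\{3\}$ merge into a single nontrivial class. In general up to $q$ nontrivial classes can coalesce at one pinning step, so $\sum_{u\ge1}|S_u|^2$ can jump upward, both when the pinned index misses the large class and (partially offsetting the claimed drop of $\delta^2$) when it hits it. Consequently neither ``$\Phi_{t+1}\le\Phi_t-\delta^2$ with probability $\ge\delta$'' nor ``$\Phi$ decreases by at least $\delta^3$ in expectation per bad step'' follows, and your bound $\Erw[N]\le\delta^{-3}$ on the number of steps carrying a large class is unproved.

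The gap is repairable, and the paper's proof shows how: track $|S_0(A[\vi_1,\ldots,\vi_t])|$ instead. This quantity genuinely is monotone (a coordinate frozen to $0$ stays frozen when further rows are added), its increments $\Delta_t$ are nonnegative and sum to at most $n$, and whenever a class of size at least $\eps n$ survives at step $\vec\theta-1$, the next uniform index lands in it with probability at least $\eps$ and then forces $\Delta_{\vec\theta}\ge\eps n$ --- only part (ii) of \Lem~\ref{Prop_kernel} is needed here, and mergers among the surviving classes are irrelevant. Since at most $\lfloor1/\eps\rfloor$ of the $T$ increments can exceed $\eps n$, taking $T=\lceil\eps^{-3}\rceil$ and averaging over $\vec\theta$ yields the desired contradiction. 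The remaining ingredients of your proposal --- bounding the symmetry sum by $\sum_{u\ge1}|S_u|^2$, the harmlessness of $S_0$ (whose marginals are point masses), and the correct handling of the independence of $\vec\theta$ from the $\vi_s$ --- are sound and match the paper.
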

\begin{proof}
Recall the decomposition $S_0(A), S_1(A),\ldots, S_{n}(A)$ of the index set $[n]$ corresponding to the matrix $A\in\FF_q^{m\times n}$ from \Lem~\ref{Prop_kernel}, which has the following three properties: First, all solutions $x \in \ker(A)$ have $x_i=0$ for all $i \in S_0(A)$. Second, if $i, j$ are elements of the same class $S_u(A)$ for $u \geq 1$ and $\vx$ is a uniform vector in $\ker(A)$, there exists $s \in \FF_q^{\ast}$ such that the joint distribution of $\vec x_i, \vec x_j$ is the uniform distribution on the set $\{(t, st): t \in \FF_q\}$.
Finally, if $i, j$ belong to different classes $S_u(A), S_v(A)$ with $u, v \geq 1$, then the joint distribution of $\vec x_i$ and $\vec x_j$ is the uniform distribution on $\FF_q^2$. In particular, $\vec x_i$ and $\vec x_j$ are independent. 

Hence, $\mu_{A, i,j}-\mu_{A,i}\tensor \mu_{A,j} = 0$ unless $i,j$ belong to the same class $S_u(A)$ for some $u\geq1$. The proof idea is to show that the addition of the rows $\vI$ has the impact that the sets $S_1(A[\vi_1,\ldots,\vi_{\vec \theta-1}]),\ldots, S_{n}(A[\vi_1,\ldots,\vi_{\vec \theta-1}])$ of the modified matrix are relatively small: 
	\begin{align}\label{eqLemma_0pinning1}
	\pr\brk{\max_{u\geq1}|S_u(A[\vi_1,\ldots,\vi_{\vec \theta-1}])|\geq\eps n}&<\eps.
	\end{align}
In order to prove (\ref{eqLemma_0pinning1}), it is enough to keep track of the evolution of $(S_0(A[\vi_1, \ldots, \vi_{t}]))_{t \geq 0}:$ Adding one more row to $A[\vi_1, \ldots, \vi_{t-1}]$ can only increase the set $S_0(A[\vi_1, \ldots, \vi_{t-1}])$. More precisely, if $\vi_t \in S_u(A[\vi_1, \ldots, \vi_{t-1}])$ for $u \geq 0$, then 
$$S_0(A[\vi_1, \ldots, \vi_{t}]) = S_0(A[\vi_1, \ldots, \vi_{t-1}]) \cup S_u(A[\vi_1, \ldots, \vi_{t-1}]).$$
Now set $T=\lceil 1/\eps^3 \rceil$ and for $1 \leq t \leq T$, 
\begin{align*}
\Delta_t = \left|S_0(A[\vi_1,\ldots,\vi_{t}]) \right| - \left|S_0(A[\vi_1,\ldots,\vi_{t-1}]) \right|.
\end{align*}
Assume that (\ref{eqLemma_0pinning1}) is not true, but instead we have $\pr\brk{\max_{u\geq1}|S_u(A[\vi_1,\ldots,\vi_{\vec \theta-1}])|\geq\eps n}\geq \eps.$ Because given $\vec \theta$, the index $\vec i_{\vec \theta}$ is uniformly distributed on $[n]$, this implies $\pr\bc{\Delta_{\vec \theta}\geq \eps n \vert \max_{u\geq1}|S_u(A[\vi_1,\ldots,\vi_{\vec \theta-1}])|\geq\eps n} \geq \eps$. As only the variables in the set $S_u(A[\vi_1,\ldots,\vi_{\vec \theta-1}])$, from which the index $\vec i_{\vec \theta}$ is chosen, are set to zero when going from $\vec \theta-1$ to $\vec \theta$, this implies
\begin{align} \label{Delta_inc}
\pr\bc{\Delta_{\vec \theta}\geq \eps n} = \pr\bc{\Delta_{\vec \theta}\geq \eps n\vert \max_{u\geq1}|S_u(A[\vi_1,\ldots,\vi_{\vec \theta-1}])|\geq\eps n} \cdot \pr\brk{\max_{u\geq1}|S_u(A[\vi_1,\ldots,\vi_{\vec \theta-1}])|\geq\eps n}\geq \eps^2.
\end{align}
On the other hand, as $\sum_{i=1}^T \Delta_t \leq n$ and $\Delta_t \geq 0$, there can be at most $\lfloor \frac{1}{\eps} \rfloor$ random variables among $\Delta_1, \ldots, \Delta_T$ which are greater than $\eps n$. Because $\vec \theta$ is chosen uniformly from $[T]$, the probability to observe a big increase in $S_0$ is bounded by
\begin{align*}
\pr\bc{\Delta_{\vec \theta}\geq \eps n} < \frac{1}{\eps T} \leq \eps^2,
\end{align*}
which contradicts (\ref{Delta_inc}). Thus, (\ref{eqLemma_0pinning1}) holds true.

Finally, if $\max_{u\geq1}S_u(A[\vi_1,\ldots,\vi_{\vec \theta-1}])<\eps n$, then
\begin{align*}
\sum_{j_1,j_2 \in [n]}&\TV{\mu_{A[\vi_1,\ldots,\vi_{\vec \theta-1}],j_1,j_2}-\mu_{A[\vi_1,\ldots,\vi_{\vec \theta-1}],j_1}\tensor \mu_{A[\vi_1,\ldots,\vi_{\vec \theta-1}],j_2}}\\&= \sum_{u=1}^n\sum_{j_1,j_2 \in S_u(A[\vi_1, \ldots, \vi_{\vec \theta-1}])}\TV{\mu_{A[\vi_1,\ldots,\vi_{\vec \theta-1}],j_1,j_2}-\mu_{A[\vi_1,\ldots,\vi_{\vec \theta-1}],j_1}\tensor\mu_{A[\vi_1,\ldots,\vi_{\vec \theta-1}],j_2}}
	\leq\sum_{u=1}^n|S_u(A[\vi_1,\ldots,\vi_{\vec \theta-1}])|^2<\eps n^2,
\end{align*}
and therefore $\mu_{A[\vi_1,\ldots,\vi_{\vec \theta-1}]}$ is $(\eps,2)$-symmetric.
Thus, the assertion follows from (\ref{eqLemma_0pinning1}).
\end{proof}

\noindent
Crucially, in \Lem~\ref{Lemma_0pinning} the maximal number $T-1$ of variables that we need to freeze depends only on $\eps$, but not on $m$, $n$ or $A$.

Given an $m\times n$ matrix $A$ and an integer $\theta\ge 1$, let 
\begin{equation}\label{eqAtheta}
	A[\theta]=\bink{A}{I(\theta)},
	\end{equation}
where $I(\theta)$ is the $(\theta-1)\times n$ matrix with entries $I(\theta)_{ij}=\vecone\{i=j\}$.

\begin{corollary}\label{Cor_0pinning}
For any $\eps>0$ there is an integer an integer $T \in \mathbb{N}$ such that for all sufficiently large $n$ and $\vec \theta \in [T]$ chosen uniformly and independently of $\vA_n$, 
	$\pr[\mu_{\vhA[\vec \theta]}\mbox{{ is $(\eps,2)$-symmetric}}]>1-\eps.$
\end{corollary}
\begin{proof}
This follows from \Lem~\ref{Lemma_0pinning} and the fact that the distribution of the random matrix $\vA$ is invariant under column permutations.
\end{proof}

\noindent
\Cor~\ref{Cor_0pinning} illustrates how the algebraic structure of the problem aids the proof.
To ensure $(\eps,2)$-symmetry, we just add the small matrix $I(\vec \theta)$.
In effect, as we shall see momentarily,  the coupling argument is quite natural.
By contrast, in the case of LDGM codes at positive noise a more complicated perturbation is needed to bring $(\eps,2)$-symmetry about
	(cf.~\cite[\Lem~3.5]{CKPZ}), which entails substantial technical difficulties in the coupling argument.

\subsection{The coupling argument}
The main step toward the proof of \Prop~\ref{Prop_freeEnergy} is the derivation of an upper bound on the nullity of $\vA[\vec\theta]$,
a task that we solve by way of a coupling argument.
We first establish a formula that emerges naturally from this argument.
Later we will show that this formula matches the expression from \Prop~\ref{Prop_freeEnergy}.

Let us write $\delta_u$ for the point mass at $u$. 
Moreover, let $\cQ(\FF_q)$ be the set of all probability distributions $\pi$ on $\cP(\FF_q)$ of the form
	\begin{align}\label{eqpialpha}
	\pi&=\alpha \delta_{\delta_0}+(1-\alpha)\delta_{\frac1q\sum_{\sigma\in\FF_q}\delta_\sigma}&(\alpha\in[0,1]).
	\end{align}
Thus, $\pi$ is the probability distribution on the simplex $\cP(\FF_q)$ that puts mass $\alpha$ on the atom $\delta_0\in\cP(\FF_q)$ that places mass $1$ on the single value $0$,
and mass $1-\alpha$ on the uniform distribution on $\FF_q$.
Further, define a functional
	$$\cB_{d,k}:\cQ(\FF_q)\to[0,\infty)$$
as follows.
For a given $\pi\in\cQ(\FF_q)$ let  $(\NU_{\pi,ij})_{i,j\geq1}$  be a {family of} samples from $\pi$,
let $(\va_{i})_{i\geq1}$ be vectors in $\FF_q^{*\,k}$ drawn from the distribution $P$ of the non-zero entries of the rows of $\vA$ and
let $\GAMMA$ be a $\Po(d)$-random variable, all  mutually independent.
Then with $x\perp y$ a shorthand for $\sum_{i=1}^kx_iy_i=0$ we  define
	\begin{align}\label{eqmyBFE}
	\cB_{d,k}(\pi)&=\cB^{+}_{d,k}(\pi)-d(1-1/k)\cB^{-}_{d,k}(\pi),\qquad\qquad\qquad\qquad\qquad\qquad\mbox{where}\\
	\cB^-_{d,k}(\pi)&=\Erw\brk{\ln{\sum_{\sigma\in\FF_q^k}
			\vecone\cbc{\va_1\perp\sigma}\prod_{j=1}^k\vec\nu_{\pi,1j}(\sigma_j)}},
			\qquad
	\cB^+_{d,k}(\pi)=
	\Erw\brk{\ln{\sum_{\chi\in\FF_q}\prod_{i=1}^\GAMMA\sum_{\sigma\in\FF_q^k}
		\vecone\cbc{\sigma_{k}=\chi,\,\va_i\perp\sigma}\prod_{j=1}^{k-1}\vec\nu_{\pi,ij}(\sigma_j)}}.\nonumber
	\end{align}
The definition of the domain $\cQ(\FF_q)$ guarantees that the arguments of the logarithms in the above formula are always positive.
In statistical physics jargon $\cB_{d,k}$ is called the {\em Bethe free entropy}~\cite{MM}.
Since for any matrix $A$ the number $Z(A)$ is related to the nullity through (\ref{eqZnullity}),
the following lemma bounds $\nul(\vhA[\vec\theta])$ from above, where we continue to use the notation
$\vA[\vec\theta]$ from (\ref{eqAtheta}).

\begin{lemma}\label{Prop_BFE}
For any $\xi>0$ there exists an integer $T=T(d,k,q,\xi)>0$ such that for $\vec \theta \in [T]$ chosen uniformly and independently of $\vA$,
	\begin{align}\label{eqProp_BFE_1}
	\limsup_{n\to\infty}\frac1n\Erw\brk{\ln Z(\vhA[\vec \theta])}\leq\sup_{\pi\in\cQ(\FF_q)}{\cB_{d,k}}(\pi)+\xi.
	\end{align}
\end{lemma}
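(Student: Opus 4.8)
The plan is to prove \eqref{eqProp_BFE_1} by the Aizenman--Sims--Starr scheme: estimate the one-step difference $\Erw[\ln Z(\vhA_{N}[\vec\theta])]-\Erw[\ln Z(\vhA_{N-1}[\vec\theta])]$ and telescope. Because the pins touch only $\vec\theta-1\le T-1$ coordinates, freezing them alters $\ln Z$ by at most $(T-1)\ln q=O(1)$, so
\[
\frac1n\Erw[\ln Z(\vhA_{n}[\vec\theta])]=\frac{O(1)}n+\frac1n\sum_{N=k}^{n}\Big(\Erw[\ln Z(\vhA_{N}[\vec\theta])]-\Erw[\ln Z(\vhA_{N-1}[\vec\theta])]\Big),
\]
and it suffices to bound each summand by $\sup_{\pi\in\cQ(\FF_q)}\cB_{d,k}(\pi)$, up to a term that vanishes as $N\to\infty$ and an $N$-independent term that can be forced below $\xi$ by choosing $T$ large.

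To estimate a single difference I would couple $\vhA_{N-1}[\vec\theta]$ and $\vhA_{N}[\vec\theta]$ through a common ``core'' $\vec B$. Split the $\Po(dN/k)$ equations of $\vhA_{N}$ into those that do not involve $x_N$ (a $\Po(d(N-k)/k)$-sized set, living on the first $N-1$ variables; after applying the $\vec\theta-1$ pins this yields $\vec B$) and those that do (a $\Po(d)$-sized set, each linking $x_N$ to $k-1$ uniformly random old variables); up to an $O(1/N)$ event about where the pins land, $\vhA_{N-1}[\vec\theta]$ arises from $\vec B$ by adding a further $\Po(d(1-1/k))$ equations on $k$ uniformly random old variables. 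Writing
\[
\ln Z(\vhA_{N}[\vec\theta])-\ln Z(\vhA_{N-1}[\vec\theta])=\underbrace{\big(\ln Z(\vhA_{N}[\vec\theta])-\ln Z(\vec B)\big)}_{\Delta_+}-\underbrace{\big(\ln Z(\vhA_{N-1}[\vec\theta])-\ln Z(\vec B)\big)}_{\Delta_-},
\]
both increments are now expressed through the \emph{single} measure $\mu_{\vec B}$. Conditioning on $\vec B$ and invoking \Lem~\ref{Lemma_0pinning} for the (now deterministic) matrix $\vec B$, the distribution $\mu_{\vec B}$ is $(\eps,2)$-symmetric with probability at least $1-\eps$; \Lem~\ref{lem:k-wise} then upgrades this to $(\delta,\ell)$-symmetry for any fixed $\ell$. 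Crucially, by \Lem~\ref{Prop_kernel} every one-coordinate marginal $\mu_{\vec B,i}$ is \emph{exactly} $\delta_0$ or the uniform distribution on $\FF_q$, so the empirical law of these marginals is itself an element $\pi_{\vec B}\in\cQ(\FF_q)$, and the marginal of a uniformly random old variable is a $\pi_{\vec B}$-sample.

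On the symmetry event the $O(1)$ equations added to $\vec B$ (in either direction) touch, with probability $1-o(1)$, pairwise disjoint tuples of old variables for which the joint kernel-marginal is within $o(1)$ in total variation of the product of its one-coordinate marginals, which are iid $\pi_{\vec B}$-samples. Since each added homogeneous equation has the form $\va\perp\sigma$ weighted by these marginals, a direct computation gives $\Erw[\Delta_-\mid\vec B]=d(1-1/k)\,\cB'_{d,k}(\pi_{\vec B})+o(1)$, while summing over the value $\chi\in\FF_q$ of the new variable $x_N$ yields $\Erw[\Delta_+\mid\vec B]=\cB''_{d,k}(\pi_{\vec B})+o(1)$; hence $\Erw[\Delta_+-\Delta_-\mid\vec B]=\cB_{d,k}(\pi_{\vec B})+o(1)\le\sup_{\pi\in\cQ(\FF_q)}\cB_{d,k}(\pi)+o(1)$. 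On the complementary event (probability $\le\eps$) one bounds $|\Delta_+-\Delta_-|$ crudely by $\ln q$ times one plus the two Poisson equation counts, whose second moments are $O(1)$, so Cauchy--Schwarz caps the bad contribution by $C(d,k,q)\sqrt\eps$. Thus every telescoping summand is at most $\sup_{\pi\in\cQ(\FF_q)}\cB_{d,k}(\pi)+C\sqrt\eps+o_N(1)$; first choosing $\delta$ (the factorisation budget) and then $\eps$ small enough — which fixes $T=T(d,k,q,\xi)$ via \Lem~\ref{Lemma_0pinning} — makes the $N$-independent error at most $\xi$, and the Cesàro average of the $o_N(1)$ terms vanishes, giving \eqref{eqProp_BFE_1}. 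The step I expect to be the main obstacle is this factorisation: one must argue that the \emph{random} number $\Po(d)$ of newly added equations almost surely touch pairwise disjoint, ``generic'' tuples of old variables on which $(\delta,\ell)$-symmetry applies, while simultaneously controlling the rare event that the Poisson count is atypically large; this is precisely where the $(\delta,\ell)$-symmetry supplied by \Cor~\ref{Cor_0pinning} and \Lem~\ref{lem:k-wise} must be combined with the rigid product structure of $\ker(\vec B)$ from \Lem~\ref{Prop_kernel} to force the relevant conditional probabilities into the clean form dictated by $\cB'_{d,k}$ and $\cB''_{d,k}$.
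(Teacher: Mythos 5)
Your proposal is correct and follows essentially the same route as the paper: a one-step Aizenman--Sims--Starr coupling of $\vhA_{N-1}[\vec\theta]$ and $\vhA_{N}[\vec\theta]$ through a common core (the paper's $\vA'$, your $\vec B$) with $\Po(d(1-1/k))$ and $\Po(d)$ added equations respectively, pinning plus \Lem~\ref{lem:k-wise} to factorise the joint marginals on the touched coordinates, \Lem~\ref{Prop_kernel} to place the empirical marginal law in $\cQ(\FF_q)$, and a crude Poisson-tail bound on the exceptional event. The only cosmetic difference is that the paper routes the symmetry statement through \Cor~\ref{Cor_0pinning} together with a total-variation comparison of the core with $\vA_{N-1}$ (Claim~\ref{Claim_coupling2}), rather than applying \Lem~\ref{Lemma_0pinning} directly to the conditioned core, but this changes nothing of substance.
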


We proceed to prove \Lem~\ref{Prop_BFE} via the coupling argument that was alluded to already.
To be precise, we are going to show that for large enough $T$ and $n$,
	\begin{align}\label{eqProp_BFE_2}
	\Erw\brk{\ln Z(\vhA_{n+1}[\vec \theta])}-\Erw\brk{\ln Z(\vhA_{n}[\vec \theta])}&\leq\sup_{\pi\in\cQ(\FF_q)}{\cB_{d,k}}(\pi)+\xi.
	\end{align}
Then (\ref{eqProp_BFE_1}) follows by telescoping.

To prove (\ref{eqProp_BFE_2}) we couple $\vhA_{n}[\vec \theta]$ and $\vhA_{n+1}[\vec \theta]$ as follows.
Let $\GAMMA'$ be a Poisson variable with mean
	\begin{equation}\label{eqgamma'}
	\gamma'=\frac{d(n+1)}k\cdot\bink{n}k\bink{n+1}k^{-1}=\frac{d(n+1)}k-d,
	\end{equation}
which is independent of $\vec \theta$, and let $\vA'=\vA_{n,\GAMMA'}$ be the random matrix with $n$ columns and $\GAMMA'$ rows.
Further, let $\GAMMA'',\GAMMA'''$ be Poisson variables, independent of $\GAMMA'$ and $\vec \theta$,  with means
	\begin{align}\label{eqgamma''}
	\gamma''&=\frac{dn}k-\gamma'=d\bc{1-1/k},&\gamma'''&=\frac{d(n+1)}k-\gamma'=d,
	\end{align}
respectively.
Now obtain $\vA''$ from $\vA'$ by adding $\GAMMA''$ rows, chosen mutually independently and independently of $\vA'$ from the same distribution as the rows of $\vA_n$.
Further, obtain $\vA'''$ from $\vA'$  by first adding a further column to $\vA'$ with all entries equal to $0$
and then adding $\GAMMA'''$ more rows, chosen mutually independently and independently of $\vA'$ from the same distribution as the rows of $\vA_{n+1}$ subject to the condition that the rightmost entry is non-zero.
Finally, obtain $\vA''''$ from $\vA'''$ by randomly permuting the rows.

\begin{claim}\label{Claim_coupling1}
The distribution of $\vA''$ is identical to that of $\vA_n$ and the distribution of $\vA''''$ is identical to that of $\vA_{n+1}$.
\end{claim}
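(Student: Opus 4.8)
The plan is to reduce both identities to the superposition and thinning properties of Poisson random variables together with the exchangeability of the i.i.d.\ rows of $\vA_n$ and $\vA_{n+1}$; nothing beyond the elementary identities $\gamma'+\gamma''=dn/k$ and $\gamma'''=d$ coming from \eqref{eqgamma'}--\eqref{eqgamma''} enters.

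For the first assertion, recall that conditionally on $\GAMMA'$ the rows of $\vA'=\vA_{n,\GAMMA'}$ are i.i.d.\ with precisely the per-row law of the rows of $\vA_n$: a uniformly random $k$-subset of $[n]$ as support, together with an independent value vector drawn from $P$. The $\GAMMA''$ additional rows appended to obtain $\vA''$ have the same per-row law and are independent of $\vA'$ and of $\GAMMA'$. Since $\GAMMA'\sim\Po(\gamma')$ and $\GAMMA''\sim\Po(\gamma'')$ are independent and $\gamma'+\gamma''=dn/k$, Poisson superposition shows that $\vA''$ has $\Po(dn/k)$ rows, and the concatenation of two independent i.i.d.\ sequences of these Poisson lengths is again an i.i.d.\ sequence of the combined length. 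Hence $\vA''$ has exactly the law of $\vA_n$.

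For the second assertion, I would classify each row of $\vA_{n+1}$ according to whether its support contains the last coordinate $n+1$. A single row avoids $n+1$ with probability $\binom nk\binom{n+1}k^{-1}=(n+1-k)/(n+1)$ and contains it with probability $k/(n+1)$. By Poisson thinning the number of rows avoiding $n+1$ is $\Po(\gamma')$, the number of rows containing $n+1$ is $\Po(d)=\Po(\gamma''')$, these two counts are independent, and---conditionally on the counts---the two sub-collections of rows are independent and, within each class, i.i.d.\ from the corresponding conditional row law. A row conditioned to avoid $n+1$, read over the first $n$ coordinates, is precisely a row of $\vA_n$ (with a $0$ automatically occupying column $n+1$), while a row conditioned to contain $n+1$ is precisely a row drawn as in the construction of $\vA'''$, i.e.\ from the $\vA_{n+1}$-row law conditioned on a non-zero rightmost entry. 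Consequently the ordered matrix obtained from $\vA_{n+1}$ by listing first the rows that avoid $n+1$ and then the rows that contain it, each block kept in its original relative order, has exactly the law of $\vA'''$. Finally, this matrix is a (data-dependent) rearrangement of $\vA_{n+1}$, so applying to it an independent uniformly random row permutation produces a uniformly random rearrangement of the rows of $\vA_{n+1}$; since those rows are i.i.d., hence exchangeable, such a rearrangement is again distributed as $\vA_{n+1}$. This gives $\vA''''\stackrel{\mathrm d}{=}\vA_{n+1}$.

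There is no analytic difficulty here; the one point that needs care is the bookkeeping of row order. Because $\vA_n$ and $\vA_{n+1}$ are defined as ordered lists of rows rather than as multisets, the thinning step only identifies $\vA'''$ with a particular (random) reordering of $\vA_{n+1}$, and the explicit random row permutation built into the definition of $\vA''''$ is exactly what restores the correct joint law via exchangeability. One should also use the full strength of Poisson thinning---that the two class counts are independent of one another and that the rows within each class remain i.i.d.\ conditionally on the counts---rather than merely its marginal consequence for a single class.
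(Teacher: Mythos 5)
Your proposal is correct and follows essentially the same route as the paper: Poisson superposition of $\GAMMA'$ and $\GAMMA''$ for the first identity, and Poisson thinning of the rows of $\vA_{n+1}$ according to whether the support contains coordinate $n+1$, followed by the random row permutation, for the second. The extra care you take with row ordering and exchangeability is a correct elaboration of a point the paper treats more briefly.
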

\begin{proof}
The total number of rows of $\vA''$ is $\GAMMA'+\GAMMA''\sim\Po(dn/k)$, and each is chosen independently from the distribution of the rows of $\vA_n$.
Hence, $\vA''$ and $\vA_n$ are identically distributed.
Further, since the positions of the $k$ non-zero entries in each row of $\vA_{n+1}$ are chosen uniformly without replacement, the number of rows whose rightmost entry equals zero has distribution $\Po(\gamma')$.
Similarly, the number of rows whose rightmost entry is non-zero has distribution $\Po(\gamma''')$, and both these numbers are independent by Poisson thinning.
Hence, $\vA'''$ has the same distribution as $\vA_{n+1}$, except that the rows with a non-zero entry in the rightmost column stand at the bottom.
This is remedied by the random permutation, and thus $\vA''''$ and $\vA_{n+1}$ are identically distributed.
\end{proof}

\begin{claim}\label{Claim_coupling2}
The distributions of $\vA'$, $\vA_n$ have total variation distance $o(1)$.
\end{claim}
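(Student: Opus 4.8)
The plan is to reduce the claim to a standard estimate for the total variation distance between two Poisson distributions with nearby, large means. The crucial observation is that $\vA'=\vA_{n,\GAMMA'}$ and $\vA_n$ are built in exactly the same way — an independent sequence of rows, each drawn from the common row distribution of $\vA_n$ — and differ only in the \emph{random} number of rows, which is $\Po(\gamma')$ for $\vA'$ and $\Po(dn/k)$ for $\vA_n$. By \eqref{eqgamma'} we have $dn/k-\gamma'=d(1-1/k)=O(1)$, while both means tend to infinity, and that is all that the argument uses.

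First I would make the reduction precise via a coupling. Let $(R_i)_{i\geq1}$ be independent rows drawn from the distribution of a single row of $\vA_n$, and let $(\vec M,\vec M')$ be an optimal (maximal) coupling of $\Po(dn/k)$ and $\Po(\gamma')$, chosen independently of $(R_i)_{i\geq1}$, so that $\pr[\vec M\neq\vec M']=\dTV(\Po(dn/k),\Po(\gamma'))$. Setting $\vA_n=(R_1,\ldots,R_{\vec M})$ and $\vA'=(R_1,\ldots,R_{\vec M'})$ yields the correct marginal laws, since conditioning on the number of rows leaves each matrix with independent rows from the common row distribution; moreover the two matrices coincide whenever $\vec M=\vec M'$. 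Hence $\dTV(\vA',\vA_n)\leq\dTV(\Po(dn/k),\Po(\gamma'))$.

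Second I would bound the right-hand side by an elementary Poisson estimate. Using the explicit Hellinger distance between Poisson laws, $H(\Po(\lambda),\Po(\mu))^2=1-\exp(-(\sqrt\lambda-\sqrt\mu)^2/2)$, together with the general inequality $\dTV\leq\sqrt2\,H$ and $1-e^{-t}\leq t$, one obtains $\dTV(\Po(\lambda),\Po(\mu))\leq|\sqrt\lambda-\sqrt\mu|$. Therefore
\begin{align*}
\dTV(\Po(dn/k),\Po(\gamma'))\leq\sqrt{dn/k}-\sqrt{\gamma'}=\frac{dn/k-\gamma'}{\sqrt{dn/k}+\sqrt{\gamma'}}=\frac{d(1-1/k)}{\sqrt{dn/k}+\sqrt{\gamma'}}=O(n^{-1/2}),
\end{align*}
which is $o(1)$. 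Combining the two steps proves the claim.

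There is no serious obstacle here; the only point requiring a little care is the first step, namely checking that conditioning on the number of rows really does leave $\vA_n$ and $\vA'=\vA_{n,\GAMMA'}$ with the \emph{same} conditional law (independent rows from the common row distribution), so that the two matrices may be taken literally equal on the event $\{\vec M=\vec M'\}$; everything else is the cited elementary bound on $\dTV$ between Poisson variables with means that differ by a constant.
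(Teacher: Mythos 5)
Your argument is correct and follows essentially the same route as the paper: reduce to the total variation distance between the two Poisson row counts (whose means differ by the constant $d(1-1/k)$ while both are of order $n$), and conclude that this distance is $o(1)$. The only difference is cosmetic — you make the elementary Poisson estimate explicit via the Hellinger bound $\dTV(\Po(\lambda),\Po(\mu))\leq|\sqrt\lambda-\sqrt\mu|$, whereas the paper simply cites the standard deviation versus mean-shift comparison.
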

\begin{proof}
The individual rows of $\vA'$, $\vA_n$ are identically distributed, it is just that the numbers $\GAMMA',\vm$ of rows are distributed differently.
But since $\GAMMA',\vm$ have standard deviation $\Omega(\sqrt n)$ and $\gamma'- dn/k=O(1)$, the random variables $\GAMMA',\vm$ have total variation distance $o(1)$.
\end{proof}

Let $\vA'[\vec \theta],\ldots,\vA''''[\vec\theta]$ be the matrices obtained from $\vA',\ldots,\vA''''$ by adding the matrix $I(\vec\theta)$ from \Cor~\ref{Cor_0pinning} at the bottom.
Moreover, let $\vec\pi$ be the empirical distribution of the marginals 
$\mu_{\vA'[\vec\theta],1},\ldots,\mu_{\vA'[\vec\theta],n}$ of $\mu_{\vA'[\vec\theta]}$; in symbols,
	\begin{align}\label{eqpi}
	\vec\pi&=\frac1n\sum_{i=1}^n\delta_{\mu_{\vA'[\vec\theta],i}}\enspace.
	\end{align}
\Lem~\ref{Prop_kernel} shows that $\vec\pi\in\cQ(\FF_q)$.
The proofs of the following two claims constitute the main step toward the proof of \Lem~\ref{Prop_BFE}.

\begin{claim}\label{Claim_coupling3} 
For any $\xi>0$ there is $T_0>0$ such that for every $T>T_0$ for all large enough $n$ we have
	\begin{align*}
	\Erw\abs{d(1-1/k)\cB_{d,k}^-(\vec\pi)-\Erw\brk{\ln\frac{Z(\vA''[\vec\theta])}{Z(\vA'[\vec\theta])}\bigg|\vA'[\vec\theta]}}&<\xi.
	\end{align*}
\end{claim}

\begin{claim}\label{Claim_coupling4} 
For any $\xi>0$ there is $T_0>0$ such that for every $T>T_0$ for all large enough $n$ we have
	\begin{align*}
	\Erw\abs{\cB_{d,k}^+(\vec\pi)-\Erw\brk{\ln\frac{Z(\vA'''[\vec\theta])}{Z(\vA'[\vec\theta])}\bigg|\vA'[\vec
          \theta]}}&<\xi.
	\end{align*}
\end{claim}

Before we carry out the mildly technical proofs let us take a moment to explain where the formulas come from.
Concerning Claim~\ref{Claim_coupling3}, we recall that $\vA''$ is obtained from $\vA'$ by appending $\GAMMA''$
new rows $\va_1,\ldots,\va_{\GAMMA''}$.
Hence, we can interpret $Z(\vA''[\vec\theta])/Z(\vA'[\vec\theta])$ as the probability that a random vector $\vx$ in the kernel of $\vA'[\vec\theta]$ is perpendicular
to the news rows $\va_1,\ldots,\va_{\GAMMA''}$.
In formulas,
	\begin{align*}
	\frac{Z(\vA''[\vec \theta])}{Z(\vA'[\vec\theta])}&=\sum_{x\in\FF_q^n}\frac{\vecone\{x\in\ker(\vA'[\vec\theta])\}}{Z(\vA'[\vec\theta])}
		\prod_{h=1}^{\GAMMA''}\vecone\{x\perp\va_h\}=\bck{\vecone\{\forall i\leq\GAMMA'':\vx\perp\va_i\}}_{\vA'[\vec\theta],0}.
	\end{align*}
To proceed we call upon our knowledge of the measure $\mu_{\vA'[\vec\theta]}$.
Since $\vA'$ and  {$\vA_n$ are} close in total variation distance by Claim~\ref{Claim_coupling2}, \Cor~\ref{Cor_0pinning} and \Lem~\ref{lem:k-wise} show that $\mu_{\vA'[\vec\theta]}$ is very likely $(\eps,\ell)$-symmetric.
Here we can pick $\eps>0$ as small and $\ell>0$ as large as we please by making $T$ sufficiently large.
Now, crucially, the mean of $\GAMMA''$ is bounded and the positions of the non-zero entries in the $\GAMMA''$ new rows are random.
Hence, if $\GAMMA''\leq\ell/k$ is not too big, then most likely  the joint distribution of all the entries of the vector $\vx$ where one of the new rows $\va_i$ has a non-zero entry is close to a product distribution in total variation distance, thanks to $(\eps,\ell)$-symmetry.
Consequently, writing $\cJ_h$ for the set of non-zero entries of $\va_h$, with probability at least $1-\eps$,
	\begin{align}\label{eqexplainBFE}
	\ln\frac{Z(\vA''[\vec\theta])}{Z(\vA'[\vec\theta])}&=
		\sum_{h=1}^{\GAMMA''}\ln\Bigg[\sum_{\sigma\in\FF_q^{\cJ_h}}\vecone\{\sigma\perp\va_h\}
		\prod_{j\in\cJ_h}\mu_{\vA'[\vec\theta],j}(\sigma_j)\Bigg]\pm\mbox{small error term}.
	\end{align}
Finally, because for each $h$ the positions $\cJ_h$ are chosen uniformly at random, the marginals $(\mu_{\vA'[\vec\theta],j})_{j\in\cJ_h}$
are distributed as independent samples from the distribution $\vec\pi$.
Thus, up to a small error term the expectation of each summand on the right hand side of (\ref{eqexplainBFE}) coincides with $\cB_{d,k}^-(\vec\pi)$.

The intuition behind Claim~\ref{Claim_coupling4} is similar, except that now we add a column as well as a few rows.
More specifically, the additional column corresponds to a new variable $x_{n+1}$ that only appears in the new rows $\va_1,\ldots,\va_{\GAMMA'''}$.
Hence, we can interpret $Z(\vA'''[\vec\theta])/Z(\vA'[\vec\theta])$ as the expected number of ways in which a random $\vx\in\ker(\vA'[\vec\theta])$ can be enhanced
by an additional coordinate $x_{n+1}$ so that $\bink{\vx}{x_{n+1}}\perp\va_i$ for all $i=1,\ldots,\GAMMA'''$.
In symbols,
	\begin{align}\nonumber
	\frac{Z(\vA'''[\vec\theta])}{Z(\vA'[\vec\theta])}&
		=\bck{\sum_{x_{n+1}\in\FF_q}\vecone\cbc{\forall h\in[\GAMMA''']:\vec a_h\perp\bink{\vx}{x_{n+1}}}}_{\vA'[\vec\theta],0}.
	\end{align}
Similarly as before, the joint distribution of entries of $\vx$ that occur in the positions $\cJ_h$ where $\vec a_h$ is non-zero factorises with probability as close to one as we wish, provided $T$ is chosen suitably large, and then
	\begin{align}\nonumber
	\ln\frac{Z(\vA'''[\vec\theta])}{Z(\vA'[\vec\theta])}&=
			\sum_{\sigma_{n+1}\in\FF_q}
			\prod_{h\in[\GAMMA''']}\sum_{\sigma\in\FF_q^{n}}\vecone\cbc{\va_h\perp\bink{\sigma}{\sigma_{n+1}}}\prod_{j\in\cJ_h}\mu_{\vA'[\vec\theta],j}(\sigma_j)\pm\mbox{small error term}.
	\end{align}
Taking the conditional expectation of the above, we obtain the $\cB_{d,k}^+(\vec\pi)$, up to a small error.
Clearly, the only technical difficulty is to handle the various error terms, so let us deal with them now and prove Claims~\ref{Claim_coupling3} and~\ref{Claim_coupling4}.

\begin{proof}[Proof of Claim~\ref{Claim_coupling3}]
The matrix $\vA''[\vec\theta]$ is obtained from $\vA'[\vec\theta]$ by adding $\GAMMA''$ rows, whence $\rk(\vA''[\vec\theta])\leq\rk(\vA'[\vec\theta])+\GAMMA''$.
Thus, recalling that $\ln Z(A)=\nul(A)\ln q$, the basic relation $\rk(\vA''[\vec\theta])+\nul(\vA''[\vec\theta])=n=\rk(\vA'[\vec\theta])+\nul(\vA'[\vec\theta])$ yields the worst-case estimate
	\begin{equation}\label{eqClaim_coupling3_1}
	\abs{\ln\frac{Z(\vA''[\vec\theta])}{Z(\vA'[\vec\theta])}}\leq\GAMMA''\ln q.
	\end{equation}

Choose a large enough number $L=L(d,k,\xi)>0$, a small enough $\eta=\eta(L)>0$, an even smaller $\eps=\eps(\eta)>0$
and a large enough $T=T(\eps)>0$.
Let $\cF$ be the event that $\vA'[\vec\theta]$ is $(\eps,2)$-symmetric.
Then by \Cor~\ref{Cor_0pinning} and Claim~\ref{Claim_coupling2},
	\begin{align}\label{eqClaim_coupling3_pin}
	\pr\brk{\cF}&\geq1-\eps.
	\end{align}
Further, since by (\ref{eqgamma''}) the mean of $\GAMMA''$ is bounded, because the Poisson distribution has sub-exponential tails and due to the bound (\ref{eqClaim_coupling3_1}) we can choose $L$ large enough so that
	\begin{align}\label{eqClaim_coupling3_2}
	\Erw\brk{\abs{\ln\frac{Z(\vA''[\vec\theta])}{Z(\vA'[\vec\theta])}}\vecone\{\GAMMA''>L\}\,\bigg|\,\vA'[\vec\theta]}\leq\Erw\brk{\GAMMA''\vecone\{\GAMMA''>L\}}\ln q
		<\xi/4.
	\end{align}

Let $\va_1,\ldots,\va_{\GAMMA''}\in\FF_q^n$ be the rows added to $\vA'$ to obtain $\vA''$.
Moreover, for each $1\le i\le \GAMMA''$, let $\cJ_i$ be the set of all indices $j\in[n]$ such that $\va_{ij}\neq0$ and let $\cJ=\bigcup_{i\leq\GAMMA''}\cJ_i$.
In other words, $\cJ$ contains the positions of the non-zero entries in the new rows.
Since these positions are chosen uniformly, assuming that $n$ is sufficiently large we have
	\begin{align}\label{eqClaim_coupling3_3}
	\pr\brk{|\cJ|=k\GAMMA''\,\big|\,\GAMMA''\leq L}>1-\eps.
	\end{align}
Hence, most likely no variable occurs twice in the new equations.
Further, on $\cF$, \Lem~\ref{lem:k-wise} shows that $\mu_{\vA'[\vec\theta]}$ is indeed $(\eta^3,kL)$-symmetric (provided $\eps$ is sufficiently small).
Therefore, since $\cJ\subset[n]$ is a uniformly random subset given its size, we conclude that
	\begin{align}\label{eqClaim_coupling3_4}
	\pr\brk{\TV{\mu_{\vA'[\vec\theta],\cJ}-\bigotimes_{j\in\cJ}\mu_{\vA'[\vec\theta],j}}<\eta\,\bigg|\,\GAMMA''\leq L,\cF}>1-\eta.
	\end{align}
Letting $\cE=\cbc{\GAMMA''\leq L,\,|\cJ|=k\GAMMA'',\,\tv{\mu_{\vA'[\vec\theta],\cJ}-\bigotimes_{j\in\cJ}\mu_{\vA'[\vec\theta],j}}<\eta}$, we obtain from (\ref{eqClaim_coupling3_3}) and (\ref{eqClaim_coupling3_4}) that
	\begin{align}\label{eqClaim_coupling3_5}
	\pr\brk{\cE\,|\,\GAMMA''\leq L,\cF}>1-2\eta.
	\end{align}
Moreover, on the event $\cE\cap\cF$ we have
	\begin{align}
	\frac{Z(\vA''[\vec\theta])}{Z(\vA'[\vec\theta])}&
		=\bck{\vecone\{\forall h\in[\GAMMA'']:\vec a_h\perp\vx\}}_{\vA'[\vec\theta],0}
		=\prod_{h\in[\GAMMA'']}\sum_{\sigma\in\FF_q^{\cJ_h}}\vecone\{\sigma\perp\va_h\}\left(\prod_{j\in\cJ_h}\mu_{\vA'[\vec\theta],j}(\sigma_j)+O(\eta)\right)\label{eqError1}\\
		&=O(\eta)+\prod_{h\in[\GAMMA'']}\sum_{\sigma\in\FF_q^{\cJ_h}}\vecone\{\sigma\perp\va_h\}\prod_{j\in\cJ_h}\mu_{\vA'[\vec\theta],j}(\sigma_j),\label{eqClaim_coupling3_6}
	\end{align}
	where the second equation in~(\ref{eqError1}) comes from $\tv{\mu_{\vA'[\vec\theta],\cJ}-\bigotimes_{j\in\cJ}\mu_{\vA'[\vec\theta],j}}<\eta$ on $\cE$, and~(\ref{eqClaim_coupling3_6}) follows by noting that $\prod_{h\in[\GAMMA'']}\sum_{\sigma\in\FF_q^{\cJ_h}}\vecone\{\sigma\perp\va_h\}=O(1)$ on $\cE$.
We also obtain
	\begin{align}\label{eqClaim_coupling3_6a}
	q^{-k\GAMMA''}\leq\prod_{h\in[\GAMMA'']}\sum_{\sigma\in\FF_q^{\cJ_h}}\vecone\{\sigma\perp\va_h\}\prod_{j\in\cJ_h}\mu_{\vA'[\vec\theta],j}(\sigma_j)\leq1.
	\end{align}
	The upper bound 1 above is trivial, as the quantity $\prod_{h\in[\GAMMA'']}\sum_{\sigma\in\FF_q^{\cJ_h}}\vecone\{\sigma\perp\va_h\}\prod_{j\in\cJ_h}\mu_{\vA'[\vec\theta],j}(\sigma_j)$ is simply the probability that $\vec{\sigma}\perp \va_h$ simultaneously for all $h\in[\GAMMA'']$ where $\vec{\sigma}$ is a random variable with distribution given by $\bigotimes_{j\in\cJ}\mu_{\vA'[\vec\theta],j}$. For the lower bound, note that $\sigma\perp \va_h$ simultaneously for all $h\in[\GAMMA'']$ if $\sigma$ is the zero vector. The probability that $\vec{\sigma}=\vec{0}$ is at least $q^{-k\GAMMA'}$, as $\mu_{\vA'[\vec\theta],j}(0)\geq1/q$ for every $j$ by \Lem~\ref{Prop_kernel}. This yields the desired lower bound.
Therefore, choosing $\eta=\eta(L)>0$ sufficiently small and taking logs, we obtain from (\ref{eqClaim_coupling3_6}) that
	\begin{align}\label{eqClaim_coupling3_7}
	\ln\frac{Z(\vA''[\vec\theta])}{Z(\vA'[\vec\theta])}&
		\leq\xi/16+
			\sum_{h\in[\GAMMA'']}\ln\sum_{\sigma\in\FF_q^{n}}\vecone\{\sigma\perp\va_h\}\prod_{j\in\cJ_h}\mu_{\vA'[\vec\theta],j}(\sigma_j).
	\end{align}
Hence, on $\cF$, we obtain
	\begin{align}\nonumber
	\Erw\brk{\vecone\cbc\cE\cdot\ln\frac{Z(\vA''[\vec\theta])}{Z(\vA'[\vec\theta])}\bigg|\vA'[\vec
          \theta]}&\leq
		\Erw\brk{\vecone\cbc\cE\cdot\sum_{h\in[\GAMMA'']}\ln\sum_{\sigma\in\FF_q^{n}}\vecone\{\sigma\perp\va_h\}\prod_{j\in\cJ_h}\mu_{\vA'[\vec\theta],j}(\sigma_j)\bigg|\vA'[\vec \theta]}+\xi/16&[\mbox{due to (\ref{eqClaim_coupling3_7})}]\\
		&\leq\Erw\brk{\vecone\{\GAMMA''\leq L\}\cdot\sum_{h\in[\GAMMA'']}\ln\sum_{\sigma\in\FF_q^{n}}\vecone\{\sigma\perp\va_h\}\prod_{j\in\cJ_h}\mu_{\vA'[\vec\theta],j}(\sigma_j)\bigg|\vA'[\vec\theta]}+\xi/8&[\mbox{due to (\ref{eqClaim_coupling3_5}),
			\eqref{eqClaim_coupling3_6a}}].\nonumber
	\end{align}
The positions $\cJ_h$ are chosen uniformly, the distribution $P$ of the non-zero entries of the vectors $\va_h$ is permutation-invariant and
$\vec\pi$ is the empirical distribution of the marginals
$(\mu_{\vA'[\vec\theta],j})_{j\in[n]}$ by (\ref{eqpi}).
Therefore, continuing the last estimate we obtain on $\cF$,
	\begin{align}
	\Erw\brk{\vecone\cbc\cE\cdot\ln\frac{Z(\vA''[\vec\theta])}{Z(\vA'[\vec\theta])}\bigg|\vA'[\vec\theta]}&\leq
		\Erw\brk{\GAMMA''\vecone\{\GAMMA''\leq L\}}\cB_{d,k}^-(\vec\pi)+\xi/8
			\leq d(1-1/k)\cB_{d,k}^-(\vec\pi)+\xi/4.
		\label{eqClaim_coupling3_8}\end{align}
{The corresponding lower bound 
	\begin{align}\label{eqClaim_coupling3_9}
	\Erw\brk{\vecone\cbc\cE\cdot\ln\frac{Z(\vA''[\vec\theta])}{Z(\vA'[\vec\theta])}\bigg|\vA'[\vec\theta]}&
		\geq d(1-1/k)\cB_{d,k}^-(\vec\pi)-\xi/4&\mbox{on $\cF$}
	\end{align}
follows from the very same steps.}
Combining (\ref{eqClaim_coupling3_8}) and (\ref{eqClaim_coupling3_9}) we obtain
	\begin{align}\label{eqClaim_coupling3_9}
	\Erw\brk{\abs{\Erw\brk{\vecone\cbc\cE\cdot\ln\frac{Z(\vA''[\theta])}{Z(\vA'[\theta])}\bigg|\vA'[\vec
          \theta]}-d(1-1/k)\cB_{d,k}^-(\vec\pi)}}&\leq\xi/4
		&\mbox{on $\cF$}.
	\end{align}
Finally, combining (\ref{eqClaim_coupling3_1}), (\ref{eqClaim_coupling3_pin}), (\ref{eqClaim_coupling3_5}) and (\ref{eqClaim_coupling3_9}), we see that
	\begin{align*}
	\Erw&\abs{d(1-1/k)\cB_{d,k}^-(\vec\pi)-\Erw\brk{\ln\frac{Z(\vA''[\vec\theta])}{Z(\vA'[\vec\theta])}\bigg|\vA'[\vec\theta]}}\\
		&\qquad\leq
		\Erw\abs{\vecone\{\GAMMA''>L\}\ln\frac{Z(\vA''[\vec\theta])}{Z(\vA'[\vec\theta])}}+
			\Erw\brk{\vecone\{\cF\}\abs{\Erw\brk{\vecone\{\cE\}\ln\frac{Z(\vA''[\theta])}{Z(\vA'[\theta])}\bigg|\vA'[\vec\theta]}-d(1-1/k)\cB_{d,k}^-(\vec\pi)}}\\
		&\qquad\qquad	+\bc{\pr\brk{\cF^c}}+1-\pr\brk{\cE\,|\,\GAMMA''\leq L,\cF}L\ln q<\xi,
	\end{align*}
as desired.
\end{proof}

\begin{proof}[Proof of Claim~\ref{Claim_coupling4}]
As in the proof of Claim~\ref{Claim_coupling3} we begin with a crude bound on $|\ln(Z(\vA'''[\vec\theta])/Z(\vA'[\vec\theta]))|$.
Since $\vA'''[\vec\theta]$ is obtained from $\vA'[\vec\theta]$ by adding a zero column and $\GAMMA'''$ rows,
we have $\rk(\vA'''[\vec\theta])\leq\rk(\vA'[\vec\theta])+\GAMMA'''+1$.
Hence, 
	\begin{equation}\label{eqClaim_coupling4_1}
	\abs{\ln\frac{Z(\vA'''[\vec\theta])}{Z(\vA'[\vec\theta])}}\leq(1+\GAMMA''')\ln q.
	\end{equation}
Once more we choose a large $L=L(d,k,\xi)>0$, a small $\eta=\eta(L)>0$, a smaller $\eps=\eps(\eta)>0$ and a sufficiently large $T=T(\eps)>0$.
By (\ref{eqgamma''}) the mean of $\GAMMA'''$ is bounded.
Hence, due to (\ref{eqClaim_coupling4_1}) we can pick $L$ so large that
	\begin{align}\label{eqClaim_coupling4_2}
	\Erw\brk{\abs{\ln\frac{Z(\vA'''[\vec\theta])}{Z(\vA'[\vec\theta])}}\vecone\{\GAMMA'''>L\}}\leq\Erw\brk{(1+\GAMMA''')\vecone\{\GAMMA'''>L\}}\ln q
		<\xi/4.
	\end{align}
Furthermore, by \Cor~\ref{Cor_0pinning} and Claim~\ref{Claim_coupling2} the event $\cF$ that $\vA'[\vec\theta]$ is $(\eps,2)$-symmetric satisfies
	\begin{align}\label{eqClaim_coupling4_pin}
	\pr\brk{\cF}&\geq1-\eps.
	\end{align}

With {$\va_1,\ldots,\va_{\GAMMA'''}\in\FF_q^{n+1}$} the rows added to obtain $\vA'''$,
we let $\cJ_i$ be the set of all $j\in[n]$ with $\va_{ij}\neq0$.
Hence, $\cJ=\bigcup_{i\leq\GAMMA'''}\cJ_i$ contains the `old' variable indices that occur in the new equations.
For large enough $n$ we have
	\begin{align*}
	\pr\brk{|\cJ|=(k-1)\GAMMA'''\,\big|\,\GAMMA'''\leq L}>1-\eps.
	\end{align*}
Further, \Lem~\ref{lem:k-wise} shows that
	$\mu_{\vA'[\vec\theta]}\mbox{ is $(\eta^3,(k-1)L)$-symmetric}$ on $\cF$, provided $\eps,T$ were chosen suitably.
Since the $\cJ_i$ are random subsets of size $k-1$ we thus obtain
	\begin{align*}
	\pr\brk{\TV{\mu_{\vA'[\vec\theta],\cJ}-\bigotimes_{j\in\cJ}\mu_{\vA'[\vec\theta],j}}<\eta\,\bigg|\,\GAMMA'''\leq L,\cF}>1-\eta.
	\end{align*}
Consequently, the event
	$\cE=\cbc{\GAMMA'''\leq L,\,|\cJ|=(k-1)\GAMMA''',\,\tv{\mu_{\vA'[\vec\theta],\cJ}-\bigotimes_{j\in\cJ}\mu_{\vA'[\vec\theta],j}}<\eta}$
satisfies
	\begin{align}\label{eqClaim_coupling4_5}
	\pr\brk{\cE\,|\,\GAMMA'''\leq L,\cF}>1-2\eta.
	\end{align}

We can easily approximate the quotient $Z(\vA'''[\vec\theta])/Z(\vA'[\vec\theta])$ on the event $\cE\cap\cF$.
Indeed, keeping in mind that going from $\vA'[\vec\theta]$ to $\vA'''[\vec\theta]$ we picked up an additional variable $x_{n+1}$ that appears in the $\GAMMA'''$ new equations  only,
	\begin{align}\nonumber
	\frac{Z(\vA'''[\vec\theta])}{Z(\vA'[\vec\theta])}&
		=\bck{\sum_{x_{n+1}\in\FF_q}\vecone\cbc{\forall h\in[\GAMMA''']:\vec a_h\perp\bink{\vx}{x_{n+1}}}}_{\vA'[\vec\theta],0}\\
		&=O(\eta)+
			\sum_{\sigma_{n+1}\in\FF_q}
			\prod_{h\in[\GAMMA''']}\sum_{\sigma\in\FF_q^{n}}\vecone\cbc{\va_h\perp\bink{\sigma}{\sigma_{n+1}}}\prod_{j\in\cJ_h}\mu_{\vA'[\vec\theta],j}(\sigma_j).
			\label{eqClaim_coupling4_6}
	\end{align}
Further, since \Lem~\ref{Prop_kernel} shows that $\mu_{\vA'[\vec\theta],j}(0)\geq1/q$ for all $j$, we have the bounds
	\begin{align}\label{eqClaim_coupling4_6a}
	q^{-(k-1)L}\leq\sum_{\sigma_{n+1}\in\FF_q}
			\prod_{h\in[\GAMMA''']}\sum_{\sigma\in\FF_q^{n}}\vecone\cbc{\va_h\perp\bink{\sigma}{\sigma_{n+1}}}\prod_{j\in\cJ_h}\mu_{\vA'[\vec\theta],j}(\sigma_j)&\leq q.
	\end{align}
	The derivation of~(\ref{eqClaim_coupling4_6}) and~(\ref{eqClaim_coupling4_6a}) is similar to that of~(\ref{eqClaim_coupling3_6}) and~(\ref{eqClaim_coupling3_6a}).
Therefore, taking logarithms in (\ref{eqClaim_coupling4_6}) and assuming $\eta>0$ is chosen small enough, we obtain
	\begin{align}\label{eqClaim_coupling4_7}
	\ln\frac{Z(\vA'''[\vec\theta])}{Z(\vA'[\vec\theta])}&
		\leq\xi/8+
		\ln\sum_{\sigma_{n+1}\in\FF_q}
			\prod_{h\in[\GAMMA''']}\sum_{\sigma\in\FF_q^{n}}\vecone\cbc{\va_h\perp\bink{\sigma}{\sigma_{n+1}}}\prod_{j\in\cJ_h}\mu_{\vA'[\vec\theta],j}(\sigma_j).
	\end{align}
Combining (\ref{eqClaim_coupling4_5}), (\ref{eqClaim_coupling4_6a})
and (\ref{eqClaim_coupling4_7}) and recalling the definition
(\ref{eqpi}) of $\vec\pi$, we obtain that on $\cF$,
	\begin{align}\label{eqClaim_coupling4_8}
	\Erw\brk{\vecone\cbc\cE\cdot\ln\frac{Z(\vA'''[\vec\theta])}{Z(\vA'[\vec\theta])}\bigg|\vA'[\vec\theta]}&
		\leq\cB_{d,k}^+(\vec\pi)+\xi/4,\qquad\mbox{and analogously}\\
	\Erw\brk{\vecone\cbc\cE\cdot\ln\frac{Z(\vA'''[\vec\theta])}{Z(\vA'[\vec\theta])}\bigg|\vA'[\vec\theta]}&
		\geq\cB_{d,k}^+(\vec\pi)-\xi/4.
			\label{eqClaim_coupling4_9}
	\end{align}
Finally, the assertion follows from (\ref{eqClaim_coupling4_1}), (\ref{eqClaim_coupling4_pin}), (\ref{eqClaim_coupling4_5}), (\ref{eqClaim_coupling4_8}) and (\ref{eqClaim_coupling4_9}).
\end{proof}

\begin{proof}[Proof of \Lem~\ref{Prop_BFE}]
Since $\vA''''$ is obtained from $\vA'''$ merely by permuting the rows, Claim~\ref{Claim_coupling1} shows that
	\begin{equation}\label{eqProp_BFE_11}
	\Erw\brk{\ln Z(\vhA_{n+1}[\vec\theta])}-\Erw\brk{\ln Z(\vhA_{n}[\vec\theta])}=\Erw\brk{\ln\frac{Z(\vA'''[\vec\theta])}{Z(\vA''[\vec\theta])}}.
	\end{equation}
Further, Claims~\ref{Claim_coupling3} and~\ref{Claim_coupling4} and the definition (\ref{eqmyBFE}) of $\cB_{d,k}$ show that for large enough $T$ and $n$,
	\begin{align}\label{eqProp_BFE_12}
	\Erw\brk{\ln\frac{Z(\vA'''[\vec\theta])}{Z(\vA''[\vec\theta])}}&\leq\xi+\sup_{\pi\in\cQ(\FF_q)}{\cB_{d,k}(\pi)},
	\end{align}
cf.\ (\ref{eqProp_BFE_2}).
Plugging (\ref{eqProp_BFE_12}) into (\ref{eqProp_BFE_11})  and averaging on $n$ completes the proof.
\end{proof}

\subsection{The Bethe free entropy}
While the Bethe free entropy emerges naturally from the coupling argument, \Prop~\ref{Prop_freeEnergy}
claims that the nullity of $\vA$ is upper-bounded by the simpler expressions $\sup_{\alpha\in[0,1]}\phi(\alpha)$.
The following lemma shows that the two formulas are equivalent.

\begin{lemma}\label{Lemma_Bethe}
For any $\displaystyle\pi=\alpha \delta_{\delta_0}+(1-\alpha){\delta_{\frac1q\sum_{\sigma\in\FF_q}\delta_\sigma}}\in\cQ(\FF_q)$ we have
	${\cB_{d,k}(\pi)}=\phi(\alpha)\ln q.$
\end{lemma}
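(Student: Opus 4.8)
The plan is to evaluate $\cB'_{d,k}(\pi)$ and $\cB''_{d,k}(\pi)$ separately on the atom $\pi=\alpha\delta_{\delta_0}+(1-\alpha)\delta_u$, where $u=\frac1q\sum_{\sigma\in\FF_q}\delta_\sigma$ is the uniform law, and then combine them via $\cB_{d,k}=\cB''_{d,k}-d(1-1/k)\cB'_{d,k}$. The one structural fact used repeatedly is that each sample $\vec\nu_{\pi,ij}$ is, independently, either $\delta_0$ (probability $\alpha$) or $u$ (probability $1-\alpha$): a coordinate drawing $\delta_0$ is pinned to $0$, a coordinate drawing $u$ is uniform and independent. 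Since every non-zero entry $\va_{ij}$ lies in $\FF_q^*$, each constraint $\va_i\perp\sigma$ is non-degenerate in the free (uniform) coordinates, so summing the weights over the solution set of one such equation gives simply $q^{\#\{\text{free coordinates}\}-1}$ times the product of the per-coordinate masses.

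For $\cB'_{d,k}$ I would fix $\va_1$ and the realizations of $\vec\nu_{\pi,11},\dots,\vec\nu_{\pi,1k}$ and let $F\subseteq[k]$ be the set of coordinates that drew $u$. If $F=\emptyset$ all coordinates are $0$, the constraint $\va_1\perp\vzero$ holds trivially, and the inner sum is $1$; if $F\neq\emptyset$ the constraint $\sum_{j\in F}\va_{1j}\sigma_j=0$ is a single non-trivial linear equation with $q^{|F|-1}$ solutions, and the inner sum is $q^{-1}$. As $\Pr[F=\emptyset]=\alpha^k$ and the value does not depend on $\va_1$, this gives $\cB'_{d,k}(\pi)=-(1-\alpha^k)\ln q$.

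For $\cB''_{d,k}$ I would condition on $\GAMMA\sim\Po(d)$ and on all the $\vec\nu_{\pi,ij}$ with $j\le k-1$, and for each row $i$ let $F_i\subseteq[k-1]$ be its set of coordinates that drew $u$. If $F_i\neq\emptyset$ the inner sum over $\sigma\in\FF_q^k$ with $\sigma_k=\chi$ equals $q^{-1}$ for every $\chi$ (the equation $\sum_{j\in F_i}\va_{ij}\sigma_j=-\va_{ik}\chi$ has $q^{|F_i|-1}$ solutions for each right-hand side); if $F_i=\emptyset$ the constraint reduces to $\va_{ik}\chi=0$, so the inner sum is $\vecone\{\chi=0\}$. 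Writing $\vec k_0$ for the number of rows with $F_i=\emptyset$, the sum over $\chi\in\FF_q$ of the product over $i$ is $q^{-(\GAMMA-\vec k_0)}$ when $\vec k_0\ge1$ (only $\chi=0$ survives) and $q^{1-\GAMMA}$ when $\vec k_0=0$ (every factor is $q^{-1}$ and all $q$ values of $\chi$ contribute); in both cases the logarithm equals $(\vecone\{\vec k_0=0\}+\vec k_0-\GAMMA)\ln q$. By Poisson thinning $\vec k_0\sim\Po(d\alpha^{k-1})$, so taking expectations yields $\cB''_{d,k}(\pi)=(\exp(-d\alpha^{k-1})+d\alpha^{k-1}-d)\ln q$. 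Substituting the two formulas into the definition of $\cB_{d,k}$ and using $-d+d(k-1)/k=-d/k$ produces exactly $\phi(\alpha)\ln q$.

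The argument is a routine computation, so there is no genuine obstacle; the only point needing care is the bookkeeping in $\cB''_{d,k}$ for rows whose first $k-1$ coordinates are all pinned to $0$. Such a row contributes no $\nu$-weight but still forces the extra variable $\chi$ to be $0$, and it is precisely the probability $\exp(-d\alpha^{k-1})$ that all $\GAMMA$ rows avoid this that supplies the $\exp(-d\alpha^{k-1})$ term of $\phi$. I would also check the degenerate cases $\GAMMA=0$ and $F_i=[k-1]$ to confirm that the solution count $q^{|F_i|-1}$ and the empty-product conventions are applied consistently throughout.
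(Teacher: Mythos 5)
Your proposal is correct and follows essentially the same route as the paper: evaluate $\cB'_{d,k}$ by conditioning on which of the $k$ coordinates draw $\delta_0$, and evaluate $\cB''_{d,k}$ by distinguishing whether some row has all of its first $k-1$ coordinates pinned (your $\vec k_0\geq1$ is the paper's event $\cF\neq\emptyset$). The only cosmetic difference is that you invoke Poisson thinning to get $\vec k_0\sim\Po(d\alpha^{k-1})$ directly, where the paper writes out the binomial sum over $|\cF|$ before taking the Poisson expectation.
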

\begin{proof}
Let us begin with the term 
	$$\cB_{d,k}^-(\pi)=\Erw\brk{\ln{\sum_{\sigma_1,\ldots,\sigma_k\in\FF_q}
			\vecone\cbc{\sum_{j=1}^k\vec a_{1j}\sigma_j=0}\prod_{j=1}^k\vec\nu_{\pi,1j}(\sigma_j)}}.$$
The distributions $(\vec\nu_{\pi,1j})_{j=1,\ldots,k}$ are chosen from $\pi$ independently.
Hence, with probability $\alpha^k$ we have $\vec\nu_{\pi,1j}=\delta_0$ for all $j=1,\ldots,k$, and in this case 
	$$\sum_{\sigma_1,\ldots,\sigma_k\in\FF_q}
			\vecone\cbc{\sum_{j=1}^k\vec a_{1j}\sigma_j=0}\prod_{j=1}^k\vec\nu_{\pi,1j}(\sigma_j)=1$$
for any realisation of the vector {$\va_1\in  \FF_q^{*\,k}$}.
Thus, the contribution to $\cB_{d,k}^-(\pi)$ is zero.
Now assume that $\vec\nu_{\pi,1j}$ is the uniform distribution on $\FF_q$ for at least one $j$, say $j=1$.
Then given any vector $\va_1$ and any $\sigma_2,\ldots,\sigma_k\in\FF_q$ there is precisely one value $\sigma_1\in\FF_q$ for which the equation is satisfied, and 
$\vec\nu_{\pi,11}(\sigma_1)=1/q$.
Hence, 
	\begin{equation}\label{eqProp_freeEnergy_2}
	\cB_{d,k}^-(\pi)=-(1-\alpha^k)\ln q.
	\end{equation}

Moving on to
	\begin{align*}
	\cB_{d,k}(\pi)^+&=
	\Erw\brk{\ln{\sum_{\sigma_{k}\in\FF_q}\prod_{i=1}^\GAMMA\sum_{\sigma_{1},\ldots,\sigma_{k-1}\in\FF_q}
		\vecone\cbc{\sum_{j=1}^k\vec a_{ij}\sigma_j=0}\prod_{j=1}^{k-1}\vec\nu_{\pi,ij}(\sigma_j)}},
	\end{align*}
there are again two possible scenarios.
Indeed, let $\cF$ be the set of all $i\in[\GAMMA]$ such that $\NU_{\pi,ij}=\delta_0$ for all $j=1,\ldots,k-1$.
Then on the event $\{\cF=\emptyset\}$ for every $i\leq\GAMMA$ there exists $j(i)\in\{1,\ldots,k-1\}$ such that $\NU_{\pi,ij(i)}$ is the uniform distribution on $\FF_q$.
Consequently, for every given $\sigma_{k}\in\FF_q$ and for every $i$ we have
	$$\sum_{\sigma_{1},\ldots,\sigma_{k-1}\in\FF_q}
		\vecone\cbc{\sum_{j=1}^k\vec a_{ij}\sigma_j=0}\prod_{j=1}^{k-1}\vec\nu_{\pi,ij}(\sigma_j)=1/q;$$
indeed, given {$\va_i\in  \FF_q^{*\,k}$} and the values $\sigma_h$ for $h\neq j(i)$ there is precisely one `correct' $\sigma_{j(i)}$ with
$\sum_{j=1}^k\vec a_{ij}\sigma_j=0$, and this value appears with probability $1/q$.
Further, because the {$\vec \nu_{\pi,ij}$} are chosen independently, we have 
	$\pr[\cF=\emptyset|\GAMMA]=(1-\alpha^{k-1})^{\GAMMA}.$
Thus,
	\begin{align}\label{eqProp_freeEnergy_4}
	\Erw\brk{\vecone\{\cF=\emptyset\}\ln{\sum_{\sigma_{k}\in\FF_q}\prod_{i=1}^\GAMMA\sum_{\sigma_{1},\ldots,\sigma_{k-1}\in\FF_q}
		\vecone\cbc{\sum_{j=1}^k\vec a_{ij}\sigma_j=0}\prod_{j=1}^{k-1}\vec\nu_{\pi,ij}(\sigma_j)}}&=
			\Erw\brk{(1-\alpha^{k-1})^{\GAMMA}(1-\GAMMA)\ln q}.
	\end{align}
By contrast, on the event $\{\cF\neq\emptyset\}$ there is at least one $i\leq\GAMMA$ such that $\NU_{\pi,ij}=\delta_0$ for all $j$.
Hence, the only value of $\sigma_{k}$ with
$\sum_{\sigma_{1},\ldots,\sigma_{k-1}}
		\vecone\{\sum_{j=1}^k\vec a_{ij}\sigma_j=0\}\prod_{j}\vec\nu_{\pi,ij}(\sigma_j)>0$ is $\sigma_{k}=0$.
Indeed, the sum is equal to one if $\sigma_{k}=0$.
Moreover, if $i\not\in\cF$ then there is $1\le j\le k-1$ such that $\vec\nu_{\pi,ij}$ is the uniform distribution on $\FF_q$.
Since given the vector $\va_i$ and any $(\sigma_h)_{h\neq j}$ there is precisely one $\sigma_j\in\FF_q$ with $\sum_{h\in[k]}\va_{ih}\sigma_h=0$,
we find
$\sum_{\sigma_{1},\ldots,\sigma_{k-1}}
		\vecone\{\sum_{j=1}^k\vec a_{ij}\sigma_j=0\}\prod_{j}\vec\nu_{\pi,ij}(\sigma_j)=1/q$.
In summary, because in the case $\cF\neq\emptyset$ only the summand $\sigma_{k}=0$ contributes, we obtain
	\begin{align}\label{eqProp_freeEnergy_5}
	\Erw\brk{\vecone\{\cF\neq\emptyset\}\ln{\sum_{\sigma_{k}\in\FF_q}\prod_{i=1}^\GAMMA\sum_{\sigma_{1},\ldots,\sigma_{k-1}\in\FF_q}
		\vecone\cbc{\sum_{j=1}^k\vec a_{ij}\sigma_j=0}\prod_{j=1}^{k-1}\vec\nu_{\pi,ij}(\sigma_j)}}&=
			\Erw\brk{\vecone\{\cF\neq\emptyset\}(|\cF|-\GAMMA)\ln q}.
	\end{align}
Finally, due to the independence of the $\NU_{\pi,i,j}$ each $i\in[\GAMMA]$ belongs to $\cF$ with probability $\alpha^{k-1}$ independently.
Thus,
	\begin{align}\label{eqProp_freeEnergy_3}
	\Erw\brk{\vecone\{\cF\neq\emptyset\}(|\cF|-\GAMMA)}&=
		\Erw\brk{\sum_{f=1}^{\GAMMA}\bink{\GAMMA}{f}\alpha^{(k-1)f}
			(1-\alpha^{k-1})^{\GAMMA-f}(f-\GAMMA)}.
	\end{align}
Combining (\ref{eqProp_freeEnergy_4})--(\ref{eqProp_freeEnergy_3}),  we obtain
	\begin{align}\label{eqProp_freeEnergy_7}
	\frac{\cB_{d,k}(\pi)^+}{\ln q}&=\Erw\brk{(1-\alpha^{k-1})^{\GAMMA}
		+\sum_{f=0}^{\GAMMA}\bink{\GAMMA}{f}\alpha^{(k-1)f}
			(1-\alpha^{k-1})^{\GAMMA-f}(f-\GAMMA)}=\exp\bc{-d \alpha^{k-1}}+d(\alpha^{k-1}-1).
	\end{align}
Finally, the assertion follows from (\ref{eqProp_freeEnergy_2}) and (\ref{eqProp_freeEnergy_7}).
\end{proof}

\begin{proof}[Proof of \Prop~\ref{Prop_freeEnergy}]
Combining \Lem s~\ref{Prop_BFE} and~\ref{Lemma_Bethe} and using (\ref{eqZnullity}), we see that for any $\xi>0$ there is $T>0$ such that for $\vec \theta \in [T]$ chosen uniformly and independently of $\vA$,
	\begin{align}\label{eqProp_freeEnergy__1}
	\limsup_{n\to\infty}\frac1n\Erw\brk{\nul(\vhA[\vec \theta])}\leq\sup_{\alpha\in[0,1]}\phi(\alpha)+\xi.
	\end{align}
But $\vhA[\vec\theta]$ is obtained from $\vA$ by adding $\vec\theta -1$ rows.
Hence, $\rk(\vhA[\vec\theta])\leq\rk(\vA)+\vec\theta-1$ and thus $\nul(\vA)\leq\nul(\vhA[\vec \theta])+T-1$.
Therefore, (\ref{eqProp_freeEnergy__1}) implies that
	\begin{align}\label{eqProp_freeEnergy__2}
	\limsup_{n\to\infty}\frac1n\Erw\brk{\nul(\vA)}\leq
	\limsup_{n\to\infty}\frac1n\Erw\brk{\nul(\vhA[\vec\theta])}\leq\sup_{\alpha\in[0,1]}\phi(\alpha)+\xi.
	\end{align}
Taking $\xi\to0$ in (\ref{eqProp_freeEnergy__2}) completes the proof.
\end{proof}

\section{Proof of \Prop~\ref{Prop_wrap}}\label{Sec_wrap}

\noindent
We will derive \Prop~\ref{Prop_wrap} from \Prop~\ref{Prop_freeEnergy}. First we show that the upper bound provided by \Prop~\ref{Prop_freeEnergy} can hold only if the number of variables frozen to $0$ is $o(n)$ \whp\
Recalling the notation $\vA[\vec\theta]$ from (\ref{eqAtheta}), we establish the following.

\begin{lemma}\label{Lemma_unfrozen} 
If there exist $0<d<d_k$, $T>0$ such that $\limsup_{n\to\infty}\frac1n\Erw|S_0(\vA[\vec \theta])|>0$,
then there is $D<d_k$ such that 
	$$\limsup_{n\to\infty}\frac1n\Erw[\nul(\vA_{n,\vm(D,n)})]>1-D/k.$$
\end{lemma}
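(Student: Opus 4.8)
The plan is to prove \Lem~\ref{Lemma_unfrozen} by an interpolation in the density parameter, tracking how fast the nullity decays as random equations are added, and showing that a macroscopic set $S_0$ forces a positive fraction of the newly added equations to be linearly dependent, which slows the decay below the rate $1/k$ built into the bound $1-d/k$. First I would reduce to $\vA[\vec\theta]$: since $\vA[\vec\theta]=\binom{\vA}{I(\vec\theta)}$ is obtained from $\vA$ by appending $\vec\theta-1\le T-1=O(1)$ unit rows, we have $\nul(\vA)\ge\nul(\vA[\vec\theta])\ge\nul(\vA)-(T-1)$, so any asymptotic lower bound on $\tfrac1n\Erw[\nul(\vA_{n,\vm(D,n)}[\vec\theta])]$ transfers verbatim to $\tfrac1n\Erw[\nul(\vA_{n,\vm(D,n)})]$. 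Hence it suffices to work with $\vA[\vec\theta]$ throughout, which is convenient because the hypothesis gives direct control of $|S_0(\vA[\vec\theta])|$; set $3c=\limsup_{n\to\infty}\tfrac1n\Erw|S_0(\vA_{n,\vm(d,n)}[\vec\theta])|>0$.

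Next I would assemble $\vA_{n,\vm(s,n)}[\vec\theta]$ by adding random $k$-equations one at a time to the $\vA$-part, keeping the $\vec\theta-1$ unit rows fixed. The standard Poisson differentiation identity gives, for each fixed $n$, that $h_n(s):=\tfrac1n\Erw[\nul(\vA_{n,\vm(s,n)}[\vec\theta])]$ is smooth in $s\ge0$ with $\tfrac{\partial}{\partial s}h_n(s)=-\tfrac1k\,\pr\brk{\text{a fresh random }k\text{-row is not in the row span of }\vA_{n,\vm(s,n)}[\vec\theta]}$, because adding one further row lowers the nullity by exactly the indicator that it is linearly independent of the previous ones. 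Since $h_n(0)=1-O(1/n)$ (at density $0$ only the $\vec\theta-1$ unit rows are present), integration yields, for any $D>0$,
$h_n(D)=1-\tfrac{D}{k}+\tfrac1k\int_0^D\pr\brk{\text{a fresh }k\text{-row lies in the row span of }\vA_{n,\vm(s,n)}[\vec\theta]}\,\dd s-O(1/n)$.

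The crucial observation is that whenever all $k$ (distinct) positions of a fresh row fall inside $S_0(\vA_{n,m}[\vec\theta])$, that row is automatically orthogonal to the whole kernel and hence lies in the row span; this event has probability $\binom{|S_0|}{k}\binom{n}{k}^{-1}\ge\bigl((|S_0|-k)_+/n\bigr)^k$. Together with Jensen's inequality (convexity of $t\mapsto t^k$) this lower-bounds the integrand above by $\bigl(\Erw|S_0(\vA_{n,\vm(s,n)}[\vec\theta])|/n\bigr)^k-O(1/n)$. Since $m\mapsto|S_0(\vA_{n,m}[\vec\theta])|$ is non-decreasing (appending rows only shrinks the kernel), the map $s\mapsto\Erw|S_0(\vA_{n,\vm(s,n)}[\vec\theta])|$ is non-decreasing; by the choice of $c$ there is a subsequence of $n$ along which $\Erw|S_0(\vA_{n,\vm(s,n)}[\vec\theta])|/n\ge 2c$ for every $s\ge d$, so for $s\in[d,d_k)$ the integrand is eventually at least $c^k$. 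Choosing $D=(d+d_k)/2\in(d,d_k)$ and discarding the (nonnegative) part of the integral outside $[d,D]$ gives $h_n(D)\ge1-\tfrac{D}{k}+\tfrac{(D-d)c^k}{k}-O(1/n)$ along that subsequence, and the reduction from the first paragraph yields $\limsup_{n\to\infty}\tfrac1n\Erw[\nul(\vA_{n,\vm(D,n)})]\ge1-\tfrac{D}{k}+\tfrac{(D-d)c^k}{k}>1-\tfrac{D}{k}$, which is the claim.

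I expect the only genuinely technical point to be the rigorous verification of the Poisson differentiation/integration identity (justifying term-by-term differentiation of the Poisson sum for fixed $n$, fixing the boundary value at $s=0$, and setting up the monotone coupling of $\vA_{n,m}$ and $\vA_{n,m+1}$), together with routine bookkeeping of the $\limsup$/subsequence and of the $O(1/n)$ error coming from Jensen. The conceptual heart — that a $k$-equation all of whose variables lie in $S_0$ is automatically linearly dependent — is entirely elementary.
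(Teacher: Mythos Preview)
Your proposal is correct and follows essentially the same approach as the paper: interpolate in the density parameter, use monotonicity of $S_0$ under adding rows, and observe that a fresh equation whose support lies in $S_0$ is automatically in the row span, so the nullity decays strictly slower than at rate $1/k$. The only cosmetic differences are that the paper works with $\ln Z=\nul\cdot\ln q$, integrates from $d$ rather than from $0$, and converts $\Erw|S_0|/n\ge\delta$ into a pointwise lower bound via a reverse Markov step instead of your Jensen argument; your treatment is if anything slightly cleaner.
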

\begin{proof} 
Assume that there exist $0<d<d_k$, $T>0$ and $\delta>0$ such that for infinitely many $n$,
\begin{align}\label{pos_frac}
\frac1n\Erw|S_0(\vA[\vec\theta])| \geq\delta.
\end{align}
The premise (\ref{pos_frac}) then extends to all $d'>d$ by a Poisson coupling argument.
Indeed, with independent $\vec \theta$, $\vm\sim\Po(dn/k)$, $\GAMMA=\Po((d'-d)n/k)$ and $\vm'=\vm+\GAMMA\sim\Po(d'n/k)$ we can couple $\vA_{n,\vm'}[\vec\theta]$ and $\vA_{n,\vm}[\vec\theta]$ by first generating $\vA_{n,\vm}$, then adding $\GAMMA$ further independent rows to obtain $\vA_{n,\vm'}[\vec\theta]$ and finally appending $I(\vec \theta)$ at the bottom of both matrices.
In effect, $S_0(\vA_{n,\vm}[\vec \theta])\subset S_0(\vA_{n,\vm'}[\vec\theta])$.
Thus, (\ref{pos_frac}) implies that
\begin{align} \label{greater_d}
\frac1n\Erw|S_0(\vA_{n,\vm'}[\vec\theta])|\geq \delta \qquad \text{for all} \quad d' \geq d
\end{align}
for the same $\delta$ and the same subsequence of variable numbers.

With these observations in mind, we estimate $\frac 1n \frac{\partial}{\partial d}\Erw\brk{\ln Z\left(\vA_{n,\vec m(d,n) }[\vec \theta]\right)}$ for $\bar{d}<d<d_k$ under assumption (\ref{pos_frac}) and show that if (\ref{pos_frac}) were true, there would be too many solutions at parameter value $d=d_k$.
This is because if many variables are frozen to zero, then it will be more likely that a random solution to the homogeneous system extends to a solution of the homogeneous system with one more random row.

To make this precise, for each $n$, we couple $\vA_{n,\vec m}[\vec \theta]$ and $\vA_{n,\vec m + 1}[\vec\theta]$ as in the Poisson coupling argument above: first generate $\vA_{n, \vec m}$, then independently choose a sequence $1\leq\vec\alpha_{\vec m +1,1} < \cdots < \vec\alpha_{\vec m+1,k}\leq n$ of integers uniformly at random
and independently choose a $k$-tuple $\vec a= (\vec a_{\vec m + 1,1},\ldots,\vec a_{\vec m +1,k})$ from the distribution $P$.
Then obtain $\vA_{n, \vec m+1}$ by adding a row with entries $\vec a_{\vec m +1,h}$ in the positions $\vec\alpha_{\vec m+1,h}$, while all other entries are zero.
Finally, add the matrix that freezes the first $\vec \theta$ variables to zero.
Then $\ker(\vA_{n, \vec m+1}[\vec \theta])\subset\ker(\vA_{n, \vec m}[\vec \theta])$.
Further, differentiating with respect to $d$ gives
\begin{align*}
\frac 1n \frac{\partial}{\partial d}\Erw\brk{\ln Z\left(\vA_{n,\vec m }[\vec\theta]\right)} &= \frac1n\sum_{m=0}^\infty\brk{\frac{\partial}{\partial d}\pr\brk{\Po(dn/k)=m}}
		\Erw[\ln Z\left(\vA_{n,\vec m }[\vec\theta]\right)|\vm=m]\\
		&=\frac1k\sum_{m=0}^\infty\brk{\vecone\{m\geq1\}\pr\brk{\Po(dn/k)=m-1}-\pr\brk{\Po(dn/k)=m}}\Erw[\ln Z\left(\vA_{n,\vec m }[\vec\theta]\right)|\vm=m]\\
&= \frac 1k \Erw\brk{\ln \frac{Z\left(\vA_{n,\vec m + 1}[\vec\theta]\right)}{Z\left(\vA_{n,\vec m}[\vec\theta]\right)}}.
\end{align*}
To calculate the last term we consider two cases.
\begin{description}
\item[Case 1: $\vec\alpha_{\vec m + 1,1},\ldots,\vec\alpha_{\vec m +1,k} \in {S_0(\vA_{n,\vm}\brk{\vec\theta})}$]
the new equation only requires a linear combination of variables to be zero, each of which is already frozen to zero by the first $\vec m $ or the last $\vec\theta$ equations. 
In effect, the number of solutions does not change, and thus
$
Z\left(\vA_{n,\vec m + 1}[\vec\theta]\right) = Z\left(\vA_{n,\vec m}[\vec\theta]\right).
$
\item[Case 2: there is $i \in\brk k$ such that $\vec a_{\vec m + 1, i} \notin S_0(\vec A_{n,\vec m}\brk{\vec\theta})$] adding one row cannot increase the rank  by more than one, and thus $\nul(\vA_{n, \vec m}[\vec\theta])\leq\nul(\vA_{n,\vec m + 1}[\vec\theta])+1$, whence
$
Z\left(\vA_{n,\vec m + 1}[\vec\theta]\right) \geq Z\left(\vA_{n,\vec m}[\vec\theta]\right)/q.
$
\end{description}
On the subsequence under consideration, we have $\frac1n\Erw|S_0(\vA_{n, \vec m}[\vec\theta])| \geq \delta$ and thus $\frac{1}{n}|S_0(\vA_{n, \vec m}[\vec\theta])| \geq \frac{\delta}{2}$ with probability at least $\frac{\delta}{2}$. Consequently, {case 1} occurs with probability at least $(\delta/(2\eul))^k\cdot(\delta/2) >0$.
Thus, if (\ref{pos_frac}) holds, then there is $\eps=\eps(\delta)> 0$ such that
\begin{align}\label{eqgreater_d}
\frac 1n \frac{\partial}{\partial d}\Erw\brk{\ln Z\left(\vA_{n,\vec m }[\vec\theta]\right)} \geq \frac{1}{k} \left( \eps - \ln q\right).
\end{align} 
Finally, for $0<\bar{d}<d$ and any $n$,
\begin{align}\label{eqgreater_d_2}
\frac{1}{n}\Erw\brk{\ln Z\left(\vA_{n,\vm(\bar{d},n)}[\vec \theta]\right)} = \ln q \bc{1 -\Erw\brk{\rk(\vA_{n,\vm(\bar{d},n)}[\vec \theta])/n}} \geq  \ln q \bc{1 -\Erw\brk{\vm(\bar{d},n)/n}} = \ln q \bc{1-\frac{\bar{d}}{k}}
\end{align}
and thus, for $d<D<d_k$, the monotonicity property (\ref{greater_d}) together with (\ref{eqgreater_d}) and (\ref{eqgreater_d_2}) yields that
	\begin{align*}
	\limsup_{n \to \infty} \frac{1}{n}\Erw\brk{\ln Z\left(\vA_{n,\vm(D,n)}\right)}&\geq
	\limsup_{n \to \infty} \frac{1}{n}\Erw\brk{\ln Z\left(\vA_{n,\vm(D,n)}[\vec \theta]\right)}\\
	&> \limsup_{n \to \infty} \frac{1}{n}\Erw\brk{\ln Z\left(\vA_{n,\vm(d,n)}[\vec \theta]\right)} + \limsup_{n \to \infty} \int_{d}^D \frac{1}{n}  \frac{\partial}{\partial \bar{d}}\Erw\brk{\ln Z\left(\vA_{n,\vec m\bc{\bar{d},n} }[\vec\theta]\right)}\text{d}\bar{d} \\
	& > \left(1 - D/k \right)\ln q,
	\end{align*}
whence the assertion follows from (\ref{eqZnullity}). 
\end{proof}

\begin{lemma}\label{Lemma_freeze}
For any $\eps>0$ there exist $T>0$ and $n_0>0$ such that for all $n>n_0$, all $d>0$ and uniform $\vec \theta \in [T]$, the following is true.
If $\pr\brk{\max_{i\geq0}|S_i(\vA)|\geq\eps n}>\eps$, then $\pr\brk{S_0(\vA[\vec\theta])\geq\eps n}>\eps/2.$
\end{lemma}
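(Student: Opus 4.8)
The plan is to follow how the frozen set $S_0$ grows as the $\vec\theta-1$ rows of $I(\vec\theta)$ are appended to $\vA$ one at a time. By the column-permutation invariance of the distribution of $\vA$ — the observation already used for \Cor~\ref{Cor_0pinning} — appending $I(\vec\theta)$, which freezes $x_1,\dots,x_{\vec\theta-1}$, has the same effect on the joint law of $\bc{\max_{i\ge0}|S_i(\vA)|,\ |S_0(\vA[\vec\theta])|}$ as freezing $\vec\theta-1$ uniformly random \emph{distinct} variables $\vi_1,\dots,\vi_{\vec\theta-1}$, chosen independently of $\vA$ (both quantities are invariant under column permutations of $\vA$). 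Write $A_t$ for the matrix obtained from $\vA$ by appending the rows freezing $\vi_1,\dots,\vi_t$, so that $A_0=\vA$ and $\vA[\vec\theta]=A_{\vec\theta-1}$.

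The structural input, immediate from \Lem~\ref{Prop_kernel} and essentially already recorded in the proof of \Lem~\ref{Lemma_0pinning}, is this: the sets $S_0(A_t)$ are non-decreasing in $t$; and when $\vi_{t+1}\notin S_0(A_t)$, appending the row $x_{\vi_{t+1}}=0$ moves into $S_0(A_{t+1})$ precisely the class $S_u(A_t)$ that contains $\vi_{t+1}$, while every other class $S_v(A_t)$ with $v\ge1$ survives intact — it may merge with other surviving classes but is never shrunk — because $\ker(A_{t+1})=\ker(A_t)\cap\{x_{\vi_{t+1}}=0\}$ can only coarsen the proportionality relation defining the classes, and by part~(iii) of \Lem~\ref{Prop_kernel} a coordinate in a class other than $S_u(A_t)$ is not forced to zero. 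In particular, if we fix a class $S_{u^\star}(\vA)$ with $u^\star\ge1$ and let $C_t$ be the class of $A_t$ containing $S_{u^\star}(\vA)$ (with the convention $C_t=S_0(A_t)$ once $S_{u^\star}(\vA)$ has been absorbed into $S_0$), then $|C_t|$ is non-decreasing, and at the first step $t+1$ for which $\vi_{t+1}\in C_t$ we get $|S_0(A_{t'})|\ge|C_t|\ge|S_{u^\star}(\vA)|$ for every $t'\ge t+1$.

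Now let $\cB=\{\max_{i\ge0}|S_i(\vA)|\ge\eps n\}$, an event depending only on $\vA$, and split it as $\cB_0=\{|S_0(\vA)|\ge\eps n\}$ and $\cB_1=\cB\setminus\cB_0$. On $\cB_0$, monotonicity of $S_0$ gives $|S_0(\vA[\vec\theta])|\ge\eps n$ with certainty. On $\cB_1$ choose $u^\star\ge1$ with $|S_{u^\star}(\vA)|\ge\eps n$; then $|C_t|\ge\eps n$ until absorption, so, since $\vi_{t+1}$ is uniform over the $n-t$ still-unfrozen variables, the conditional probability that $\vi_{t+1}\in C_t$ given the past is at least $(\eps n-t)/(n-t)\ge(\eps n-t)/n\ge\eps-T/n\ge\eps/2$ once $n>n_0:=\lceil 2T/\eps\rceil$. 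Hence, conditionally on $\vA\in\cB_1$ and on $\vec\theta$, the probability that none of the $\vec\theta-1$ steps lands in the current class is at most $(1-\eps/2)^{\vec\theta-1}$, and averaging over $\vec\theta$ uniform on $[T]$ yields $\pr[|S_0(\vA[\vec\theta])|\ge\eps n\mid\vA]\ge1-\frac1T\sum_{\theta=1}^{T}(1-\eps/2)^{\theta-1}\ge1-\frac{2}{T\eps}\ge\frac12$ provided $T\ge4/\eps$. Since $\pr[|S_0(\vA[\vec\theta])|\ge\eps n\mid\vA]\ge\frac12$ throughout $\cB$, we conclude $\pr[|S_0(\vA[\vec\theta])|\ge\eps n]\ge\frac12\pr[\cB]>\eps/2$; thus $T=\lceil 4/\eps\rceil$ and $n_0=\lceil 2T/\eps\rceil$ work.

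\textbf{Main obstacle.} The only step that is not bookkeeping with a geometric first-hitting time is the persistence claim in the second paragraph: that freezing a variable \emph{outside} a class can never shrink or destroy that class, only (possibly) merge it with other surviving classes, so that the large class $S_{u^\star}(\vA)$ of the original matrix remains of size $\ge\eps n$ right up to the step that absorbs it into $S_0$. This is where part~(iii) of \Lem~\ref{Prop_kernel} (independence of coordinates lying in distinct classes) is essential; once it is in place, everything else is routine.
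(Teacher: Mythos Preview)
Your proof is correct and follows essentially the same route as the paper's: track a large class through the one-at-a-time freezing process, use the merge-only property of the classes (which you correctly pin on part~(iii) of \Lem~\ref{Prop_kernel}), bound the first-hitting time geometrically, and average over $\vec\theta$. Your treatment is in fact a bit more careful than the paper's in two respects: you make the column-permutation invariance reduction explicit (the paper leaves it implicit), and you handle the case $|S_0(\vA)|\ge\eps n$ separately; your hitting probability bound $(\eps n-t)/(n-t)$ is slightly more conservative than necessary (since $C_t$ is disjoint from the already-frozen indices, $\eps n/(n-t)\ge\eps$ would do), but this only affects the constants.
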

\begin{proof}
Fix	$\eps>0$ and consider the decomposition $S_0(\vA),\ldots,S_n(\vA)$ promised by \Lem~\ref{Prop_kernel}.
As 
$$\pr\brk{S_0(\vA[\vec\theta])\geq\eps n}\geq \pr\brk{S_0(\vA[\vec\theta])\geq\eps n\,\bigg|\,\max_{i\geq0}|S_i(\vA)|\geq\eps n}\cdot \pr\brk{\max_{i\geq0}|S_i(\vA)|\geq\eps n},$$
it suffices to prove that
\begin{align}\label{eqLemma_freeze1}
\pr\brk{S_0(\vA[\vec\theta])\geq\eps n\,\bigg|\,\max_{i\geq0}|S_i(\vA)|\geq\eps n}\geq\frac12.
\end{align}
Denote by $\vA[t]$ the matrix $\vA$ with the deterministic-size $t
\times n$ matrix $I(t), I_{i,j}(t)=\vecone\cbc{i=j},$ appended at the
bottom. As argued in the proof of \Lem~\ref{Lemma_0pinning}, the
addition of the equations $x_{t}=0$, $t\in 0, \ldots,\vec \theta-1$, does not only freeze index $t$ to zero, but also all indices in the class $S_u(\vA[t-1])$ that index $t$ belongs to: $S_u(\vA[t-1])\subset S_0(\vA[t])$. Now, the partition $S_0(\vA[t]), \ldots, S_n(\vA[t])$ changes when $t$ increases, but sets can only merge during this process: Hence, if $t \notin S_u(\vA[t-1])$, the elements in $S_u(\vA[t-1])$ will belong to a set $S_{u'}(\vA[t])$ in the next step which has $|S_{u'}(\vA[t])|\geq |S_{u}(\vA[t])|$.

Assume that there is $u \in [n]$ with $|S_u(\vA)|\geq\eps n$. The probability of the event $\cbc{S_0(\vA[\vec\theta])<\eps n}$ is upper bounded by the probability that no index $1, \ldots, \vec \theta-1$ belongs to $S_u(\vA)$. This in turn is upper bounded by the probability that a geometric random variable with success parameter $\eps$ is greater than $\vec \theta-1$.
Hence,
	\begin{align*}
	\pr\brk{S_0(\vA[\vec \theta])<\eps n\,\bigg|\,\max_{i\geq0}|S_i(\vA)|\geq\eps n} = \sum_{t=1}^T\frac{1}{T} \pr\brk{S_0(\vA[\vi_1,\ldots,\vi_{t-1}])< \eps n, \,\bigg|\,\max_{i\geq0}|S_i(\vA)|\geq\eps n} \leq \frac{1-\bc{1-\eps}^{T}}{T\eps}.
	\end{align*}
By choosing $T>0$ large enough, we obtain (\ref{eqLemma_freeze1}).
\end{proof}

\begin{proof}[Proof of  \Prop~\ref{Prop_wrap}] Let $d<d_k$. For any $\eps>0$ and $n$ we have the upper bound
\begin{align}\label{eqProp_wrap}
\frac1n\Erw[\max_{i\geq0}|S_i(\vA)|] \leq \ \pr\brk{\max_{i\geq0}|S_i(\vA)|] \geq \eps n}+ \eps.
\end{align}
Hence, if $\limsup_{n\to\infty}\frac1n\Erw[\max_{i\geq0}|S_i(\vA)|]>0$, then there exists $\eps>0$ such that
$\pr\brk{\max_{i\geq0}|S_i(\vA)|] \geq \eps n} > \eps$ for infinitely many $n$.
Thus, by Lemma \ref{Lemma_freeze} there is a $T>0$ such that for $\vec \theta \in [T]$ chosen uniformly, $\pr\brk{S_0(\vA[\vec\theta])\geq\eps n}>\eps/2$ for infinitely many $n$.
Therefore, the assertion follows from Lemma \ref{Lemma_unfrozen}. 
\end{proof}

\section{Proof of \Prop~\ref{Prop_smm}}\label{Sec_Prop_smm}

\noindent
Starting from the overlap condition \eqref{eqOverlap1}, we will set up a truncated second moment calculation where we only need to consider pairs of solutions with overlap close to $\bar\omega$.
To this end, for $A\in\FF_q^{m\times n}$, $y\in\FF_q^m$ and $\eps>0$ we define
	\begin{align}\label{eqOverlap11}
	\cZ_\eps(A,y)&=\vecone\cbc{\bck{\tv{\omega(\vec x,\vec x')-\bar\omega}}_{A,y}<\eps}\cdot Z(A,y).
	\end{align}
Thus, we discard linear systems whose typical overlaps $\omega(\vec x,\vec x')$ fail to concentrate about $\bar\omega$.
To prove \Prop~\ref{Prop_smm} we calculate the first and the second moment of the random variable $\cZ_\eps$ from \eqref{eqOverlap11}.
The following lemma deals with the first moment; the key proof ingredient is the Nishimori identity (\ref{eqNishimori}).

\begin{lemma}\label{Lemma_EZ}
Assume that (\ref{eqOverlap1}) is satisfied for $d>0$.
Then for any $\eps>0$ we have
	\begin{align}\label{eqLemma_EZ}
	\lim_{n\to\infty}\pr\brk{\Erw[\cZ_\eps(\vA,\vy)|\vm]\geq (1-\eps)q^{n-\vm}}&=1.
	\end{align}
	\end{lemma}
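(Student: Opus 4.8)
The plan is to compute $\Erw[\cZ_\eps(\vA,\vy)\mid\vm=m]$ \emph{exactly} by means of the Nishimori identity (\ref{eqNishimori}) and then read off the conclusion from the overlap hypothesis (\ref{eqOverlap1}). The key algebraic point is that size-biasing the linear system $(\vA,\vy)$ by its number of solutions reproduces the planted pair $(\vA,\vhy)$. Concretely, since $\vy$ is uniform on $\FF_q^m$ and independent of $\vA$ given $\vm=m$, for any function $g$ on pairs $(A,y)$ we have
\begin{align*}
\Erw\brk{g(\vA,\vy)\,Z(\vA,\vy)\mid\vm=m}&=\sum_{A,y}g(A,y)\,Z(A,y)\,q^{-m}\,\pr\brk{\vA=A\mid\vm=m}.
\end{align*}
Rewriting the summand with (\ref{eqNishimori}) as $Z(A,y)\,q^{-m}\,\pr[\vA=A\mid\vm=m]=\Erw[Z(\vA,\vy)\mid\vm=m]\cdot\pr[\vA=A,\vhy=y\mid\vm=m]$ and using $\Erw[Z(\vA,\vy)\mid\vm=m]=\sum_{x\in\FF_q^n}\pr[\vA x=\vy\mid\vm=m]=q^{n-m}$ (the $\FF_q$-analogue of (\ref{eqvanilla1st})), the identity collapses to
\begin{align*}
\Erw\brk{g(\vA,\vy)\,Z(\vA,\vy)\mid\vm=m}&=q^{n-m}\,\Erw\brk{g(\vA,\vhy)\mid\vm=m},
\end{align*}
which is a restatement of the third part of \Lem~\ref{Fact_Nishimori}. (Pairs with $y\notin\Im(A)$ contribute $0$ on both sides and may be disregarded, so there is no issue with the normalisation.)

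I would then apply this identity with $g(A,y)=\vecone\{\bck{\tv{\omega(\vx,\vx')-\bar\omega}}_{A,y}<\eps\}$. By the definition (\ref{eqOverlap11}) of $\cZ_\eps$ the left-hand side becomes $\Erw[\cZ_\eps(\vA,\vy)\mid\vm=m]$, whence
\begin{align*}
\Erw\brk{\cZ_\eps(\vA,\vy)\mid\vm=m}&=q^{n-m}\,\pr\brk{\bck{\tv{\omega(\vx,\vx')-\bar\omega}}_{\vA,\vhy}<\eps\,\Big|\,\vm=m}=q^{n-m}\bc{1-p_{n,m}},
\end{align*}
where $p_{n,m}:=\pr[\bck{\tv{\omega(\vx,\vx')-\bar\omega}}_{\vA,\vhy}\geq\eps\mid\vm=m]$. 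Since $q^{n-\vm}>0$, the event appearing in (\ref{eqLemma_EZ}) is \emph{exactly} $\{p_{n,\vm}\leq\eps\}$, so it remains to prove that $\pr[p_{n,\vm}>\eps]\to0$.

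At this point the overlap hypothesis does all the work. Write $\Psi:=\bck{\tv{\omega(\vx,\vx')-\bar\omega}}_{\vA,\vhy}$, regarded as a random variable of the planted model (over $\vm$, $\vA$ and $\vhx$, with $\vhy=\vA\vhx$); then (\ref{eqOverlap1}) says $\Erw[\Psi]=o(1)$. Since $p_{n,\vm}=\pr[\Psi\geq\eps\mid\vm]$ and $p_{n,\vm}\in[0,1]$, two applications of Markov's inequality — first to the nonnegative $\vm$-measurable variable $p_{n,\vm}$, then to $\Psi$ itself — yield
\begin{align*}
\pr\brk{p_{n,\vm}>\eps}&\leq\eps^{-1}\,\Erw\brk{p_{n,\vm}}=\eps^{-1}\,\pr\brk{\Psi\geq\eps}\leq\eps^{-2}\,\Erw\brk{\Psi}=o(1),
\end{align*}
which is precisely the claim (\ref{eqLemma_EZ}).

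I do not expect a genuine obstacle: the only real idea is the recognition that the Nishimori identity turns the overlap-truncated first moment $\Erw[\cZ_\eps(\vA,\vy)\mid\vm]$ into the probability that the \emph{planted} system has a concentrated overlap, which is exactly the quantity controlled by (\ref{eqOverlap1}). The mild points to watch are the bookkeeping between the law of $\vm$ and the conditioning on $\vm=m$ (handled by the double-Markov step) and the degenerate contributions from unsatisfiable systems, neither of which causes any difficulty.
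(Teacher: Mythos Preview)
Your proof is correct and follows essentially the same route as the paper: compute $\Erw[\cZ_\eps(\vA,\vy)\mid\vm=m]$ via the Nishimori identity (\ref{eqNishimori}) to obtain $q^{n-m}\pr[\bck{\tv{\omega(\vx,\vx')-\bar\omega}}_{\vA,\vhy}<\eps\mid\vm=m]$, and then invoke (\ref{eqOverlap1}). Your double-Markov step that passes from the unconditional hypothesis (\ref{eqOverlap1}) to the statement about $p_{n,\vm}$ is a welcome bit of care that the paper leaves implicit.
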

\begin{proof}
Writing the expectation out as a sum over all possible $m\times n$-matrices $A$ and all $y\in\FF_q^m$ and applying (\ref{eqNishimori}), we obtain
	\begin{align*}
	\Erw[\cZ_\eps(\vA,\vy)|\vm=m]&=\Erw\brk{\vecone\cbc{\bck{\tv{\omega(\vec x,\vec x')-\bar\omega}}_{\vA,\vy}<\eps}Z(\vA,\vy)\bigg|\vm=m}\\
		&=q^{n-m}\sum_{A,y}
			\vecone\cbc{\bck{\tv{\omega(\vec x,\vec x')-\bar\omega}}_{A,y}<\eps}Z(A,y)q^{-n}\pr\brk{\vA=A|\vm=m}\\
		&=q^{n-m}\pr\brk{\bck{\tv{\omega(\vec x,\vec x')-\bar\omega}}_{\vA,\vhy}<\eps\big|\vm=m}.
	\end{align*}
Thus, the assertion follows from \eqref{eqOverlap1}. 
\end{proof}

\noindent
Comparing (\ref{eqLemma_EZ}) with the vanilla first moment (\ref{eqvanilla1st}), we see that the truncation (\ref{eqOverlap11}) does not diminish the first moment.
The following lemma summarises the analysis of the second moment of the truncated variable.

\begin{lemma}\label{Lemma_EZ2}
Assume that (\ref{eqOverlap1}) is satisfied for $d>0$.
Then there exists $\eps_0=\eps(d,q,k)>0$ such that for every $0<\eps<\eps_0$ for large enough $n$ and all $m\leq 2dn/k$, 
	\begin{align}\label{eqLemma_EZ2}
	\lim_{n\to\infty}\pr\brk{\Erw[\cZ_\eps(\vA,\vy)^2|\vm=m]\leq (1+\eps^{1/4})q^{2(n-m)}}&=1.
	\end{align}
\end{lemma}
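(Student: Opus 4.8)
The plan is to combine a Markov-inequality argument with a truncated second moment computation. Fix an integer $m\le 2dn/k$ and write $P_z=\pr[\vec a\cdot z=0]$ for the probability that a single random row $\vec a$ of $\vA$ is orthogonal to a fixed vector $z\in\FF_q^n$; note that $P_z$ depends only on the multiset of entries of $z$. Since $\vy$ is uniform and independent of $\vA$, a one-line computation gives $\pr[\vA x=\vy,\ \vA x'=\vy\mid\vm=m]=(q^{-1}P_{x-x'})^m$ (and in particular $\Erw[Z(\vA,\vy)\mid\vm=m]=q^{n-m}$, matching \eqref{eqvanilla1st}). First I would unfold the truncation in \eqref{eqOverlap11}: whenever $\bck{\tv{\omega(\vx,\vx')-\bar\omega}}_{A,y}<\eps$, Markov's inequality yields $\bck{\vecone\{\tv{\omega(\vx,\vx')-\bar\omega}\ge\sqrt\eps\}}_{A,y}<\sqrt\eps$, so at least a $(1-\sqrt\eps)$-fraction of the pairs in $\cS(A,y)^2$ have overlap within $\sqrt\eps$ of $\bar\omega$, whence $\cZ_\eps(A,y)^2\le(1-\sqrt\eps)^{-1}\sum_{x,x'\,:\,\tv{\omega(x,x')-\bar\omega}<\sqrt\eps}\vecone\{Ax=y,\,Ax'=y\}$. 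Taking $\Erw[\nix\mid\vm=m]$ gives
\[
\Erw[\cZ_\eps(\vA,\vy)^2\mid\vm=m]\ \le\ \frac1{1-\sqrt\eps}\sum_{x,x'\,:\,\tv{\omega(x,x')-\bar\omega}<\sqrt\eps}(q^{-1}P_{x-x'})^m.
\]
Now $\tv{\omega(x,x')-\bar\omega}<\sqrt\eps$ forces the empirical distribution $\lambda_{x-x'}$ of the entries of $z=x-x'$ to satisfy $\|\lambda_{x-x'}-\bar\lambda\|_1\le 2\tv{\omega(x,x')-\bar\omega}<2\sqrt\eps$, where $\bar\lambda$ is uniform on $\FF_q$, and provided $\eps_0<(1-1/q)^2$ it also forces $z\neq0$ (so the diagonal, the source of the lottery effect, drops out automatically). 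I would then bound the sum by the sum over \emph{all} $z\in\FF_q^n$ with $\|\lambda_z-\bar\lambda\|_1<2\sqrt\eps$, each the difference of $q^n$ pairs, and group the $z$'s by their type; writing $N_\lambda=\#\{z:\lambda_z=\lambda\}$ for the relevant multinomial coefficient and $P_\lambda:=P_z$ for $z$ of type $\lambda$, this bounds $\Erw[\cZ_\eps(\vA,\vy)^2\mid\vm=m]$ by
\[
\frac{q^{2(n-m)}}{1-\sqrt\eps}\cdot q^{-n}\sum_{\lambda\,:\,\|\lambda-\bar\lambda\|_1<2\sqrt\eps}N_\lambda\,(qP_\lambda)^m ,
\]
so it suffices to prove $\sum_{\lambda\,:\,\|\lambda-\bar\lambda\|_1<2\sqrt\eps}N_\lambda(qP_\lambda)^m\le(1+o(1))q^n$, uniformly in $m\le 2dn/k$.

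The heart of the proof is a sharp estimate of $P_\lambda$ near $\lambda=\bar\lambda$, and I expect this to be the main obstacle. I would expand the indicator $\vecone\{\sum_{h=1}^k a_hz_{\alpha_h}=0\}$ into the additive characters of $\FF_q$ and average over the uniformly random positions $\alpha_1<\dots<\alpha_k$ and over $a\sim P$. The trivial character $\chi_0$ contributes exactly $q^{-1}$. Had the positions been sampled with replacement, the non-trivial characters would contribute $q^{-1}\sum_{\chi\neq\chi_0}\Erw_{a\sim P}\brk{\prod_{h=1}^k\hat\lambda_{\chi^{a_h}}}$, where $\hat\lambda_{\chi'}=\sum_v\lambda(v)\chi'(v)$ and $\chi^a(v)=\chi(av)$; since every $a_h\neq0$, each factor obeys $|\hat\lambda_{\chi^{a_h}}|\le\|\lambda-\bar\lambda\|_1$, so this term is $O(\|\lambda-\bar\lambda\|_1^k)$. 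The correction for sampling without replacement comes from coinciding positions: in the leading contribution (two coinciding) two of the $k$ character factors merge, leaving a product of $k-2\ge1$ factors $\hat\lambda_{\chi^{a_h}}$, so this correction is $O(\|\lambda-\bar\lambda\|_1^{k-2}/n)$, and higher coincidences are smaller still. As $k\ge3$ this yields
\[
|qP_\lambda-1|\ \le\ C\bigl(\|\lambda-\bar\lambda\|_1^{3}+\|\lambda-\bar\lambda\|_1/n\bigr)\qquad\text{for a constant }C=C(q,k,P),
\]
valid once $\|\lambda-\bar\lambda\|_1$ is below a suitable absolute constant. The delicate point here is that a genuine $\Theta(1/n)$ discrepancy in $P_\lambda$ would be amplified by the exponent $m=\Theta(n)$ into a constant factor exceeding $1$, which would destroy the bound; it is therefore essential that the without-replacement correction carries the \emph{extra} factor $\|\lambda-\bar\lambda\|_1<2\sqrt\eps$, precisely the place where both $k\ge3$ and the near-uniformity of $\lambda$ are used.

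To finish I would perform a large-deviations split. By Pinsker's inequality $N_\lambda\le e^{nH(\lambda)}=q^ne^{-nD(\lambda\|\bar\lambda)}\le q^ne^{-n\|\lambda-\bar\lambda\|_1^2/2}$, while $\sum_\lambda N_\lambda=q^n$. For $\|\lambda-\bar\lambda\|_1<n^{-2/5}$ the estimate above together with $m\le 2dn/k$ gives $(qP_\lambda)^m\le\exp\bigl(Cm(\|\lambda-\bar\lambda\|_1^3+\|\lambda-\bar\lambda\|_1/n)\bigr)\le\exp(O(n^{-1/5}))=1+o(1)$, so these types contribute at most $(1+o(1))q^n$. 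For $n^{-2/5}\le\|\lambda-\bar\lambda\|_1<2\sqrt\eps$, writing $t=\|\lambda-\bar\lambda\|_1$, the same estimate and Pinsker give $N_\lambda(qP_\lambda)^m\le q^n\exp\bigl(-nt^2/2+\tfrac{2dC}{k}nt^3+\tfrac{2dC}{k}t\bigr)$, and once $\eps_0$ is small enough that $\tfrac{2dC}{k}\cdot 2\sqrt{\eps_0}\le\tfrac14$ the exponent is at most $-nt^2/4+O(1)\le-\tfrac14 n^{1/5}+O(1)$; since there are at most $(n+1)^q$ admissible types, these contribute $o(q^n)$. Hence the trailing sum is $(1+o(1))q^n$ uniformly in $m\le 2dn/k$, so $\Erw[\cZ_\eps(\vA,\vy)^2\mid\vm=m]\le\frac{1+o(1)}{1-\sqrt\eps}q^{2(n-m)}$. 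Finally, shrinking $\eps_0$ further so that $(1-\sqrt\eps)^{-1}<1+\eps^{1/4}$ for $0<\eps<\eps_0$ (which holds since $\sqrt\eps=o(\eps^{1/4})$), the strict inequality leaves room to absorb the $1+o(1)$ factor: for each such $\eps$ and all sufficiently large $n$, the right-hand side is at most $(1+\eps^{1/4})q^{2(n-m)}$, which is the claim.
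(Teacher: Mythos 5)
Your proof is correct, but it follows a genuinely different route from the paper's. The paper bounds $\cZ_\eps^2$ by the same truncated pair-count (your Markov step is exactly Claim~\ref{Claim_EZ2_1}), but then works with the full $q\times q$ overlap matrix $\omega$: it writes $\Erw[\fZ_\eps\mid\vm=m]$ as a sum of $e^{nH(\omega)+m\Phi(\omega)}$ over overlap types, shows by direct differentiation that the Jacobian of $\Phi$ is constant and its Hessian vanishes on the directions tangent to the simplex at $\bar\omega$, and then runs the Laplace method, evaluating the resulting $(q^2-1)$-dimensional Gaussian integral. You instead exploit linearity at the outset, reducing everything to the single difference vector $z=x-x'$ and its $(q-1)$-dimensional type $\lambda_z$, so that the whole problem becomes a bound on $q^{-n}\sum_\lambda N_\lambda(qP_\lambda)^m$. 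Your character-sum estimate $|qP_\lambda-1|=O(\|\lambda-\bar\lambda\|_1^{k}+\|\lambda-\bar\lambda\|_1/n)$ is the exact analogue of the paper's degenerate Jacobian/Hessian claims (the vanishing of the first $k-1\geq 2$ derivatives at $\bar\lambda$ is what both arguments need), but it makes the roles of $k\geq3$ and of the nonvanishing coefficients completely transparent, and your Pinsker-plus-dyadic-split finish avoids having to evaluate any Gaussian integral, at the cost of only getting $(1+o(1))q^{2(n-m)}$ rather than a sharp constant — which is all the lemma requires. One small correction: your stated bound on $qP_\lambda-1$ should carry an additional additive $O(1/n^2)$ term coming from coincidence patterns that exhaust all $k$ character factors (e.g.\ a triple coincidence when $k=3$ with $a_1+a_2+a_3=0$, where the conditional expectation is $\Theta(1)$ and is not accompanied by any power of $\|\lambda-\bar\lambda\|_1$); in particular $qP_{\bar\lambda}$ is generally $1+\Theta(1/n^2)$, not exactly $1$. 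This is harmless, since $(1+O(1/n^2))^m=1+o(1)$ for $m\leq 2dn/k$ and is absorbed into your final $(1+o(1))$ factor, but the inequality as written is not literally true at $\lambda=\bar\lambda$.
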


\noindent
The proof of \Lem~\ref{Lemma_EZ2} is based on the Laplace method.
To apply it, we bound $\cZ_\eps(\vA,\vy)^2$ {from above} by a further random variable whose mean we can calculate explicitly.
Specifically, let
	\begin{align}\label{eqfZ}
	\fZ_\eps(\vA,\vy)&=\sum_{x,x'\in\FF_q^n}\vecone\{\tv{\omega(x,x')-\bar\omega}\leq\eps,\,\vA x=\vA x'=\vy\}.
	\end{align}
Thus, $\fZ_\eps(\vA,\vy)$ is the number of pairs $x,x'$ of solutions whose overlap is within $\eps$ of $\bar\omega$ in total variation distance.

\begin{claim}\label{Claim_EZ2_1}
For any $0<\eps<1$ we have $\cZ_\eps(\vA,\vy)^2\leq\fZ_{\sqrt\eps}(\vA,\vy)/(1-\sqrt\eps)$.
\end{claim}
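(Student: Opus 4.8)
The plan is to prove the stated bound \emph{pointwise}, i.e.\ as an inequality between the values $\cZ_\eps(A,y)^2$ and $\fZ_{\sqrt\eps}(A,y)/(1-\sqrt\eps)$ for every fixed $A\in\FF_q^{m\times n}$ and $y\in\FF_q^m$; since both sides are functions of the random pair $(\vA,\vy)$, this immediately gives the asserted inequality between random variables. The engine of the argument is a single application of Markov's inequality to the uniform distribution on the (finite) set of pairs of solutions.

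Concretely, fix $A,y$. If $Z(A,y)=0$ or $\bck{\TV{\omega(\vec x,\vec x')-\bar\omega}}_{A,y}\geq\eps$, then the indicator in \eqref{eqOverlap11} forces $\cZ_\eps(A,y)=0$, while $\fZ_{\sqrt\eps}(A,y)\geq0$ and $1-\sqrt\eps>0$, so the claimed bound holds trivially. Hence assume $Z(A,y)>0$ and $\bck{\TV{\omega(\vec x,\vec x')-\bar\omega}}_{A,y}<\eps$. The bracket $\bck{\nix}_{A,y}$ is the expectation with respect to the uniform distribution on $\cS(A,y)\times\cS(A,y)$, and $\TV{\omega(x,x')-\bar\omega}\geq0$ for all $x,x'$, so Markov's inequality yields
	\begin{align*}
	\bck{\vecone\cbc{\TV{\omega(\vec x,\vec x')-\bar\omega}\geq\sqrt\eps}}_{A,y}
		&\leq\frac{\bck{\TV{\omega(\vec x,\vec x')-\bar\omega}}_{A,y}}{\sqrt\eps}<\sqrt\eps.
	\end{align*}
Multiplying through by $Z(A,y)^2$ and recalling \eqref{eqfZ}, the number of pairs $(x,x')\in\cS(A,y)^2$ with $\TV{\omega(x,x')-\bar\omega}\leq\sqrt\eps$ is at least $(1-\sqrt\eps)Z(A,y)^2$, i.e.\ $\fZ_{\sqrt\eps}(A,y)\geq(1-\sqrt\eps)Z(A,y)^2$. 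Since the truncation indicator in \eqref{eqOverlap11} is at most one, we get $\cZ_\eps(A,y)^2\leq Z(A,y)^2\leq\fZ_{\sqrt\eps}(A,y)/(1-\sqrt\eps)$, as claimed.

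There is no genuine obstacle here; the only points that call for a little care are the two degenerate cases ($Z(A,y)=0$ and the truncation indicator equal to $0$), and keeping the strict/non-strict inequalities straight so that the threshold $\leq\sqrt\eps$ appearing in the definition \eqref{eqfZ} of $\fZ_{\sqrt\eps}$ is comfortably satisfied by the $<\sqrt\eps$ bound coming from Markov's inequality.
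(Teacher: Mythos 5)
Your proof is correct and follows essentially the same route as the paper: the paper's argument is exactly the same case split, with the Markov-inequality step you spell out compressed into a one-line "clearly" (if the average of the nonnegative quantity $\TV{\omega(\vec x,\vec x')-\bar\omega}$ is below $\eps$, the fraction of pairs exceeding $\sqrt\eps$ is below $\sqrt\eps$). Nothing is missing.
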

\begin{proof}
If $\bck{\tv{\omega(\vec x,\vec x')-\bar\omega}}_{\vA,\vy}<\eps$, then clearly
$\bck{\vecone\{\tv{\omega(\vec x,\vec x')-\bar\omega}\leq\sqrt\eps\}}_{\vA,\vy}\geq1-\sqrt\eps$.
Hence,
	\begin{align*}
	\fZ_{\sqrt\eps}(\vA,\vy)&=\bck{\vecone\{\tv{\omega(\vec x,\vec x')-\bar\omega}\leq\sqrt\eps\}}_{\vA,\vy}Z(\vA,\vy)^2
		\geq(1-\sqrt\eps)Z(\vA,\vy)^2\geq(1-\sqrt\eps)\cZ_\eps(\vA,\vy)^2.
	\end{align*}
On the other hand, if $\bck{\tv{\omega(\vec x,\vec x')-\bar\omega}}_{\vA,\vy}\geq\eps$, then $0=\cZ_\eps(\vA,\vy)^2\leq\fZ_{\sqrt\eps}(\vA,\vy)$.
\end{proof}
	
\noindent
We are going to show that there is a constant $C>0$ such that for small enough $\eps>0$,
\begin{align}\label{eqLemma_EZ2_1}
\Erw[\fZ_\eps(\vA,\vy)|\vm]&\leq q^{2(n-\vm)+C\eps}.
\end{align}
Then (\ref{eqLemma_EZ2}) is immediate from Claim~\ref{Claim_EZ2_1}.
To establish (\ref{eqLemma_EZ2_1})  we derive an explicit formula as an upper bound for $\Erw[\fZ_\eps(\vA,\vy)|\vm]$.
Let $\cO_\eps(n)$ be the set of all probability distributions $\omega\in\cP(\FF_q\times\FF_q)$ such that
$\tv{\omega-\bar\omega}\leq\eps$ and $n\omega(\sigma,\tau)$ is an integer for all $\sigma,\tau\in\FF_q$.
Thus, $\cO_\eps(n)$ contains all possible overlaps that may contribute to the sum on the right hand side of (\ref{eqfZ}).
Let $H(\omega)=-\sum_{\sigma,\tau\in\FF_q}\omega_{\sigma,\tau}\ln\omega_{\sigma,\tau}$ denote the entropy of $\omega\in\cP(\FF_q\times\FF_q)$.
Additionally,  we recall that $P\in\cP(\FF_q^{*\,k})$ signifies the distribution of the non-zero entries of the rows of $\vA$.

\begin{claim}\label{Claim_formula}
	For $\omega\in\cP(\FF_q\times\FF_q)$ let
	\begin{align*}
	\Phi(\omega)&=\ln\brk{\sum_{\substack{a_1,\ldots,a_k,y\in\FF_q}}
		\sum_{\sigma,\tau\in\FF_q^k}
		\vecone\cbc{\sum_{i=1}^ka_i\sigma_i=\sum_{i=1}^ka_i\tau_i=y}
		\frac{P(a_1,\ldots,a_k)}q\prod_{i=1}^k\omega_{\sigma_i,\tau_i}}.
	\end{align*}
There exist $C=C(d,k,q)>0$ and $\eps_0=\eps_0(d,k,q)>0$ such that for all $0<\eps<\eps_0(d,k,q)$ 
and all $m\leq 2dn/k$ 
we have 
	\begin{align}\label{eqClaim_formula}
	\Erw[\fZ_\eps(\vA,\vy)|\vm=m]&\leq(2\pi n)^{(1-q^2)/2}\sum_{\omega\in\cO_\eps(n)}
	\frac{\exp(n H(\omega)+m\Phi(\omega)+C\eps)}{\prod_{\sigma,\tau\in\FF_q}\sqrt{\omega_{\sigma,\tau}}}.
	\end{align}
\end{claim}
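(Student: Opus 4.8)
The plan is to expand $\Erw[\fZ_\eps(\vA,\vy)\mid\vm=m]$ as a sum over ordered pairs $(x,x')$ of potential solutions, group the pairs by their overlap, and estimate separately how many pairs have a given overlap and the probability that such a pair solves the system. For the second part I first condition on $\vm=m$ and note that, since $\vy$ is uniform on $\FF_q^m$ and independent of $\vA$ and the rows of $\vA$ together with the coordinates of $\vy$ are i.i.d., the event $\{\vA x=\vA x'=\vy\}$ factorises over rows: its probability is the $m$-th power of the probability that a single random row $\vec a$ of $\vA$ and an independent uniform element $y_1\in\FF_q$ satisfy $\langle\vec a,x\rangle=\langle\vec a,x'\rangle=y_1$. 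A single row is generated by choosing a uniform $k$-subset of coordinates and an independent $P$-distributed tuple of nonzero coefficients, and since $P$ is permutation-invariant this single-row probability depends on $x,x'$ only through the overlap $\omega(x,x')$: it equals $\tfrac1q\sum_{y}\sum_aP(a)\sum_{\sigma,\tau\in\FF_q^k}\vecone\{\sum_ia_i\sigma_i=\sum_ia_i\tau_i=y\}$ times the multivariate hypergeometric probability of drawing, without replacement among the $n$ coordinates, a configuration with $m_{\sigma,\tau}$ coordinates of type $(\sigma,\tau)$, where $(m_{\sigma,\tau})$ is the type of the tuple $((\sigma_i,\tau_i))_{i\le k}$. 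Collecting terms yields $\Erw[\fZ_\eps(\vA,\vy)\mid\vm=m]=\sum_{\omega\in\cO_\eps(n)}N(\omega)\,p_m(\omega)$, where $N(\omega)=\binom{n}{(n\omega_{\sigma,\tau})_{\sigma,\tau}}$ is the number of pairs with overlap exactly $\omega$ and $p_m(\omega)$ is the common value of $\pr[\vA x=\vA x'=\vy\mid\vm=m]$ over such pairs.

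The next step is to replace the hypergeometric probability by the product $\prod_{i=1}^k\omega_{\sigma_i,\tau_i}$, which turns the single-row probability into exactly $e^{\Phi(\omega)}$. The error is a correction factor $\prod_{\sigma,\tau}\prod_{l<m_{\sigma,\tau}}(1-l/(n\omega_{\sigma,\tau}))\big/\prod_{l<k}(1-l/n)$; here I would choose $\eps_0=\eps_0(d,k,q)$ small enough that every entry of every $\omega\in\cO_\eps(n)$ exceeds $1/(2q^2)$, so that this correction lies in $[1-c/n,\,1+c/n]$ for a constant $c=c(k,q)$, uniformly over $\cO_\eps(n)$. Thus the single-row probability equals $e^{\Phi(\omega)}(1\pm c/n)$, and since $m\le 2dn/k$, raising to the $m$-th power gives $p_m(\omega)\le e^{m\Phi(\omega)}e^{cm/n}\le e^{m\Phi(\omega)+C\eps}$ after enlarging the constant $C=C(d,k,q)$ (the leftover multiplicative slack is $O_{d,k,q}(1)$ and may be absorbed into the $C\eps$-term, since $\eps$ ranges over the fixed interval $(0,\eps_0)$).

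Finally I would apply Stirling's formula, with its $1/(12n)$ error bounds — again uniform over $\cO_\eps(n)$ because of the lower bound on the entries of $\omega$ — to the factorials in $N(\omega)=n!\big/\prod_{\sigma,\tau}(n\omega_{\sigma,\tau})!$. A short computation using $\sum_{\sigma,\tau}\omega_{\sigma,\tau}=1$ gives $N(\omega)=(2\pi n)^{(1-q^2)/2}\bigl(\prod_{\sigma,\tau}\sqrt{\omega_{\sigma,\tau}}\bigr)^{-1}e^{nH(\omega)}(1+o(1))$, with $o(1)$ uniform in $\omega$. Multiplying this by the bound on $p_m(\omega)$ from the previous step and summing over the at most $n^{q^2}$ distributions $\omega\in\cO_\eps(n)$ produces exactly \eqref{eqClaim_formula}. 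The only point requiring care is the uniformity of all these error estimates over the discretised overlap set $\cO_\eps(n)$, which is precisely why $\eps_0$ must be chosen to keep the admissible overlaps bounded away from the boundary of the simplex; beyond that the argument is routine bookkeeping, and I expect no serious obstacle.
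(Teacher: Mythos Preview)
Your overall scaffolding is correct and matches the paper's: group pairs by overlap, use Stirling on the multinomial, and compute the single-row probability. The gap is in the step where you pass from a $(1\pm c/n)$ correction on the single-row probability to the claimed $e^{C\eps}$ factor after raising to the $m$-th power.

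Your bound on the hypergeometric-to-product correction factor is correct but not sharp enough: it gives $S(\omega)\le e^{\Phi(\omega)}(1+c/n)$ with $c=c(k,q)$ \emph{independent of} $\eps$. Since $m$ can be as large as $2dn/k$, raising to the $m$-th power yields $p_m(\omega)\le e^{m\Phi(\omega)}\cdot e^{2cd/k}$, i.e.\ a multiplicative constant $e^{c'}$ with $c'=c'(d,k,q)$ that does \emph{not} tend to zero with $\eps$. Your sentence ``the leftover multiplicative slack is $O_{d,k,q}(1)$ and may be absorbed into the $C\eps$-term, since $\eps$ ranges over the fixed interval $(0,\eps_0)$'' is where the argument breaks: the claim requires a single $C=C(d,k,q)$ that works simultaneously for every $\eps\in(0,\eps_0)$, and since $\eps$ may be taken arbitrarily small there is no finite $C$ with $C\eps\ge c'$ throughout that range. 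This is not cosmetic: in the downstream application (\Lem~\ref{Lemma_EZ2} and \Prop~\ref{Prop_smm}) one eventually sends $\eps\to 0$ to make the variance $o(1)$ times the squared mean, so an $\eps$-independent constant in the exponent would wreck the Chebyshev step.

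What the paper does differently is to expand the single-row probability one order further,
\[
S(\omega)=\Bigl(1+\tfrac{k(k-1)}{2n}\Bigr)e^{\Phi(\omega)}-T(\omega)+O(n^{-2}),
\]
where $T(\omega)$ is the $O(1/n)$ contribution from type collisions, and then to check by a direct probabilistic computation that $nT(\bar\omega)=\binom{k}{2}e^{\Phi(\bar\omega)}$ exactly. Thus the two $O(1/n)$ terms cancel at $\omega=\bar\omega$, and since both $nT(\omega)$ and $e^{\Phi(\omega)}$ are smooth in $\omega$, the residual correction is $O(\eps/n)$ uniformly over $\cO_\eps(n)$. That extra factor of $\eps$ is precisely what survives the $m$-th power to give $e^{C\eps}$. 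To fix your proof you need this cancellation (or an equivalent argument showing the $1/n$ coefficient vanishes at $\bar\omega$); the crude bound $1\pm c/n$ on each individual correction factor is not enough.
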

\begin{proof}
	For a given $\omega$ the number of $x,x'$ such that $\omega(x,x')=\omega$ is given by the multinomial coefficient
${n\choose{n\omega}}.$
Stirling's formula shows that, uniformly for all $\omega\in\cO_\eps(n)$,
	\begin{align}
	{n\choose{n\omega}}&\leq(2\pi n)^{{(1-q^2)}/{2}}\exp\left\{O(1/n)-n\sum_{\sigma,\tau\in\FF_q}\omega_{\sigma,\tau}\ln\omega_{\sigma,\tau}\right\}\prod_{\sigma,\tau\in\FF_q}\omega_{\sigma,\tau}^{-1/2}
		=(2\pi n)^{{(1-q^2)}/{2}}\frac{\exp\left\{nH(\omega)+O(1/n)\right\}}{\prod_{\sigma,\tau\in\FF_q}\sqrt{\omega_{\sigma,\tau}}}.\label{eqClaim_formula_1}
	\end{align}

Next, fix the overlap $\omega$ and $x,x'$ with overlap $\omega$.
Because the distribution $P$ is permutation invariant and the positions of the non-zero entries are chosen without replacement, the probability that they satisfy any one of the $\vm$ equations of the random linear system equals
	\begin{align}
	S(\omega)&
= \sum_{\substack{a_1,\ldots,a_k,y\in\FF_q}}\sum_{\substack{ 1\leq i_1 < \ldots < i_k \leq n}} \frac{P(a_1,\ldots,a_k)}{q n (n-1) \cdots (n-k+1)}\vecone\cbc{\sum_{\ell=1}^ka_{\ell}x_{i_{\ell}}=\sum_{\ell=1}^ka_{\ell}x'_{i_{\ell}}=y} \nonumber\\
&=		\sum_{\substack{a_1,\ldots,a_k,y}}\sum_{\sigma,\tau\in\FF_q^k} \prod_{i=1}^k\bc{\omega_{\sigma_i,\tau_i}n-\sum_{h<i}\vecone\{(\sigma_h,\tau_h)=(\sigma_i,\tau_i)\}}\frac{P(a_1,\ldots,a_k)}{q n (n-1) \cdots (n-k+1)}
		\vecone\cbc{\sum_{\ell=1}^ka_{\ell}\sigma_{\ell}=\sum_{i=1}^ka_{\ell}\tau_{\ell}=y}
		\nonumber\\
& = \left( 1 + \frac{k(k-1)}{2n}\right)\exp(\Phi(\omega)) -T(\omega)+ O\left(n^{-2}\right),\hspace{6cm}\mbox{where}
	\label{eqClaim_formula_2}\\
T(\omega)&= \sum_{\substack{a_1,\ldots,a_k,y}}\frac{P(a_1,\ldots,a_k)}{qn} \sum_{1\leq h<j\leq k}\sum_{\sigma,\tau\in\FF_q^k}
	\vecone\{(\sigma_h,\tau_h)=(\sigma_j, \tau_j)\}
	\vecone\cbc{\sum_{\ell=1}^ka_{\ell}\sigma_{\ell}=\sum_{i=1}^ka_{\ell}\tau_{\ell}=y}\prod_{\substack{i=1\\ i\not=j}}^k\omega_{\sigma_i,\tau_i}.
		\nonumber
	\end{align}
For the second equation in~(\ref{eqClaim_formula_2}) above, the summation over all choices of $i_1,\ldots, i_k$ is expressed by first fixing the values of $x$ and $x'$ on these coordinates, denoted by $\sigma$ and $\tau$, and then choosing indices $i_1,\ldots, i_k$ matching $\sigma$ and $\tau$. Thus, $\sigma$ and $\tau$ range over $\FF_q^k$, and given $(\sigma, \tau)$, $ \prod_{i=1}^k\bc{\omega_{\sigma_i,\tau_i}n-\sum_{h<i}\vecone\{(\sigma_h,\tau_h)=(\sigma_i,\tau_i)\}}$ is the number of choices for $i_1,\ldots, i_k$ where $x_{i_1,\ldots, i_k}=\sigma$ and $x'_{i_1,\ldots, i_k}=\tau$, given that the overlap of $x$ and $x'$ is $\omega$. The third equation in~(\ref{eqClaim_formula_2}) is obtained by noting that
\begin{align*}
 \prod_{i=1}^k\bc{\omega_{\sigma_i,\tau_i}n-\sum_{h<i}\vecone\{(\sigma_h,\tau_h)=(\sigma_i,\tau_i)\}} &= n^k \prod_{i=1}^k \omega_{\sigma_i,\tau_i} +\sum_{1\leq h<j\leq k}
	\vecone\{(\sigma_h,\tau_h)=(\sigma_j, \tau_j)\}\prod_{\substack{i=1\\ i\not=j}}^k\omega_{\sigma_i,\tau_i}+O(n^{-2})\\
	\frac{1}{n (n-1) \cdots (n-k+1)} &= n^{-k} \left(1+\frac{k(k-1)}{2n}+O(n^{-2})\right). 
\end{align*}

We interpret $T(\omega)$ probabilistically.
Namely, let $\vec a\in\FF_q^{*\,k}$ be a vector drawn from the distribution $P$ and let $\vec\xi\in\FF_q$ be uniformly distributed and independent of $\va$.
Further, let $(\vec\sigma_1,\vec\tau_1), \ldots, (\vec\sigma_k,\vec\tau_k)\in\FF_q\times\FF_q$ be drawn from $\omega$, mutually independently and independently of $\va$ and $\vec\xi$.
Then
\begin{align*}
T(\omega)&= \frac1n \sum_{1\leq h<j\leq k}\pr\brk{(\va_h+\va_j)\begin{pmatrix}\vec\sigma_h \\ \vec\tau_h \end{pmatrix}+ \sum_{\substack{\ell=1 \\ \ell \not=h,j}}^k \vec a_{\ell}\begin{pmatrix}\vec \sigma_{\ell} \\ \vec\tau_{\ell} \end{pmatrix} = \begin{pmatrix} \vec \xi \\ \vec \xi \end{pmatrix}}. 
\end{align*}
From this equation it is immediate that $T(\bar\omega)=\frac1n \bink k2q^{-2}=\frac1n\bink k2\exp(\Phi(\bar\omega))$;
indeed, if we fix $\ell\neq h,j$, then given any $\vec a,\vec\xi$ and any $(\SIGMA_i,\TAU_i)$, $i\neq h,j,\ell$, there is precisely one
pair $(\SIGMA_\ell,\TAU_\ell)$ for which the linear equation is satisfied, and this pair comes up with probability $q^{-2}$.
Since both $T(\omega)$ and $\exp(\Phi(\omega))$ have continuous derivatives, there exists $C'=C'(q,k)>0$ such that 
$|T(\omega)-\frac1n\bink k2\exp(\Phi(\omega))|\leq C'\eps$ for all $\omega\in\cO_\eps(n)$.
Therefore, (\ref{eqClaim_formula_2}) implies that
	\begin{align}\label{eqClaim_formula_3}
	\pr\brk{\vA x=\vA x'=\vy\mid\vm}&\leq\exp\bc{\vm\Phi(\omega)+C\eps}
	\end{align}
for a certain $C=C(d,q,k)>0$.
Combining (\ref{eqClaim_formula_1}) and (\ref{eqClaim_formula_3}) completes the proof.
\end{proof}

\begin{claim}\label{Claim_derivative1}
	The Jacobi matrix of $\Phi$ at the point $\bar\omega$ satisfies $D\Phi(\bar\omega)\zeta=0$ for every vector $\zeta\perp\vecone$.
\end{claim}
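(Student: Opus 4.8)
The plan is to write $\Phi=\ln F$, where
$$F(\omega)=\sum_{a_1,\ldots,a_k,y\in\FF_q}\frac{P(a_1,\ldots,a_k)}q\sum_{\sigma,\tau\in\FF_q^k}\vecone\cbc{\sum_{i=1}^ka_i\sigma_i=\sum_{i=1}^ka_i\tau_i=y}\prod_{i=1}^k\omega_{\sigma_i,\tau_i}$$
is regarded as a function of the $q^2$ real variables $(\omega_{\sigma,\tau})_{\sigma,\tau\in\FF_q}$. Since $\exp(\Phi(\bar\omega))=q^{-2}$ (a computation already carried out in the proof of Claim~\ref{Claim_formula}), we have $F(\bar\omega)=q^{-2}\neq0$, hence $D\Phi(\bar\omega)=q^{2}\,DF(\bar\omega)$, and it suffices to show that the gradient $DF(\bar\omega)$, a vector indexed by $(\sigma,\tau)\in\FF_q\times\FF_q$, is a constant multiple of $\vecone$; any such vector annihilates every $\zeta\perp\vecone$.

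Concretely, I would compute the partial derivative $\partial F/\partial\omega_{\rho,\eta}$ at $\bar\omega$ for each $(\rho,\eta)$ and check that they all coincide. Differentiating a monomial $\prod_{i=1}^k\omega_{\sigma_i,\tau_i}$ gives $\sum_{i=1}^k\vecone\{(\sigma_i,\tau_i)=(\rho,\eta)\}\prod_{j\neq i}\omega_{\sigma_j,\tau_j}$, and at $\omega=\bar\omega$ each surviving product of $k-1$ entries equals $q^{-2(k-1)}$. Writing $a=(a_1,\ldots,a_k)$ and noting that $P(a)=0$ unless $a\in\FF_q^{*k}$, this yields
$$\frac{\partial F}{\partial\omega_{\rho,\eta}}(\bar\omega)=q^{-2(k-1)}\sum_{a_1,\ldots,a_k,y\in\FF_q}\frac{P(a)}q\sum_{i=1}^kN_i(a,y,\rho,\eta),$$
where $N_i(a,y,\rho,\eta)$ counts the pairs $(\sigma,\tau)\in\FF_q^k\times\FF_q^k$ with $(\sigma_i,\tau_i)=(\rho,\eta)$ and $\sum_\ell a_\ell\sigma_\ell=\sum_\ell a_\ell\tau_\ell=y$. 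The heart of the matter is that $N_i(a,y,\rho,\eta)$ does not depend on $(\rho,\eta)$ (nor on $i$, $a$, $y$): once $\sigma_i=\rho$ is fixed, the condition $\sum_\ell a_\ell\sigma_\ell=y$ becomes a single affine-linear equation $\sum_{j\neq i}a_j\sigma_j=y-a_i\rho$ in the $k-1$ unknowns $(\sigma_j)_{j\neq i}$, whose coefficient vector $(a_j)_{j\neq i}$ is non-zero because each $a_j\in\FF_q^*$ and $k-1\geq2$, so it has exactly $q^{k-2}$ solutions; the same count applies to $\tau$, so $N_i=q^{2k-4}$ throughout. Substituting and using $\sum_aP(a)=1$ together with $\sum_{y\in\FF_q}\tfrac1q=1$, we obtain $\partial F/\partial\omega_{\rho,\eta}(\bar\omega)=q^{-2(k-1)}\,k\,q^{2k-4}=kq^{-2}$ for every $(\rho,\eta)$.

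Hence $DF(\bar\omega)=kq^{-2}\vecone$, and therefore $D\Phi(\bar\omega)\zeta=q^{2}\cdot kq^{-2}\sum_{\sigma,\tau\in\FF_q}\zeta_{\sigma,\tau}=k\sum_{\sigma,\tau\in\FF_q}\zeta_{\sigma,\tau}=0$ whenever $\zeta\perp\vecone$, which is the claim. I do not foresee a genuine obstacle here. The only points needing (routine) care are the bookkeeping of the monomial differentiation — in particular the factor $q^{-2(k-1)}$ coming from evaluating the $k-1$ untouched coordinates at $\bar\omega$ — and the elementary count of solutions of one non-trivial $\FF_q$-linear equation; the hypothesis $k\geq3$ is precisely what guarantees that at least one free variable survives after fixing coordinate $i$, so that the count $q^{k-2}$ is genuinely independent of $(\rho,\eta)$.
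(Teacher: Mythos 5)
Your proposal is correct and follows essentially the same route as the paper: both compute the partial derivatives of $\Phi$ at $\bar\omega$ and show that every entry of the gradient equals $k$ (you do this via $D\Phi=DF/F$ with $F(\bar\omega)=q^{-2}$ and a direct count of $q^{k-2}$ solutions to a non-trivial affine equation over $\FF_q$, whereas the paper phrases the identical count probabilistically as ``exactly one completing pair, chosen with probability $q^{-2}$''). The bookkeeping checks out, so the constant gradient annihilates every $\zeta\perp\vecone$ exactly as claimed.
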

\begin{proof}
Let $\vec a\in\FF_q^{*\,k}$ be a vector drawn from the distribution $P$ and let $\vec\xi\in\FF_q$ be uniformly distributed and independent of $\va$.
Also let $(\vec\sigma_1,\vec\tau_1), \ldots, (\vec\sigma_k,\vec\tau_k)\in\FF_q\times\FF_q$ be drawn from $\omega$, mutually independently and independently of $\va$ and $\vec\xi$.
Then for $s,t\in\FF_q$ we have
 \begin{align}\nonumber
	 \frac{\partial \Phi(\omega)}{\partial \omega_{s,t}}&=\frac{1}{\exp\{\Phi(\omega)\}}\brk{\sum_{a_1,\ldots,a_k,y\in\FF_q,\sigma,\tau\in\FF_q^k}
	 	\vecone\cbc{\sum_{i=1}^ka_i\sigma_i=\sum_{i=1}^ka_i\tau_i=y}
	 	\frac{P(a_1,\ldots,a_k)}q\sum_{j=1}^k\vecone\cbc{{\sigma_j\choose \tau_j}={s\choose t}}\prod_{i\ne j}\omega_{\sigma_i,\tau_i}}\\
	 	&=\frac{1}{\exp\{\Phi(\omega)\}}\sum_{j=1}^k\pr\brk{{\va_j\begin{pmatrix}s \\ t \end{pmatrix}+ \sum_{\substack{\ell=1 \\ \ell \not=j}}^k \vec a_{\ell}\begin{pmatrix}\vec \sigma_{\ell} \\ \vec\tau_{\ell} \end{pmatrix} = \begin{pmatrix} \vec\xi \\ \vec\xi \end{pmatrix}}} .
			\label{eqClaim_derivative1_1}
 \end{align}
At the point $\omega=\bar\omega$ each of these summands equals $q^{-2}$ because if we fix $\ell\neq j$, then
for any given $\va$, $\vec\xi$ and any given $(\SIGMA_i,\TAU_i)_{i\neq\ell}$ precisely one pair $(\SIGMA_\ell,\TAU_\ell)$ will satisfy the equation, and this pair is chosen with probability $q^{-2}$.
Hence, all entries of $D\Phi(\bar\omega)$ are equal to $k$.
\end{proof}

\begin{claim}\label{Claim_derivative2}
	The Hessian of $\Phi$ at the point $\bar\omega$ satisfies $\scal{D^2\Phi(\bar\omega)\zeta}\zeta=0$ for every vector $\zeta\perp\vecone$.
\end{claim}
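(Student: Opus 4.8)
The plan is to pass from $\Phi$ to $\psi:=\exp\circ\,\Phi$, which as a function of the entries of $\omega$ is a polynomial that is homogeneous of degree $k$, and then to exhibit the quadratic form $\scal{D^2\psi(\bar\omega)\zeta}{\zeta}$ as an expectation that collapses, after two short averaging steps, to a multiple of $\scal{\vecone}{\zeta}^2=0$.

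First I would record the reduction. Since $\Phi=\ln\psi$, the chain rule gives
	\begin{align*}
	\scal{D^2\Phi(\bar\omega)\zeta}{\zeta}&=\frac{\scal{D^2\psi(\bar\omega)\zeta}{\zeta}}{\psi(\bar\omega)}-\frac{\scal{D\psi(\bar\omega)}{\zeta}^2}{\psi(\bar\omega)^2}.
	\end{align*}
By Claim~\ref{Claim_derivative1} all entries of $D\Phi(\bar\omega)$ equal $k$, hence $D\psi(\bar\omega)=k\,\psi(\bar\omega)\,\vecone$ and $\scal{D\psi(\bar\omega)}{\zeta}=k\,\psi(\bar\omega)\scal{\vecone}{\zeta}=0$ because $\zeta\perp\vecone$; moreover $\psi(\bar\omega)=q^{-2}\neq0$. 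So it suffices to prove $\scal{D^2\psi(\bar\omega)\zeta}{\zeta}=0$.

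Next, differentiating the explicit polynomial expression for $\psi$ twice, exactly as in the proof of Claim~\ref{Claim_derivative1}, yields for $\va\in\FF_q^{*\,k}$ drawn from $P$, for $\vec\xi\in\FF_q$ uniform and for $(\SIGMA_i,\TAU_i)_{i\in[k]}$ drawn from $\omega$, all mutually independent,
	\begin{align*}
	\frac{\partial^2\psi(\omega)}{\partial\omega_{s,t}\partial\omega_{u,v}}&=\sum_{\substack{j,j'\in[k]\\j\neq j'}}\pr\brk{\va_j\binom{s}{t}+\va_{j'}\binom{u}{v}+\sum_{i\neq j,j'}\va_i\binom{\SIGMA_i}{\TAU_i}=\binom{\vec\xi}{\vec\xi}}.
	\end{align*}
Writing $W=\sum_{i\neq j,j'}\va_i\binom{\SIGMA_i}{\TAU_i}=\binom{W_1}{W_2}$ and evaluating at $\bar\omega$ (so the $(\SIGMA_i,\TAU_i)$ are uniform on $\FF_q^2$), I would evaluate each summand with $\va$ held fixed in two steps. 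Averaging over $\vec\xi$ only: the two coordinates of the vector equation read $\va_j s+\va_{j'}u+W_1=\vec\xi$ and $\va_j t+\va_{j'}v+W_2=\vec\xi$, so summing out $\vec\xi$ leaves $q^{-1}\vecone\{\va_j(s-t)+\va_{j'}(u-v)=W_2-W_1\}$. Averaging over the remaining $(\SIGMA_i,\TAU_i)$, $i\neq j,j'$: since $k\geq3$ the set $[k]\setminus\{j,j'\}$ is nonempty, so picking $i_0$ in it and using that $\TAU_{i_0}-\SIGMA_{i_0}$ is uniform on $\FF_q$ while $\va_{i_0}\neq0$, the difference $W_2-W_1=\sum_{i\neq j,j'}\va_i(\TAU_i-\SIGMA_i)$ is uniformly distributed on $\FF_q$. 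Hence for every fixed $(s,t,u,v)$ the event $\{W_2-W_1=\va_j(s-t)+\va_{j'}(u-v)\}$ has probability $q^{-1}$, so the $(j,j')$-summand of $\scal{D^2\psi(\bar\omega)\zeta}{\zeta}$ equals
	\begin{align*}
	q^{-2}\sum_{s,t,u,v\in\FF_q}\zeta_{s,t}\zeta_{u,v}=q^{-2}\Bigl(\sum_{s,t\in\FF_q}\zeta_{s,t}\Bigr)^2=q^{-2}\scal{\vecone}{\zeta}^2=0.
	\end{align*}
As this holds for every realisation of $\va$ and every ordered pair $j\neq j'$, we obtain $\scal{D^2\psi(\bar\omega)\zeta}{\zeta}=0$, and the claim follows from the reduction above.

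The only genuine obstacle is bookkeeping: getting the probabilistic rewriting of $\partial^2\psi/\partial\omega_{s,t}\partial\omega_{u,v}$ exactly right (ordered pairs $j\neq j'$, the splitting into two scalar equations, and the role of the uniform auxiliary variable $\vec\xi$). Everything after that is a two-line averaging argument, and the hypothesis $k\geq3$ is used at precisely one point, namely to secure the extra coordinate $i_0$ that randomises $W_2-W_1$ — which is also why the statement genuinely fails for $k=2$.
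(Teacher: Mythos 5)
Your proof is correct and follows essentially the same route as the paper: pass to $\varphi=\exp\circ\,\Phi$, show that every second partial derivative of $\varphi$ at $\bar\omega$ equals the same constant (so that $\scal{D^2\varphi(\bar\omega)\zeta}\zeta$ is a multiple of $\scal{\vecone}{\zeta}^2=0$), and kill the first-derivative correction from the chain rule using Claim~\ref{Claim_derivative1} and $\zeta\perp\vecone$. The only cosmetic difference is how the probability $q^{-2}$ is extracted — you average over $\vec\xi$ and then over $W_2-W_1$ via a free coordinate $i_0$, while the paper conditions on everything except one pair $(\SIGMA_\ell,\TAU_\ell)$ — and both uses of $k\geq3$ occur at the same point.
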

\begin{proof}
Fix $s,s',t,t'\in[q]$ and define $\varphi(\omega)=\exp\{\Phi(\omega)\}$. Then
	\begin{align}\label{eqClaim_derivative2_1}
	{ \frac{\partial^2 \Phi(\omega)}{\partial \omega_{s,t}\partial \omega_{s',t'}}
	 =\frac{ 1}{\varphi(\omega)} \frac{\partial^2 \varphi(\omega)}{\partial \omega_{s,t}\partial \omega_{s',t'}}- \frac{ 1}{\varphi(\omega)^2}\frac{\partial \varphi(\omega)}{\partial \omega_{s,t}}\cdot  \frac{\partial \varphi(\omega)}{\partial \omega_{s',t'}}.}
	\end{align}
The second partial derivatives work out to be
	\begin{align*}
	 \frac{\partial^2 \varphi(\omega)}{\partial \omega_{s,t}\partial \omega_{s',t'}}=\sum_{\substack{a_1,\ldots,a_k,y\in\FF_q\\\sigma,\tau\in\FF_q^k}}
	 	\vecone\cbc{\sum_{i=1}^ka_i\sigma_i=\sum_{i=1}^ka_i\tau_i=y}
	 	\frac{P(a_1,\ldots,a_k)}q\sum_{j\ne j'}\vecone\cbc{{\sigma_j\choose \tau_j}={s\choose t},{\sigma_{j'} \choose \tau_{j'}}={s'\choose t'}}\prod_{i\ne j,j'}\omega_{\sigma_i,\tau_i}.
	\end{align*}
As in the case of the Jacobi matrix, at $\omega=\bar\omega$ we obtain
\begin{align}	\label{eqClaim_derivative2_2}
 \frac{\partial^2 \varphi(\bar\omega)}{\partial \omega_{s,t}\partial \omega_{s',t'}}=	\sum_{1\leq j<h\leq k}\pr\brk{{\va_j \begin{pmatrix} s \\ t \end{pmatrix}+ \va_h \begin{pmatrix} s'\\ t' \end{pmatrix}+ \sum_{\substack{\ell=1 \\ \ell \not=j,h}}^k \vec a_{\ell}\begin{pmatrix}\vec \sigma_{\ell} \\ \vec\tau_{\ell} \end{pmatrix} = {\begin{pmatrix} \vec \xi \\ \vec \xi \end{pmatrix}}}}  = \frac{k(k-1)}{q^2}.
 \end{align}
The assertion follows from (\ref{eqClaim_derivative2_1}), (\ref{eqClaim_derivative2_2}) and Claim~\ref{Claim_derivative1}.
\end{proof}

\begin{proof}[Proof of \Lem~\ref{Lemma_EZ2}]
This is now a standard application of the Laplace method.
Since
	\begin{align*}
	\frac{\partial H}{\partial \omega_{s,t}}&=-(1+\ln \omega_{s,t}),&
		\frac{\partial^2 H}{\partial \omega_{s,t}\partial\omega_{\sigma,\tau}}&=
				-\frac{\vecone\{(s,t)=(\sigma,\tau)\}}{\omega_{s,t}}\qquad\mbox{for all }s,t,\sigma,\tau\in\FF_q,
		\end{align*}
the Jacobi and the Hesse matrix of the entropy {function} satisfy
	$DH(\bar\omega)\zeta=0$, $\scal{D^2H(\bar\omega)\zeta}\zeta=-q^2\|\zeta\|_2^2$ for all $\zeta\perp\vecone$.
Since the third derivatives of $H(\omega)$, $\Phi(\omega)$  are uniformly bounded near $\bar\omega$,  
Claims~\ref{Claim_derivative1} and~\ref{Claim_derivative2} and Taylor's formula yield for small $\eps>0$,
		\begin{align*}
			nH(\omega)+\vm\Phi(\omega) &= nH(\bar{\omega})+\vm\Phi(\bar{\omega})-\frac12nq^2\|\omega-\bar\omega\|_2^2+ 
				nO\left({\|\omega-\bar\omega\|_2^3}\right)&\mbox{uniformly for all }\omega\in\cO_\eps(n).
		\end{align*}
Further, by Claim \ref{Claim_formula} there exist $\eps_0=\eps_0(d,k,q)>0$ and $C=C(d,k,q)>0$ such that for all $0<\eps<\eps_0(d,k,q)$ we have 
		\begin{align}\label{eqLaplace1}
		\Erw[\fZ_\eps(\vA,\vy)|\vm]&\leq(2\pi n)^{\frac{1-q^2}2}\exp\{n H(\bar{\omega})+\vm\Phi(\bar{\omega})+C\eps\}\cdot S,&\mbox{where}\\
		S&=\sum_{\omega\in\cO_\eps(n)}
			\exp\left\{-\frac{nq^2}{2}\sum_{s,t\in\FF_q}\left(\omega_{s,t}-q^{-2}\right)^2 + {nO\left(\|\omega-q^{-2}\vecone\|_2^3\right)}\right\}
				\prod_{s,t\in\FF_q}\omega_{s,t}^{-1/2}.\nonumber
		\end{align}
Approximating the sum by an integral, introducing $\vec z=(z_{\sigma,\tau})_{(\sigma,\tau)\in\FF_q^2\setminus\{(0,0)\}}$ and setting $z_{0,0}=1-\sum_{(\sigma,\tau)\neq(0,0)}z_{\sigma,\tau}$, we obtain
	\begin{align}\label{eqLaplace2}
	S&=(1+o(1))q^{q^2}n^{q^2-1}\int_{\mathbb{R}^{q^2-1}}\exp\left\{-\frac{nq^2}{2}\sum_{s,t\in\FF_q}\left(z_{s,t}-q^{-2}\right)^2\right\}\dd\vec z
		=(1+o(1))qn^{\frac{q^2-1}2}\int_{\mathbb{R}^{q^2-1}}\exp\left\{-\frac{1}{2}\sum_{s,t\in\FF_q}z_{s,t}^2\right\}\dd\vec z,
	\end{align}
	where the error $nO(\|\omega-q^{-2}\vecone\|_2^3)$ below~(\ref{eqLaplace1}) is absorbed by $o(1)$ in the integration, $q^{q^2}$ in the first equation arises from $\prod_{s,t\in\FF_q}\omega_{s,t}^{-1/2}\sim q^{q^2}$, $n^{q^2-1}$ arises from the scaling when approximating the summation by integration, and the second equation follows by the scaling and translation: $q\sqrt{n}(z_{s,t}-q^{-2})\mapsto z_{s,t}$.
Thinking of $\vec z=(z_{\sigma,\tau})_{(\sigma,\tau)\neq(0,0)}$ as a $(q^2-1)$-dimensional vector and letting 
	$\cM=(\cM_{(\sigma,\tau),(\sigma',\tau')})_{(\sigma,\tau),(\sigma',\tau')\neq(0,0)}$ be the $(q^2-1)\times(q^2-1)$-matrix with entries
	$\cM_{(\sigma,\tau),(\sigma',\tau')}=1+\vecone\{(\sigma,\tau)=(\sigma',\tau')\}$, we see that
	$\sum_{s,t\in\FF_q}z_{st}^2=\scal{\cM\vec z}{\vec z}.$
The eigenvalues of $\cM$ are $1$ and $q^2$, the latter with multiplicity one.
Hence, (\ref{eqLaplace2}) yields
	\begin{align}\label{eqLaplace3}
	S&=
		(1+o(1))qn^{\frac{q^2-1}2}\int_{\mathbb{R}^{q^2-1}}\exp\left\{-\frac{1}{2}\scal{\cM\vec z}{\vec z}\right\}\dd\vec z
		=(1+o(1))\frac{q}{\sqrt{\det\cM}}\bc{2\pi n}^{\frac{q^2-1}2}\sim\bc{2\pi n}^{\frac{q^2-1}2}.
	\end{align}
Finally, since $H(\bar{\omega})=2\ln q$ and $\Phi(\bar{\omega})=-2\ln q$, the assertion follows from (\ref{eqLaplace1}) and (\ref{eqLaplace3}).
\end{proof}

\begin{proof}[Proof of \Prop~\ref{Prop_smm}]
	Combining \Lem s~\ref{Lemma_EZ} and~\ref{Lemma_EZ2}, we see that for every small enough $\eps>0$,
	\begin{align*}
	\lim_{n\to\infty}\pr\brk{\Var[\cZ_\eps(\vA,\vy)|\vm]\leq 5\eps^{1/4} q^{n-\vm}}&=1.
	\end{align*}
	Hence, Chebyshev's inequality yields
	\begin{align*}
	\lim_{n\to\infty}\pr\brk{Z(\vA,\vy)=0}&\leq
	\lim_{n\to\infty}\pr\brk{Z(\vA,\vy)<(1-\eps^{1/16})q^{n-\vm}}\leq
	\lim_{n\to\infty}\Erw\brk{\pr[\cZ_{\eps}(\vA,\vy)<(1-\eps^{1/16})q^{n-\vm}\mid\vm]}
	< 5\eps^{1/8}.
	\end{align*}
	Taking $\eps\to0$ shows $\lim_{n\to\infty}\pr\brk{\vy\in\Im(\vA)}=1$, whence $\lim_{n\to\infty}\pr\brk{\rk(\vA)=\vm}=1$.
\end{proof}

\bigskip\noindent{\bf Acknowledgment.}
We thank Charilaos Efthymiou, Alan Frieze, Mike Molloy, Wesley Pegden and Claudius Zibrowius for helpful discussions.

\begin{appendix}

\section{Proof of \Thm~\ref{Cor_freezing}}\label{Sec_freezing}

\noindent
The proof is based on combining \Thm~\ref{Thm_SAT} with the argument from~\cite{AchlioptasMolloy}, where
\Thm~\ref{Cor_freezing} was established in the special case $q=2$.
As the solutions of $\vA x = \vy$ are translates to the homogeneous system, it suffices to prove \Thm~\ref{Cor_freezing} given that $\vy=0$. We will study the geometry of the set of solutions by way of the $2$-core of the random hypergraph $G(\vA)$. Its importance is due to the following observation. 
 If a variable $x_i$ is contained in exactly one equation of the linear system, then given the values of the other variables in that equation we can always set $x_i$ such that the equation is satisfied.
In other words, the value of $x_i$ can be expressed as a linear combination of other variables in the equation.
Moreover, if $x_i$ is not contained in any equation, then it can be assigned any value.
In either case we can remove $x_i$ and the equation, if there is one, containing it, and
any solution of the reduced system can be extended to a set of solutions to the original system.
Repeating this reduction leaves us with a system $\vA_*x_*=0$ where every variable is contained in at least two equations.
The set of remaining variables {is precisely} the vertex set of the $2$-core of the hypergraph $G(\vA)$, and the remaining equations correspond precisely to the hyperedges of the $2$-core.

The process of reducing the system $\vA x=0$ to its $2$-core is called the {\em peeling process}.
Since the intermediate stages of the peeling process will be important to us, we need to be a bit more formal.
Suppose that $\pi$ is a permutation of the set $[n]$.
Then the peeling process proceeds as follows.
Starting from the entire linear system $\vA x=0$, at each time we check if there is a variable $x_i$ that occurs in no more than one equation.
If so, we choose variable with the index $i$ that has the smallest value $\pi(i)$ and remove this variable $x_i$ along with the equation in which it occurs (if any).

The peeling process gives rise to a directed graph $D(\vA,\pi)$, constructed inductively as follows.
Initially, $D(\vA,\pi)$ is an empty graph with vertex set $\{x_1,\ldots,x_n\}$.
When the peeling process deletes variable $x_i$ we add to $D(\vA,\pi)$ a directed edge $(x_j,x_i)$ for every variable $x_j$ that occurs in the equation containing $x_i$ at the time of its removal (if there is such an equation).
Further, for each variable $x_i$ we let $R(\vA,\pi,x_i)$ be the set of all variables $x_j$ that can be reached from $x_i$ via a directed path in $D(\vA,\pi)$.
Further, following~\cite{AchlioptasMolloy}, we define
a {\em flippable cycle} of $\vA$ as a set of variables $X=\{x_{i_1},\ldots,x_{i_t}\}$ such that the set of hyperedges incident with $X$ in $G(\vA)$ can be ordered as $e_1,\ldots, e_t$ such that each $x_{i_h}$ lies in $e_h$ and $e_{h+1}$ (addition mod t) but not in any other hyperedges of $G(\vA)$. A {\em core flippable cycle} is a flippable cycle whose vertices all belong to the $2$-core of $G(\vA)$.
The following lemma paraphrases~\cite[\Thm~5, \Lem~45 and \Prop~48]{AchlioptasMolloy}.

\begin{lemma}\label{Thm_DAMSOM}
Suppose that $k\geq3$ and $0<d\neq d_k^\star$.
Then \whp\ for the random linear system $\vA x=0$ there is a permutation $\pi$ of $[n]$ such that the following is true.
\begin{enumerate}[(i)]
\item There are no more than $\sqrt{\ln\ln n}$ variables that belong to core flippable cycles.
\item Any two solutions $x_*,x_*'$ of the reduced system $\vA_*x=0$ either agree on all variables that do not belong to core flippable cycles or disagree on $\Omega(n)$ variables.
\item {For all} $i\in[n]$ we have $|R(\vA,\pi,x_i)|\leq(\ln\ln n)\ln n$.
\item {For all} core flippable cycles $C$ we have
		$\sum_{x_i\in C}|R(\vA,\pi,x_i)|\leq\sqrt{\ln\ln n}\ln n.$
\end{enumerate}
\end{lemma}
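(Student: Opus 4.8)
The plan is to reduce Lemma~\ref{Thm_DAMSOM} to the corresponding statements in~\cite{AchlioptasMolloy}, exploiting the observation that three of the four assertions concern only the random hypergraph $G(\vA)$ and the peeling dynamics on it, neither of which depends on the field. Indeed, the distribution of $G(\vA)$ --- a $k$-uniform hypergraph on $\{x_1,\ldots,x_n\}$ with $\vm\sim\Po(dn/k)$ edges whose $k$-subsets are drawn uniformly at random --- is precisely the one underlying the random $k$-XORSAT instances analysed in~\cite{AchlioptasMolloy} (up to the harmless difference between a Poisson and a deterministic number of edges, which one bridges by contiguity). The $2$-core of $G(\vA)$ coincides with $G(\vA_*)$; the notions of flippable cycle and core flippable cycle are hypergraph notions; and the directed graph $D(\vA,\pi)$ together with the reachability sets $R(\vA,\pi,x_i)$ are defined solely in terms of the supports of the rows. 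Hence assertions (i), (iii) and (iv) follow from~\cite[\Thm~5 and \Prop~48]{AchlioptasMolloy} once the (field-independent) permutation $\pi$ produced there is fixed; the hypothesis $d\neq d_k^\star$ is inherited from~\cite{AchlioptasMolloy} and is exactly what guarantees that the $2$-core is either empty or linearly large and enjoys the requisite expansion.

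The only assertion that formally refers to the field is (ii), and here a short linear-algebraic observation closes the gap. Let $x_*,x_*'$ be solutions of $\vA_*x=0$ over $\FF_q$, put $\vec d=x_*-x_*'\in\ker(\vA_*)$ and let $X=\{i:\vec d_i\neq0\}$ be its support. I claim every edge $e$ of $G(\vA_*)$ incident with $X$ contains at least two vertices of $X$: if $e$ encoded the equation $\sum_j a_j x_j=0$ and met $X$ in the single vertex $x_i$, then evaluating that equation on $\vec d$ would give $a_i\vec d_i=0$ with $a_i\neq0$, contradicting $i\in X$. Thus $X$ is a subset of the $2$-core every incident edge of which has at least two endpoints inside it. The structural part of~\cite{AchlioptasMolloy} --- the expansion property of the $2$-core underlying~\cite[\Lem~45]{AchlioptasMolloy} --- asserts that any such set is either of size $\Omega(n)$ or contained in a union of core flippable cycles. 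In the latter case $\vec d$ vanishes on every variable outside the core flippable cycles, i.e.\ $x_*$ and $x_*'$ agree there; in the former case they disagree on $\Omega(n)$ coordinates. This is exactly (ii), and note that the argument does not require any flippable cycle to actually carry a non-trivial kernel vector over $\FF_q$ --- the dichotomy holds regardless.

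The main obstacle is therefore not mathematical depth but careful bookkeeping: one must go through~\cite{AchlioptasMolloy} and check that each combinatorial statement invoked is genuinely a statement about the support hypergraph $G(\vA)$ and its $2$-core rather than about $\FF_2$-arithmetic, and that our definitions of the peeling process, of $D(\vA,\pi)$, of $R(\vA,\pi,\nix)$ and of (core) flippable cycles coincide with theirs. A couple of minor model discrepancies --- Poisson versus deterministic edge count, and the possibility of repeated hyperedges when two rows happen to share a support --- are absorbed by standard contiguity arguments and affect none of the four conclusions. With these matches in place, (i), (iii) and (iv) are immediate citations and (ii) follows from the elementary support argument above.
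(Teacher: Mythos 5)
Your proposal takes essentially the same route as the paper: Lemma~\ref{Thm_DAMSOM} is presented there as a direct paraphrase of \cite[\Thm~5, \Lem~45 and \Prop~48]{AchlioptasMolloy}, justified by the observation that the statements invoked concern only the hypergraph $G(\vA)$, its $2$-core and the peeling process, none of which depends on the field. Your explicit support argument for (ii) --- that the support of a kernel vector of $\vA_*$ meets every incident core edge in at least two vertices, so the flippable-set dichotomy of \cite{AchlioptasMolloy} applies verbatim over $\FF_q$ --- correctly makes precise the one point where field-independence is not entirely obvious and which the paper leaves implicit.
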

Let $\cF(\vA,\pi)$ be the set of all variables $x_i$ that have in-degree $0$ in {$D(\vA, \pi)$.}

\begin{lemma}\label{Fact_extend}
\begin{enumerate}
\item Any solution to the reduced system $\vA_*x_*=0$ extends to a solution to the entire system $\vA x=0$.
\item If $x_*,x_*'$ are solutions to the reduced system that differ only on a variable set $U\subset\cF(\vA,\pi)$,  then they extend to solutions $x,x'$ that differ only on the set $\bigcup_{x_i\in U}R(\vA,\pi,x_i)$.
\item For any two solutions $x,x'$ that coincide on the variables in the $2$-core there exists a sequence $x^{(0)},\ldots,x^{(\ell)}$ of solutions with $x^{(0)}=x$, $x^{(\ell)}=x'$ such that for every $l<\ell$ the Hamming distance of $x^{(l)}$ and $x^{(l+1)}$ is upper-bounded by $\max_{i\in[n]}|R(\vA,\pi,x_i)|$.
\end{enumerate}
\end{lemma}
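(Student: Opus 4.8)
The plan is to read off from the peeling process, for the fixed permutation $\pi$, an explicit \emph{canonical extension} that turns a solution of the reduced system $\vA_*x_*=0$ into a solution of $\vA x=0$, and then to argue that this extension propagates a local change at a source variable $x_i$ exactly along the directed edges of $D(\vA,\pi)$, so that only the variables in $R(\vA,\pi,x_i)$ are affected. Two structural observations about the peeling underpin everything. Fix $\pi$ and let $x_{j_1},x_{j_2},\ldots$ be the variables in the order they are deleted. First, when $x_{j_s}$ is deleted together with an equation $e$, all the other variables of $e$ are still present (otherwise $e$ would already have been removed), hence are deleted after $x_{j_s}$ or lie in the $2$-core; in particular the edges added to $D(\vA,\pi)$ at this step all point \emph{to} $x_{j_s}$ from variables that are restored earlier than $x_{j_s}$ in the reverse deletion order. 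Second, since in-edges of a vertex are created only when that vertex is deleted, a variable has in-degree $0$ iff it is a $2$-core variable or is deleted while lying in no equation (a ``free'' variable); thus $\cF(\vA,\pi)$ is precisely this set of ``source'' variables, and no directed path of $D(\vA,\pi)$ ends at such a vertex.

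For part (1), given a solution $x_*$ of the reduced system I would restore the deleted variables in reverse deletion order: a free variable is set to $0$, while a variable $x_{j_s}$ deleted with an equation $\sum_{x_h\in e}a_hx_h=0$ is set to $x_{j_s}=-a_{j_s}^{-1}\sum_{x_h\in e\setminus\{x_{j_s}\}}a_hx_h$. By the first observation the right-hand side involves only variables that were assigned earlier, so the recursion is well defined; each peeled equation is satisfied by construction, and the reduced equations hold because $x_*$ is a solution and the $2$-core variables are never touched. Hence the result lies in $\cS(\vA,0)$. Running the same computation on a \emph{given} solution $x$ shows that $x$ equals its own canonical extension from its restriction to the $2$-core together with its own values on the free variables, because the equation from which a variable $x_{j_s}$ was peeled forces $x_{j_s}$ in terms of the other variables of that equation; I will reuse this in part (3).

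Parts (2) and (3) both rest on the claim that, in the canonical extension, changing the value of a single source variable $x_i\in\cF(\vA,\pi)$ can alter the value of a variable $x_\ell$ only if $x_\ell\in R(\vA,\pi,x_i)$. This is an induction along the restoration order: if $x_\ell\notin\bigcup_{x_i\in U}R(\vA,\pi,x_i)$ for a set $U$ of source variables, then no variable of the equation $x_\ell$ was peeled from lies in this union (following the corresponding edge to $x_\ell$ would otherwise put $x_\ell$ there too), so by induction the two canonical extensions agree at those variables and hence at $x_\ell$; free variables and $2$-core variables outside $U$ are handled directly. For part (2) I would take the canonical extensions $x,x'$ of $x_*,x_*'$ with free variables set to $0$ in both: they agree on all free variables, on the $2$-core outside $U\subseteq\bigcup_{x_i\in U}R(\vA,\pi,x_i)$, and by the claim on every other variable outside $\bigcup_{x_i\in U}R(\vA,\pi,x_i)$. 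For part (3), two solutions $x,x'$ coinciding on the $2$-core are, by the last remark of part (1), the canonical extensions of the same core assignment differing only in their values on some free variables $x_{i_1},\ldots,x_{i_t}$; letting $x^{(s)}$ be the canonical extension that uses the $x'$-values on $x_{i_1},\ldots,x_{i_s}$ and the $x$-values on the remaining free variables gives $x^{(0)}=x$, $x^{(t)}=x'$, and by the claim $x^{(s-1)}$ and $x^{(s)}$ differ only within $R(\vA,\pi,x_{i_s})$, whence at Hamming distance at most $\max_{i\in[n]}|R(\vA,\pi,x_i)|$.

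The argument is essentially combinatorial bookkeeping, and the one point that needs care is the first structural observation — that an equation is deleted only while all of its remaining variables are still present — since the well-definedness of the canonical extension, the identification of $\cF(\vA,\pi)$, and the ``influence flows along directed edges'' claim all depend on it.
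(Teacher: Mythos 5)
Your proposal is correct and follows essentially the same route as the paper: restore the peeled variables in reverse deletion order to extend a core solution, run the extension in parallel for (2), and walk between solutions by altering one source variable's influence set $R(\vA,\pi,x_i)$ at a time for (3). The only cosmetic difference is in (3), where the paper inducts on the largest peeling index at which the two solutions disagree rather than interpolating over the free variables of a common canonical extension, but the underlying local-modification idea is identical.
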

\begin{proof}
The extension for (1) is obtained by simply assigning the variables outside the $2$-core in the inverse order of their removal by the peeling process.
For (2) we run this extension process from $x$ and $x'$ in parallel.
Then we can assign all the non-core variables that do not belong to the set $\bigcup_{x_i\in U}R(\vA,\pi,x_i)$ of variables impacted by $U$ the same.
A similar argument yields (3).
Indeed, reorder the variables such that $x_1,\ldots,x_r$ are the variables outside the $2$-core, removed by the peeling process in this order.
Let $i\leq r$ be the largest index such that $x_i\neq x_i'$.
Altering only the values that $x'$ assigns to the variables in $\in R(\vA,\pi,x_i)$, we can obtain a solution $x''$ such that $x_h''=x_h$ for all $h\geq i$ and such that $x_h''=x_h'$ for all $h\in[i-1]\setminus R(\vA,\pi,x_i)$.
In particular, $x',x''$ have Hamming distance at most $|R(\vA,\pi,x_i)|$ and the largest variable index on which $x,x''$ differ is less than the largest index where $x,x'$ differ.
Proceeding inductively, we obtain the desired sequence.
\end{proof}

\begin{proof}[Proof of \Thm~\ref{Cor_freezing}]
If $d<d_k^\star$, then by \Thm~\ref{Thm_Mike}  the $2$-core of $G(\vA)$ is empty \whp\
Therefore, \Lem~\ref{Fact_extend} (3) and \Lem~\ref{Thm_DAMSOM} show that there is just one cluster \whp. This proves part (i).

Now assume that $d_k^\star<d<d_k$.
Consider a solution $x_*$ of the reduced system $\vA_*x=0$.
Then \Lem~\ref{Fact_extend} (1) shows that $x_*$ extends to a solution $x$ of the entire system.
For two solutions $x_1,x_2$ of $\vA_*x=0$, we write $x_1\approx x_2$ if they agree on all variables in the 2-core but possibly some variables belonging to flippable cycles. The relation $\approx$ is reflexive, symmetric and transitive, and thus provides a partition $\cC$ of the solutions of $\vA_*x=0$. Let $C\in\cC$  be a part in the partition. Let $S(C)$ denote the set of solutions to $\vA x=0$ extended from any $x\in C$. By \Lem~\ref{Thm_DAMSOM} and \Lem~\ref{Fact_extend}, $\Sigma(\vA,0)=\{S(C):\ C\in\cC\}$ forms the set of solution clusters of $\vA x=0$.

Finally, 
since the matrix $\vA$ has full rank \whp\ by \Thm~\ref{Thm_SAT}, so does the reduced matrix $\vA_*$.
Therefore, \Thm~\ref{Thm_Mike} shows that \whp\ $\vA_*$ has nullity
	\begin{equation}
	{n_*(\vA)-m_*(\vA)}=n(\rho_{k,d}-d\rho_{k,d}^{k-1}+d(1-1/k)\rho_{k,d}^k+o(1)).
	\end{equation}
Furthermore, by  \Lem~\ref{Thm_DAMSOM}, $\log_q |C|=O(\sqrt{\ln\ln n} \ln n)$ \whp, and thus 
\begin{equation}\label{eq_Cor_freezing_1}
\log_q |\cC| = n(\rho_{k,d}-d\rho_{k,d}^{k-1}+d(1-1/k)\rho_{k,d}^k+o(1)) - O(\sqrt{\ln\ln n} \ln n).
\end{equation} 
Hence, (\ref{eqCor_freezing}) follows from (\ref{eq_Cor_freezing_1}) and the fact that
$\Sigma(\vA,\vy)=\Sigma(\vA,0)=|\cC|$.

\end{proof}

\end{appendix}

\end{document}